\newcounter{shared}
\newtheorem{conjecture}[shared]{Conjecture}
\newtheorem{theorem}{Theorem}[section]
\newtheorem*{thm-non}{Theorem}
\newtheorem{prop}[theorem]{Proposition}
\newtheorem{lemma}[theorem]{Lemma}
\newtheorem{cor}[theorem]{Corollary}
\newtheorem{claim}[theorem]{Claim}
\theoremstyle{definition}
\newtheorem{defn}[theorem]{Definition}
\newtheorem*{defn-non}{Definition}
\newtheorem{ques}[shared]{Question}
\newlist{Case}{enumerate}{2}
\setlist[Case, 1]{%
    label           =   {\bfseries Case \arabic*.},
    labelindent=1em ,labelwidth=1.3cm, labelsep*=1em, leftmargin =!
}
\setlist[Case, 2]{%
    label           =   {\bfseries Subcase \arabic{Casei}.\arabic*.},
    labelindent=-1em ,labelwidth=1.3cm, labelsep*=1em, leftmargin =!
}
\newenvironment{poc}{\begin{proof}[Proof of claim]}{\end{proof}}
\newcommand{\C}[1]{{\protect\mathcal{#1}}}
\newcommand{\I}[1]{{\mathbbm #1}}
\newcommand{\ceil}[1]{\lceil #1\rceil}
\newcommand{\floor}[1]{\lfloor #1\rfloor}
\newcommand{\eps}{\varepsilon}
\newcommand{\ex}{\mathrm{ex}}
\title{Clique density vs blowups}
\author{
Domagoj Brada\v{c}\thanks{Department of Mathematics, ETH, Z\"{u}rich, Switzerland. Research supported in part by SNSF grant 200021-228014. Email: domagoj.bradac@math.ethz}
\and
Hong Liu\thanks{Extremal Combinatorics and Probability Group (ECOPRO), Institute for Basic Science (IBS), Daejeon, South Korea. Emails: {\texttt \{hongliu, zixiangxu\}@ibs.re.kr}. Supported by IBS-R029-C4.}
\and
Zhuo Wu\thanks{Mathematics Institute and DIMAP, University of Warwick, Coventry, UK, and Extremal Combinatorics and Probability Group
(ECOPRO), Institute for Basic Science (IBS), Daejeon, South Korea. Supported by the Warwick Mathematics Institute Centre for Doctoral Training and funding from University of Warwick and the Institute for Basic Science (IBS-R029-C4). Email: zhuo.wu@warwick.ac.uk}
\and
Zixiang Xu\footnotemark[2]
}
\begin{document}
\maketitle

\begin{abstract}
    A well-known theorem of Nikiforov asserts that any graph with a positive $K_{r}$-density contains a logarithmic blowup of $K_r$. In this paper, we explore variants of Nikiforov's result in the following form. Given $r,t\in\mathbb{N}$, when a positive $K_{r}$-density implies the existence of a significantly larger (with almost linear size) blowup of $K_t$? Our results include:
\begin{itemize}
\item For an $n$-vertex ordered graph $G$ with no induced monotone path $P_{6}$, if its complement $\overline{G}$ has positive triangle density, then $\overline{G}$ contains a biclique of size $\Omega(\frac{n}{\log{n}})$. This strengthens a recent result of Pach and Tomon. 

For general $k$, let $g(k)$ be the minimum $r\in \mathbb{N}$ such that for any $n$-vertex ordered graph $G$ with no induced monotone $P_{2k}$, if $\overline{G}$ has positive $K_r$-density, then $\overline{G}$ contains a biclique of size $\Omega(\frac{n}{\log{n}})$. Using concentration of measure and the isodiametric inequality on high dimensional spheres, we provide constructions showing that, surprisingly, $g(k)$ grows quadratically. On the other hand, we relate the problem of upper bounding $g(k)$ to a certain Ramsey problem and determine $g(k)$ up to a factor of 2.


\item Any incomparability graph with positive $K_{r}$-density contains a blowup of $K_r$ of size $\Omega(\frac{n}{\log{n}}).$ This confirms a conjecture of Tomon in a stronger form. 
In doing so, we obtain a strong regularity type lemma for incomparability graphs with no large blowups of a clique, which is of independent interest.

We also prove that any $r$-comparability graph with positive $K_{(2h-2)^{r}+1}$-density contains a blowup of $K_h$ of size $\Omega(n)$, where the constant $(2h-2)^{r}+1$ is optimal.
\end{itemize} 
The $\frac{n}{\log n}$ size of the blowups in all our results are optimal up to a constant factor.
\end{abstract}

\section{Introduction}
\subsection{Overview}
The extremal number of a graph $H,$ denoted by $\ex(n, H)$, is the maximum number of edges in an $n$-vertex graph with no copy of $H$. Its historical roots trace back to Mantel's result from $1907$~\cite{1907Mantel}, showing that $\ex(n,K_{3}) = \lfloor \frac{n^2}{4} \rfloor.$ This was generalized by Tur\'{a}n in 1941~\cite{1941Turan} who determined $\ex(n, K_r)$ for all $r$. Later, the Erd\H{o}s--Stone--Simonovits theorem~\cite{1966ES, 1946ErodsBAMS} asserts that $\ex(n, H) = (1 - \frac{1}{\chi(H) - 1} + o(1)) {\binom{n}{2}}$, which asymptotically determines $\ex(n, H)$ for any graph $H$ that is not bipartite. The Erd\H{o}s--Stone--Simonovits theorem is widely considered as ``the 
fundamental theorem of extremal graph theory''. This was strengthened by Bollob\'{a}s, Erd\H{o}s, and Simonovits~\cite{1976JLMSBES} as follows. Denote by $\varrho_r(G)$ the \emph{$K_r$-density} of $G$, which is the number of copies of $K_r$ in $G$ normalized by ${\binom{|G|}{r}}$. For a graph $H$, we denote by $H[t]$ the \emph{$t$-blowup} of $H$ obtained by replacing every vertex of $H$ by an independent set of size $t$ and every edge of $H$ by a copy of $K_{t,t}$. It was shown~\cite{1976JLMSBES} that every $n$-vertex graph $G$ with edge density $\varrho_2(G)\ge 1-\frac{1}{r-1}+\eps$ contains a complete $r$-partite subgraph $K_{r}[\eps^{r}\log{n}]$. By considering random graphs of the same edge density, we see that the logarithmic dependence on $n$ is optimal. 

Given that random graphs are highly unstructured, it seems natural to ask whether larger blowups of cliques can be found in graphs from special families. We focus on finding largest possible blowups -- those of sizes almost linear in the order of $G$. Recently, there have been various results of this type under edge density conditions. For instance, Fox, Pach, and T\'{o}th~\cite{fox2010turan} showed that any $n$-vertex incomparability graph $G$ with $\varrho_2(G)>0$ contains a $K_2[t]$ with $t=\Omega(\frac{n}{\log{n}}).$ Tomon~\cite{2022DCGSharpString} proved that the complement of an $n$-vertex string graph $G$ with $\varrho_2(G)> \frac{3}{4}$ contains a $K_{2}[t]$ with $t=\Omega(n)$.
Apart from these recent developments, the Ramsey problem of finding a biclique of linear size in a graph or its complement also receives a great deal of attention~\cite{2023chostrong,fox2010turan,fu2023note}, as it is closely related to the Erd\H{o}s--Hajnal conjecture~\cite{1989DAMEHConj}.

In a different direction, a beautiful strengthening of the Bollob\'{a}s--Erd\H{o}s--Simonovits theorem by Nikiforov~\cite{2008BLMSNikiforov} replaces the edge density condition by a weaker clique density condition. More precisely, by the so-called supersaturation result of Erd\H{o}s and Simonovits~\cite{erdHos1983supersaturated}, any graph with $\varrho_2(G)> 1 - \frac{1}{r-1}$ not only contains a single $r$-clique, but has positive $K_r$-density $\varrho_r(G)> 0$. Nikiforov~\cite{2008BLMSNikiforov} showed that a graph having $\varrho_r(G)>0$ already suffices to imply a logarithmic blowup of $K_r$.


In this work we combine the above two directions of extensions to find large blowups of cliques in some well-known classes of graphs with clique density conditions instead of edge density. More precisely, we study the following general question.

\begin{ques}\label{ques:Main}
    Given a graph class $\mathcal{G}$ and $t\in\I N$, find the minimum $r\in \I N$ such that if any $G \in \mathcal{G}$ has positive $K_r$-density $\varrho_r(G)>0$,  then $G$ contains a near-linear-sized blowup of $K_t$?    
\end{ques}

\subsection{Our contributions}
\subsubsection{Ordered graphs forbidding monotone induced path}
An \emph{ordered graph} $G_{<}$ is a graph with a total ordering $<$ on its vertex set $V(G)$. We say $H_{<}$ is an \emph{induced ordered subgraph} of $G_{<}$ if there exists an order and adjacency preserving map from $V(H)$ to $V(G),$ i.e. there exists $f:V(H)\rightarrow V(G)$ such that for each pair $u,v\in V(H)$, if $u<v$ then $f(u)<f(v)$, and $uv\in E(H)$ if and only if $f(u)f(v)\in E(G)$. For extremal problems concerning ordered graphs, we refer the reader to some of the recent work~\cite{2019OrderedBDeg, 2017OrderedRamsey, 2019JCTAOrderedForest, 2022TuranOrderedGraph, 2006IJMOrdered} and the references therein.

Ordered graphs have close connections to geometrically defined graphs. 
Often, much better quantitative Ramsey results hold for graphs arising from geometric settings. For instance, 
Fox, Pach, and T\'oth~\cite{fox2010turan} proved that for any $n$-vertex intersection graph $G$ of $x$-monotone curves in the plane, either $G$ contains a $K_{2}[\frac{cn}{\log{n}}]$ or its complement $\overline{G}$ contains a $K_{2}[cn]$ for some $c>0$. The original proof in~\cite{fox2010turan} heavily relies on the geometric nature of the $x$-monotone curves.
Recently, Pach and Tomon~\cite{2019EuroCombTomonPach} provided a simpler combinatorial proof using ordered graphs. A \emph{monotone path} $P_{k}$ is an ordered graph with $k$ vertices $v_{1}<v_{2}<\cdots<v_{k}$ in which $v_{i}$ and $v_{j}$ are adjacent if and only if $|i-j|=1$. A key ingredient in their proof is the following result.

\begin{theorem}[Pach--Tomon~\cite{2019EuroCombTomonPach}]\label{thm:tomon-pach-ordered}
For every $k\ge 3$, there exists $0\le \beta(k)<1$ such that the following holds. Let $\eps>0$ and $G=G_{<}$ be an $n$-vertex ordered graph with no induced monotone path $P_{k}$ and its complement satisfies $\delta(\overline{G})\ge(\beta(k)+\eps)n$. Then $\overline{G}$ contains a copy of $K_{2}[\frac{cn}{\log{n}}]$, where $c=c(\eps)>0$.
\end{theorem}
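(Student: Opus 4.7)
My plan is to prove this by induction on $k$, with base case $k = 3$. For the base case, the absence of induced monotone $P_3$ forces ``forward transitivity'': for any $u < v < w$ in the ordering, $uv, vw \in E(G)$ implies $uw \in E(G)$. This rigid structure decomposes $G$ essentially into cliques on consecutive intervals of the ordering, and combined with the minimum-degree condition on $\overline{G}$ (with $\beta(3)$ taken small), one locates a long interval carrying two substantial $G$-independent sets separated in the ordering; this yields the biclique of size $\Omega(n/\log n)$ in $\overline{G}$.

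For the inductive step, assume the statement for $k-1$ with threshold $\beta(k-1)<1$, and prove it for $k$ with $\beta(k) := \tfrac{1+\beta(k-1)}{2}$. Given $G$ satisfying the hypotheses for $k$, I would build the two parts $A < B$ of the desired biclique via a greedy/recursive procedure. At each round, pick a pivot $v$ from the current candidate set, place it in $A$ or $B$ according to its position, and restrict the candidate set to the $\overline{G}$-neighbors of $v$ lying on the appropriate side of the ordering. The degree condition guarantees that each restriction preserves a $(\beta(k)+\varepsilon)$-fraction of candidates, and automatically carries forward the required $G$-non-adjacencies between the new pivot and earlier ones.

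When the greedy cannot be extended further while maintaining a sufficiently dense subgraph, one invokes the inductive hypothesis: on the current candidate set one verifies that the $\overline{G}$-minimum degree still exceeds $(\beta(k-1) + \varepsilon') n'$ (this is where the specific choice $\beta(k) = \tfrac{1+\beta(k-1)}{2}$ is used), while simultaneously no induced monotone $P_{k-1}$ in this subgraph can be $G$-adjacent to the accumulated pivots -- otherwise concatenating would produce an induced monotone $P_k$ in $G$, contradicting the hypothesis. Iterating over $O(\log n)$ rounds of restriction and applying the inductive hypothesis to the final candidate set yields a biclique of size $\Omega(n/\log n)$.

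The chief technical difficulty lies in preserving the \emph{induced} (not merely walk) structure of the hypothetical monotone $P_k$: this requires tracking both where the pivots lie and the $G$-non-adjacency pattern between them and the candidate set at each restriction step, not merely consecutive adjacencies. Correctly propagating the degree slack across all rounds is what forces the recursion $\beta(k) = \tfrac{1+\beta(k-1)}{2}$, which simultaneously explains why some $\beta(k) < 1$ suffices and why $\beta(k)$ must approach $1$ as $k$ grows.
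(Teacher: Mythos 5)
This statement is quoted by the paper from Pach--Tomon and is not proved in the text, so there is no in-paper argument to compare against; judging your sketch on its own terms, it has genuine gaps. First, the base case is mischaracterized. Forbidding an induced monotone $P_3$ does \emph{not} decompose $G$ into cliques on consecutive intervals; it makes $G$ the comparability graph of the partial order $u\prec w \iff (u<w \text{ and } uw\in E)$, so $\overline{G}$ is an incomparability graph and the base case is exactly the Fox--Pach--T\'oth theorem (\cref{thm:FPT}). That theorem is itself nontrivial: the $\log n$ in the denominator comes from a chain/antichain dichotomy (as in \cref{lm:FHT}) iterated $O(\log n)$ times, and nothing in your sketch produces this factor -- ``locating a long interval carrying two substantial independent sets'' is precisely the hard step, not a consequence of the rigid structure.

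Second, the inductive step's greedy pivot scheme cannot deliver the claimed bound. Each restriction to the $\overline{G}$-neighborhood of a pivot (intersected with one side of the ordering) shrinks the candidate set by a multiplicative factor at most $\beta(k)+\eps<1$, so after $t$ rounds only $(\beta(k)+\eps)^t n$ candidates survive. You cannot accumulate $\Omega(n/\log n)$ pivots this way, and if instead you run only $O(\log n)$ rounds and hand the final candidate set to the inductive hypothesis, that set has size $n^{1-\Omega(1)}$ and the resulting biclique is far smaller than $n/\log n$. There is also no argument that the minimum-degree hypothesis survives more than one restriction: the inclusion--exclusion slack built into $\beta(k)=\frac{1+\beta(k-1)}{2}$ is consumed after a single round. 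Finally, the induced-path bookkeeping you flag as the ``chief technical difficulty'' is not resolved: to concatenate a pivot $v$ with an induced monotone $P_{k-1}$ on vertices $u_1<\dots<u_{k-1}$ you need $vu_1\in E(G)$ but $vu_i\notin E(G)$ for all $i\ge 2$, a mixed adjacency/non-adjacency pattern that restricting to $\overline{G}$-neighborhoods (which enforces only non-adjacency) does not provide. As written, the proposal is a plausible-sounding outline whose every quantitative step fails.
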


This theorem generalizes the result of Fox, Pach, and T\'oth~\cite{fox2010turan} who proved it for the case $k=3$ (in this case $G$ is a comparability graph) with the optimal constant $\beta(3)=0$ (see~\cref{thm:FPT}). It remains an open problem to determine the optimal value of $\beta(k)$ for larger $k$. When $k=5$, Pach and Tomon~\cite{2019EuroCombTomonPach} provided the following example showing that $\beta(5)\ge 1/2$. Take the union of two disjoint cliques on $[n/2]$ and $[n]\setminus[n/2]$ respectively and add cross edges randomly and independently with probability $\eps>0$. The resulting graph $G$ has no induced monotone $P_5$ and with high probability $\delta(\overline{G})\ge (1/2 - \eps)n$, but the maximum size of a biclique in $\overline{G}$ is $O_{\eps}(\log n)$. 

We confirm that the above construction is optimal, i.e.,~$\beta(5)=1/2$. In fact, we prove it in a stronger form, showing that rather than the edge density condition as in~\cref{thm:tomon-pach-ordered}, large blowups originates from having positive triangle density.

\begin{theorem}\label{thm:p5}
    Let $\eps>0$ and $G=G_{<}$ be an $n$-vertex ordered graph with no induced monotone path $P_{5}$ and $\varrho_3(\overline{G})>\eps$. Then $\overline{G}$ contains a copy of $K_{2}[\frac{cn}{\log{n}}]$, where $c=c(\eps)>0$.
\end{theorem}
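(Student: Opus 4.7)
The plan is to use the positive triangle density to isolate a dense bipartite structure, and then exploit the forbidden induced monotone $P_5$ to extract a near-linear sized biclique in $\overline G$.

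\textbf{Step 1 (Find a dense bipartite link).} Since $\overline G$ contains at least $\eps\binom{n}{3}$ triangles, averaging over the \emph{middle} vertex of the ordered triples yields a vertex $v$ whose link
\[
\Lambda_v \;:=\; \{(a,b): a<v<b,\; ab,\,va,\,vb\in E(\overline G)\}
\]
has size at least $\Omega(\eps n^2)$. Setting $L:=\{u<v: uv\notin E(G)\}$ and $R:=\{w>v: vw\notin E(G)\}$ (the left and right $\overline G$-neighborhoods of $v$), this means the bipartite graph $\overline G[L,R]$ has $\Omega(\eps n^2)$ edges; in particular $|L|,|R|=\Omega_\eps(n)$.

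\textbf{Step 2 (Translate $P_5$-freeness to bipartite forbidden patterns).} Because $v$ is $\overline G$-adjacent to every vertex of $L\cup R$, it cannot appear in any induced monotone $P_5$ of $G$ whose other four vertices lie in $L\cup R$; such a $P_5$ would require $v$ to be $G$-adjacent to its neighbors along the path, contradicting the definition of $L,R$. Consequently every induced monotone $P_5$ of $G$ supported on five vertices of $L\cup R$ is forbidden. Splitting such a potential $P_5$ according to its vertices' membership in $L$ vs.\ $R$ (i.e., the $3+2$, $2+3$, $4+1$, $1+4$, and the two monochromatic splits) gives explicit forbidden ordered patterns for $G[L,R]$. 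For instance, in a $3+2$ split $a_1<a_2<a_3\in L$, $b_1<b_2\in R$, the pattern with $G$-edges exactly $\{a_1a_2, a_2a_3, a_3b_1, b_1b_2\}$ among these five vertices is forbidden.

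\textbf{Step 3 (Amplify to a near-linear biclique).} Combining the forbidden patterns from Step~2 with the $\Omega(\eps n^2)$ edge lower bound on $\overline G[L,R]$ from Step~1, we aim to produce a biclique $K_2[\Omega(n/\log n)]$ in $\overline G[L,R]$. The main obstacle lies here: a generic dense bipartite graph only contains bicliques of logarithmic size, so the forbidden patterns must play a crucial role. The plan is to iteratively extract logarithmic blowups via a Nikiforov-type argument and then use the forbidden bipartite patterns (which impose a ``prefix/suffix'' monotonicity on the $G$-adjacency between the two sides) to pigeonhole vertices into a bounded number of adjacency types. A single pigeonhole round yields a biclique of sides $\Omega(n/\mathrm{polylog}(n))$ and $\Omega(\log n)$; iterating $O(\log\log n)$ rounds -- at each step treating the current ``small'' side as the new ground set and reapplying the density/structural argument on it -- should balance both sides up to the desired $\Omega(n/\log n)$. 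This iterative amplification, powered by the monotone structure forced by the forbidden $P_5$ patterns, is the heart of the argument and the main technical hurdle.
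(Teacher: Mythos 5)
Your Step~3 is not a proof; it is a statement of intent. You write ``we aim to produce,'' ``the plan is to iteratively extract,'' and ``should balance both sides,'' and you explicitly flag this step as ``the heart of the argument and the main technical hurdle.'' Nothing in the proposal actually shows that the forbidden bipartite patterns force a near-linear biclique, nor that the proposed iterative pigeonholing terminates with the claimed bound. This is a genuine gap, not a detail to be filled in: the entire difficulty of the theorem is concentrated exactly where you stop.

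Step~2 also does not do useful work. The observation that $v$ cannot lie on an induced monotone $P_5$ with four other vertices from $L\cup R$ is vacuously true — $G$ has no induced monotone $P_5$ anywhere — and the ``consequence'' you draw (no induced monotone $P_5$ on five vertices of $L\cup R$) is already just the hypothesis restricted to $G[L\cup R]$. The vertex $v$ plays no role in producing the bipartite constraints you list, so they are not stronger than plain $P_5$-freeness of $G[L\cup R]$. In particular, you have not leveraged the triangle-density hypothesis beyond obtaining a dense pair $(L,R)$, and you have not explained why this dense pair plus $P_5$-freeness forces any ``prefix/suffix monotonicity'' on $G$-adjacency between $L$ and $R$. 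Compare with the paper's route, which is structurally quite different: it applies Szemer\'edi regularity to $\overline G$ to find regular dense triples $A_1<A_2<A_3$, proves a dichotomy (via the Pach--Tomon theorem) that either some induced subgraph of $G$ is sparse — giving the biclique directly — or $G$ is $(\alpha,\beta)$-dense everywhere, reduces the remaining case to a Ramsey statement about an ``admissible'' red--blue coloring of an ordered clique (the dependency digraph is acyclic when $f(3)=3$), and then performs a delicate $2k$-step embedding of an induced monotone $P_{2k}$ into the regular parts to derive a contradiction. Your proposal has no analogue of this dichotomy, no analogue of the admissibility structure, and no actual embedding argument. As it stands the proposal does not prove the theorem.
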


It is worth pointing out that this theme of \emph{``replacing edge density by clique density''} not only reveals the real cause of the phenonmenon occured (here being the appearance of large biclique), but also implies the stability result for the edge density version. Indeed, if an $n$-vertex ordered graph with no induced monotone $P_5$ has edge density close to $1/2$ and no biclique of size $\Omega(\frac{n}{\log n})$, then by~\cref{thm:p5} it must have zero triangle density. Thus, it follows from the classical Erd\H{o}s--Simonovits stability result~\cite{1966ES} that the graph $G$ must be close to balanced complete bipartite graph.

By considering the union of $s$ disjoint $\frac{n}{s}$-cliques and adding random edges between distinct cliques, we see that $\beta(2s+1)\ge 1-\frac{1}{s}$. After seeing~\cref{thm:p5}, it is tempting to believe that for an induced monotone $P_{2s+1}$-free $G$, positive $K_{s+1}$-density in $\overline{G}$ forces a large biclique in $\overline{G}$. Much to our own surprise, this is already false for the next case when forbidding $P_7$! It turns out that we need a positive $K_5$-density.

\begin{theorem}\label{thm:p7}
  For any $\eps>0$ and an $n$-vertex ordered graph $G=G_{<}$ with no induced monotone $P_{7}$, if $\varrho_5(\overline{G})>\eps$, then $\overline{G}$ contains a copy of $K_{2}[\frac{cn}{\log{n}}]$, where $c=c(\eps)>0$.

  On the other hand, there are $n$-vertex ordered graphs $G$ with no induced monotone path $P_{7}$, $\varrho_4(\overline{G})>0$ and no biclique of size $n/e^{o(\sqrt{\log n/\log\log n})}$ in $\overline{G}$.

\end{theorem}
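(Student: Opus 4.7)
The plan is to handle the two parts of \cref{thm:p7} by different techniques. For the sufficiency direction, the strategy mirrors the approach one would take for \cref{thm:p5} but exploits the longer forbidden pattern by starting from a richer clique structure. First, using Nikiforov's theorem on $\overline G$ with $\varrho_5(\overline G)>\eps$, I extract ordered blocks $A_1<A_2<A_3<A_4<A_5$ of size $m=\Omega_\eps(\log n)$ forming a $K_5[m]$ in $\overline G$ (equivalently, five mutually independent sets in $G$). The goal is then to promote a suitable pair (say $A_2$ and $A_4$) to a biclique of size $\Omega(n/\log n)$ in $\overline G$.

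To carry out the promotion, for each outside vertex $u$ I would record its position relative to chosen representatives $a_i\in A_i$ together with its $G$-adjacency pattern to the $a_i$'s. A case analysis rules out any profile that, together with the $a_i$'s, would complete an induced monotone $P_7$ in $G$; only a bounded number of profiles survive. Pigeonhole then yields a linearly large class $\mathcal{C}$ sharing one admissible profile. A second $P_7$-avoidance argument (this time invoking two outside vertices $u,u'\in\mathcal{C}$ together with three of the $a_i$'s) constrains the $G$-edges inside $\mathcal{C}$ enough to run a Fox--Pach--T\'oth-style recursive biclique extraction, producing the desired $K_2[cn/\log n]$ in $\overline G$ with $c=c(\eps)>0$.

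For the construction, the plan is to use a recursive random construction with $d=\Theta(\sqrt{\log n/\log\log n})$ levels. At the top level, partition $[n]$ into blocks, place random cross-block edges with a carefully tuned density, and recurse inside each block with the induced ordering. One then argues by induction on $d$ that (a) the resulting graph is monotone $P_7$-free, since any induced monotone $P_7$ would project to a short monotone configuration in the block-level graph that can be blocked by the coarse random structure; (b) $\varrho_4(\overline G)>0$, since at each level four random vertices fall in distinct blocks with positive probability and cross-block edges are sparse enough to leave all six pairs non-adjacent with positive probability; and (c) the largest biclique in $\overline G$ has size at most $n/e^{\Omega(d)}=n/e^{\Omega(\sqrt{\log n/\log\log n})}$, since any biclique of size $s$ forces a compatibility condition across all $d$ levels, each incurring a multiplicative factor of at most $e^{-\Omega(1)}$ on the surviving mass.

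The principal obstacle lies in the construction: achieving $P_7$-freeness and biclique suppression simultaneously requires a delicate balance, because unstructured randomness tends to destroy both induced $P_7$'s and bicliques, whereas overly rigid block structure reintroduces large bicliques. Navigating this trade-off through the recursive construction, and verifying all three properties by induction on the depth $d$, will be the technically demanding step; the quantitative choice of block sizes and edge biases across levels must be optimized to land exactly at the target $\sqrt{\log n/\log\log n}$ exponent.
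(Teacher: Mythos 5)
Your approach diverges from the paper's on both halves, and both divergences introduce gaps.

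For the upper bound, the paper does not invoke Nikiforov's theorem. Starting from Nikiforov's $K_5[m]$ with $m=\Omega(\log n)$ is too weak a foundation: the bridging step you propose — pigeonholing the outside vertices by their profile relative to five fixed representatives $a_1,\dots,a_5$, then running a Fox--Pach--T\'oth-style recursion — does not close for $P_7$. One outside vertex plus the five representatives is only six vertices, and two outside vertices plus three representatives is only five; neither assembles the seven vertices of a monotone $P_7$, so the ``case analysis that rules out profiles'' you invoke has nothing to rule them out against. The paper instead uses the regularity lemma (\cref{lem:clique-in-reduced-graph}) to upgrade positive $K_5$-density in $\overline G$ to five ordered, linear-sized sets $A_1<\cdots<A_5$ with $\eps$-regular dense pairs in $\overline G$, then encodes the within-block versus between-block density pattern as a red-blue coloring, introduces the dependency digraph (\cref{def:dependency-digraph}), proves the Ramsey bound $f(4)\le 5$, selects four blocks with an admissible coloring, and finally embeds a monotone $P_8$ vertex by vertex along a topological order of the dependency digraph (\cref{thm:paths-embedding}). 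The biclique emerges precisely at the one ``red'' step where the embedding would otherwise fail. This abstraction via $f(k)$ is what makes the counting exact; your sketch has no analogue of it.

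For the construction, you propose a recursive random block construction while the paper builds a geometric graph. The paper partitions the sphere $\mathbb S^{h-1}$ with $h=\log n/\log\log n$ into small equal-measure cells (\cref{lem: partition-sphere}), places copies of the cell representatives in $s=\lfloor k/2\rfloor$ ordered groups each split into $t=\lceil k/2\rceil$ randomly ordered subgroups, and defines edges via distance thresholds $2-\mu$ and $\sqrt2-\mu$. The Bollob\'as--Erd\H os property (\cref{thm:BEIsK4free}) simultaneously kills induced monotone $P_{2k-1}$'s and large bicliques, \cref{lemma:LargeMeasureToLargeDistance} forbids bicliques of the claimed size, and \cref{lemma:Measure} with \cref{lem:cap-UB} furnishes positive $K_{\lfloor k^2/4\rfloor}$-density. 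You correctly identify that balancing $P_7$-freeness against biclique suppression is the crux, but you have not resolved it, and I see no reason the recursive random construction would land at the correct threshold or yield positive $K_4$-density with the required uniformity. The geometric route sidesteps the trade-off entirely: the sphere's concentration of measure provides the biclique bound and the $K_4$-density for free, while the Bollob\'as--Erd\H os obstruction handles the forbidden path. I would start there.
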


\cref{thm:p5,thm:p7} are special cases of our main result. To state it, let $g(k)$ be the minimum $r\in \I N$ such that if any $n$-vertex ordered graph $G$ with no induced monotone $P_{2k}$ satisfies $\varrho_r(\overline{G})>0$, then $\overline{G}$ contains a copy of $K_2[\Omega(\frac{n}{\log n})]$. We in fact prove  $g(3)=3$ and $g(4)=5$, which imply~\cref{thm:p5,thm:p7} (with the weaker $P_6$-free or $P_8$-free condition). For general $k$, although the construction showing $\beta(2s+1)\ge 1-\frac{1}{s}$ is not optimal, it is not inconceivable that $g(k)$ is linear in $k$. However, we show that the growth rate of $g(k)$ is rather quadratic, determinging $g(k)$ up to a factor of 2.

Our main result reads as follows.

\begin{theorem} \label{thm:induced-intro}
   For $k \ge 3$, we have $\lfloor\frac{k^{2}}{4}\rfloor< g(k)\le \frac{k^2-k+2}{2}$. That is, the followings hold.
   \begin{itemize}
       \item For any $\eps > 0$ and an $n$-vertex ordered graph $G = G_<$ with no induced monotone $P_{2k}$, if $\varrho_{r}(\overline{G})>\eps$, where $r=\frac{k^2-k+2}{2}$, then $\overline{G}$ contains a copy of $K_{2}[\frac{cn}{\log{n}}]$, where $c=c(\eps)>0$.

       \item On the other hand, there are $n$-vertex ordered graphs $G$ with no induced monotone path $P_{2k-1}$, $\varrho_{\lfloor\frac{k^{2}}{4}\rfloor}(\overline{G})>0$ and no biclique of size $n/e^{o(\sqrt{\log n/\log\log n})}$ in $\overline{G}$.
   \end{itemize}
\end{theorem}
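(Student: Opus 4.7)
The theorem has two parts -- an upper bound $g(k) \le (k^2-k+2)/2$ and a lower bound $g(k) > \lfloor k^2/4 \rfloor$ -- each requiring a separate approach.

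For the upper bound, note that $r := (k^2-k+2)/2 = \binom{k}{2}+1$. The plan is to reduce the problem to a Ramsey-type statement about the $K_r$'s of $\overline{G}$. Given an $n$-vertex ordered graph $G = G_<$ with no induced monotone $P_{2k}$ and $\varrho_r(\overline{G}) > \eps$, I would first extract, via supersaturation or a dependent random choice argument, a copy $v_1 < v_2 < \cdots < v_r$ of $K_r$ in $\overline{G}$ whose $r$ vertices share a linear-sized common non-neighborhood in $G$. To each pair $(v_i, v_j)$ with $i<j$ I would assign a combinatorial \emph{type} recording how $v_i$ and $v_j$ can be linked through $G[(v_i, v_j)]$: the type encodes either a short monotone shortcut (extendable into a longer monotone path) or a structural obstruction that forces a biclique in $\overline{G}$. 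The key step is a Ramsey-type claim: among $\binom{k}{2}+1$ indices equipped with such pairwise types, one can always select $k$ indices whose types either chain together into an induced monotone $P_{2k}$ -- contradicting the hypothesis -- or coalesce to produce a biclique of size $\Omega(n/\log n)$ in $\overline{G}$. The principal difficulty is to design the type function so that the Ramsey threshold is precisely $\binom{k}{2}+1$, rather than the classical $R(k,k)$; this is the step where the quadratic growth in $k$ surfaces naturally.

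For the lower bound, the plan is a geometric construction. I would distribute $n$ random (or carefully chosen) unit vectors $x_1, \ldots, x_n$ on a sphere $S^{d-1}$ for a suitably chosen dimension $d$, impose the ordering given by a linear functional on $\mathbb{R}^d$, and declare $ij \in E(G)$ iff $\innprod{x_i, x_j} > \tau$ for a tuned threshold $\tau$. Then bicliques in $\overline{G}$ correspond to pairs of large vector sets of bounded pairwise inner product; standard concentration of measure together with the spherical isodiametric inequality confine such sets to size at most $n/e^{\Omega(\sqrt{\log n/\log\log n})}$, yielding the desired biclique upper bound. The parameters $d$ and $\tau$ should be calibrated so that $\lfloor k^2/4\rfloor$ mutually "far" vectors (and hence $K_{\lfloor k^2/4\rfloor}$'s in $\overline{G}$) occur with positive density; the factorization $\lfloor k^2/4\rfloor = \lfloor k/2\rfloor \cdot \lceil k/2\rceil$ hints at building each $x_i$ from two independent spherical blocks, so that independent sets in $G$ inherit a bipartite-product character. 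The most delicate step is verifying that no induced monotone $P_{2k-1}$ appears: the interaction between the coordinate ordering and the inner-product threshold should force any would-be induced monotone alternation of length $2k-1$ to demand $\lfloor k^2/4\rfloor + 1$ mutually far vectors, which the concentration estimates on $S^{d-1}$ rule out.
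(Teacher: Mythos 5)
Your high-level outline points in roughly the right direction for both bounds, but both halves have genuine gaps where the hard work actually lives.

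For the upper bound, you correctly spot the reduction to a Ramsey-type statement, and you correctly compute $r = \binom{k}{2}+1$. But the ``types'' and the claimed threshold $\binom{k}{2}+1$ are left entirely unspecified, and this is exactly where the content is. The paper's notion is the \emph{dependency digraph} $D(\chi)$ of a red-blue coloring $\chi$ of the ordered clique: a directed edge $(v_j, v_i)$ (and $(v_j, v_{i+1})$) appears whenever $v_iv_{i+1}$ is red and both $v_iv_j, v_{i+1}v_j$ are blue; admissibility is acyclicity of $D(\chi)$; $f(k)$ is the Ramsey number for finding $k$ vertices with admissible induced coloring. The crucial computation giving $f(k)\le\binom{k}{2}+1$ is a pigeonhole on the lengths of longest blue paths, not a generic Ramsey argument -- without that specific structure there is no reason the threshold comes out quadratic rather than $R(k,k)$. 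Moreover, your plan to ``extract a single $K_r$ in $\overline{G}$ with large common non-neighborhood'' does not match what is actually needed: the embedding requires $f(k)$ \emph{sets} $A_1<\cdots<A_{f(k)}$, each of linear size, pairwise $\varepsilon$-regular in $\overline{G}$ (obtained via the regularity lemma applied to the $\eta n^{f(k)}$ copies of $K_{f(k)}$), with the coloring defined by a density threshold in $G$ between $A_i$ and $A_j$. The dependency-digraph edges are triple-indexed, not pair-indexed; a pair-type encoding misses the needed structure, and the acyclicity / topological-ordering mechanism that drives the step-by-step path embedding is absent from your sketch.

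For the lower bound, your geometric-concentration intuition is indeed the spirit of the paper's construction, and the biclique bound via the isodiametric inequality is the right tool. But your proposed construction -- random unit vectors, a single inner-product threshold $\tau$, ordering by a linear functional -- very likely does not forbid induced monotone $P_{2k-1}$, and you offer no actual argument that it does. The paper's construction is more structured: $V(G)$ is partitioned into $s\cdot t$ blocks $V_i^{(j)}$ (with $s=\lfloor k/2\rfloor$, $t=\lceil k/2\rceil$, and the ordering $V_1<\cdots<V_s$, $V_i^{(1)}<\cdots<V_i^{(t)}$ hardwired by blocks, not a linear functional); each $V_i^{(j)}$ is a \emph{clique}; within $V_i$ but across sub-blocks an edge requires spherical distance $\ge 2-\mu$ (a Bollob\'as--Erd\H{o}s ``far'' edge); and across different $V_i$ an edge requires distance $>\sqrt2-\mu$ (a ``not-too-close'' threshold). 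The clique structure forces any induced monotone path to use at most two vertices per $V_i^{(j)}$, and the $P_{2k-1}$-freeness then follows by invoking the $K_4$-free geometric property of the Bollob\'as--Erd\H{o}s configuration (\cref{thm:BEIsK4free}) on pairs of ``far'' edges in different $V_i$. Your proposal has no mechanism to force the clique-inside-block structure, and a single threshold rule on random points makes both ``far'' and ``close'' relations into the same edge type, which destroys the argument. So the $P_{2k-1}$-freeness, which you flag as the most delicate step, is indeed a genuine gap in your proposal.
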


We remark that the $n/\log n$ blowup size in~\cref{thm:p5,thm:p7,thm:induced-intro} are optimal up to a constant factor by a construction in~\cite{fox2006bipartite}.

To prove~\cref{thm:induced-intro}, we reduce the problem of upper bounding $g(k)$ to a Ramsey problem (see~\cref{def:dependency-digraph}).
The lower bound construction is geometric and utilizes concentration of measure and isodiametric inequality on high dimensional spheres, motivated by the Bollob\'{a}s--Erd\H{o}s graph in Ramsey--Tur\'{a}n theory~\cite{2013IJMRT,1976BE,2015FoxLohZhao,2021LiuReiherRT}. It is interesting to see such a connection between these two seemingly unrelated problems.


\subsubsection{Graphs on posets}

Given a partially ordered set $(P,\prec)$, its \emph{(in)comparability graph} is a graph with vertex set $P$ in which two vertices form an edge if and only if the corresponding elements are (in)comparable in $P$. Fox, Pach and T\'{o}th~\cite{fox2010turan} proved the following result on existence of large complete bipartite graph in incomparability graphs with positive edge density.

\begin{theorem}[Fox--Pach--T\'{o}th~\cite{fox2010turan}]\label{thm:FPT}
    Let $\eps>0$ and $G$ be an $n$-vertex incomparability graph with $\varrho_2(G)>\eps$. Then $G$ contains a $K_{2}[\frac{cn}{\log{n}}]$, where $c=c(\eps)>0$.
\end{theorem}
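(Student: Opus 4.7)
\medskip

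\noindent\textbf{Proof proposal.} My plan is to prove the equivalent contrapositive: there is a function $c(\eps)>0$ such that any $K_{t,t}$-free incomparability graph $G$ on $n$ vertices with $t=c(\eps)n/\log n$ satisfies $\varrho_2(G)\le \eps/2$, contradicting $\varrho_2(G)>\eps$. Quantitatively, I would show that every $K_{t,t}$-free incomparability graph on $n$ vertices has at most $C_0\, tn\log n$ edges for some absolute constant $C_0$, and then pick $c(\eps)$ with $C_0 c(\eps)\le \eps/4$.

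The key structural input is a simple consequence of transitivity. Let $(P,\prec)$ be the poset with incomparability graph $G$. For each $v\in P$, partition $V(G)\setminus\{v\}$ into the neighbors $N(v)$ (incomparable to $v$), the up-set $U(v)=\{u:u\succ v\}$, and the down-set $D(v)=\{w:w\prec v\}$. Transitivity forces every pair in $U(v)\times D(v)$ to be comparable, so there is no edge of $G$ between $U(v)$ and $D(v)$. I would choose $v_0$ to be the median element of some linear extension of $\prec$, giving $|U(v_0)|,|D(v_0)|\le n/2$, and decompose $E(G)$ into: at most $n$ edges incident to $v_0$; the internal edges of the three induced subgraphs $G[U(v_0)],\,G[D(v_0)],\,G[N(v_0)]$; and the ``cross'' edges between $N(v_0)$ and $U(v_0)\cup D(v_0)$. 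I would bound the cross edges by $O(tn)$ using that the bipartite cross graph is $K_{t,t}$-free and inherits a chain-like structure from the partial order (for each $x\in N(v_0)$ its neighborhood inside $U(v_0)$ is constrained by how $x$ compares with the up-set of $v_0$). Iterating the same recipe on each of the three pieces, the branches $G[U(v_0)]$ and $G[D(v_0)]$ halve automatically, contributing $O(tn)$ per level over $O(\log n)$ levels; the middle branch $G[N(v_0)]$ is handled by iterating the separator choice inside $N(v_0)$ until its size also drops by a constant factor, or by picking $v_0$ to balance all three blocks simultaneously.

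The main obstacle is obtaining the $O(tn)$ cross-edge bound: the generic K\H{o}v\'ari--S\'os--Tur\'an estimate gives only $O(t n^{2-1/t})$ edges, which is useless for $t$ as small as $cn/\log n$. What rescues the argument is that the cross bipartite graph is not arbitrary, but comes from an incomparability graph: for each $x\in N(v_0)$ the non-neighbors of $x$ inside $U(v_0)$ again split into ``above $x$'' and ``below $x$'' blocks with no cross edges between them by the same transitivity principle. Running a further separator step on the cross graph itself, and iterating over $O(\log n)$ levels, should produce the missing $O(tn)$ bound. Turning this into a clean quantitative statement is, I expect, the subtle step, and it is precisely what upgrades the blowup size from a weaker $n^{1-o(1)}$ bound to the sharp $n/\log n$.
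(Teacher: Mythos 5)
The theorem you are asked about is quoted in the paper as an external result of Fox, Pach and T\'oth and is not proved there, so the only question is whether your outline stands on its own. It does not yet: the two steps you flag as ``subtle'' are in fact the entire content of the theorem, and your sketch of them contains an error. Your structural observations are correct as far as they go: $U(v_0)$ and $D(v_0)$ span no edges of $G$ by transitivity, and taking $v_0$ to be the median of a linear extension makes both sets have size at most $n/2$. But first, the recursion on $G[N(v_0)]$ need not make progress --- if the induced subposet on $N(v_0)$ has few comparable pairs, every further separator produces tiny up- and down-sets and the ``middle'' block essentially never shrinks, so the depth of your recursion is not $O(\log n)$; ``picking $v_0$ to balance all three blocks'' is impossible in general. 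Second, your proposed mechanism for the $O(tn)$ cross-edge bound is based on a false symmetry: for $x\in N(v_0)$ and $u\in U(v_0)$ one cannot have $u\prec x$ (else $v_0\prec u\prec x$ would make $x$ comparable to $v_0$), so the non-neighbours of $x$ in $U(v_0)$ all lie \emph{above} $x$ --- there is no ``above/below'' split to recurse on within the cross graph. The cross graph is a bipartite poset in which comparabilities go only one way, and bounding its edges when it is $K_{t,t}$-free is precisely the bipartite analogue of Dilworth's theorem that constitutes the heart of the Fox--Pach--T\'oth argument; it requires its own recursive halving lemma, which you have not supplied.

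The tension you are running into --- either comparabilities are plentiful, in which case the up/down separators shrink the problem, or incomparabilities are plentiful, in which case one must extract a large biclique directly --- is exactly what the partitioning lemma of Fox and Pham (Lemma~2.4 in this paper) packages: every poset contains either a long chain of large sets $A_1\succ\cdots\succ A_\ell$ or $\ell$ pairwise incomparable sets of size $\Omega(n/(\ell^2\log n))$, the latter immediately yielding the $K_2[\Omega(n/\log n)]$. Indeed the paper's Theorem~1.6 with $k=2$ recovers and strengthens the statement you were asked to prove via that route. To complete your proof you would need either to prove such a dichotomy yourself or to establish the $O(tn)$ bound for $K_{t,t}$-free one-directional bipartite posets; neither is routine, so as written the proposal has a genuine gap.
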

Later this result was generalized for $k>2$ by Tomon~\cite{2016OrderTomon}, who showed that if an incomparability graph $G$ satisfies $\varrho_2(G)\ge 1-\frac{1}{9(k-1)}+\varepsilon$, then $G$ contains a copy of $K_{k}[\frac{cn}{(\log{n})^{s}}]$, where $c=c(k,\varepsilon)>0$ and $s=\lceil\log_{2}k\rceil$. Tomon proposed the following conjecture.
\begin{conjecture}[Tomon~\cite{2016OrderTomon}]\label{conj:Tomon'sConj}
   Let $\eps>0$ and $G$ be an $n$-vertex incomparability graph with $\varrho_2(G)>1-\frac{1}{k-1}+\eps$. Then $G$ contains a $K_{k}\big[\frac{cn}{(\log{n})^{s}}\big]$, where $c=c(\varepsilon,k)>0$  and $s=\lceil\log_{2}k\rceil$. 
\end{conjecture}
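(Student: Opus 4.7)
\emph{Proof plan.} The plan is to establish the strengthening announced in the abstract, which implies Conjecture~A via supersaturation. The target is: every $n$-vertex incomparability graph $G$ with $\varrho_k(G) > \delta$ contains a copy of $K_k\bigl[\Omega(n/\log n)\bigr]$. Granting this, if $\varrho_2(G) > 1 - \frac{1}{k-1} + \eps$, then the Erd\H{o}s--Simonovits supersaturation theorem gives $\varrho_k(G) \geq \delta(k,\eps) > 0$, and the resulting blowup is much stronger than the conjectured $K_k\bigl[\Omega\!\bigl(n/(\log n)^{\lceil \log_2 k\rceil}\bigr)\bigr]$.

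To prove the strengthened statement, the first step is to develop a strong regularity-type lemma for incomparability graphs avoiding large clique blowups, as advertised in the abstract. The starting ingredient is the Fox--Pach--T\'oth dichotomy (\cref{thm:FPT}): any bipartite incomparability graph of positive edge density contains a biclique of size $\Omega(n/\log n)$. I would iterate this into a partitioning scheme, repeatedly splitting vertex classes until every pair is either essentially empty or hosts a near-linear biclique. Because the graph is assumed to forbid $K_k[m]$ with $m = cn/\log n$, the iteration must terminate after polylogarithmically many steps rather than after tower-many, yielding a regularity lemma with a quantitative part-count bound far better than Szemer\'edi regularity.

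The second step uses $\varrho_k(G) \geq \delta$ to locate, in the reduced graph, $k$ parts $V_{i_1}, \dots, V_{i_k}$ forming a clique of density close to $1$. Between each pair the regularity lemma supplies a large biclique; these must then be coordinated into a single $K_k$-blowup. The crucial structural property is that any common neighborhood inside an incomparability graph is again an incomparability graph, so the Fox--Pach--T\'oth argument can be re-applied inside a shrinking but still linear-sized common set, losing only a constant factor in size at each of $O(k^2)$ refinement steps and accumulating a single $\log n$ factor overall.

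The main obstacle will be the quantitative aspect: standard regularity produces tower-type part counts, which is fatal when the target blowup size is near-linear. The heuristic that makes a non-tower bound plausible is that in an incomparability graph, any ``regularity-failing'' pair immediately produces a blowup, so the iteration depth is governed by the forbidden blowup parameter $m$ rather than by $1/\eps$. Turning this heuristic into a rigorous lemma---and ensuring that coordinating the $\binom{k}{2}$ pairwise bicliques in the final step does not inflate the $\log n$ factor into $(\log n)^{\Theta(k)}$---is the technical heart of the argument.
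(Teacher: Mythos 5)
Your reduction of Conjecture~A to the clique-density statement (\cref{thm:main2}) via supersaturation is exactly right, and the general shape --- a homogeneous-partition lemma followed by an assembly step --- matches the paper. But the proposal has a genuine gap at precisely the point you flag as ``the technical heart'': you give no mechanism for avoiding the $(\log n)^{\Theta(\log k)}$ loss. Your plan is to iterate \cref{thm:FPT} inside common neighborhoods; since each application of \cref{thm:FPT} inside a set of size $N$ only returns a biclique of size $N/\log N$, nesting these to build a $K_k$-blowup costs a fresh $\log$ factor per level of nesting, which is exactly how Tomon's original bound $n/(\log n)^{\lceil \log_2 k\rceil}$ arises. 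The observation that common neighborhoods are again incomparability graphs is true but does nothing to prevent this accumulation, and your assertion that only a constant factor is lost per refinement step is unsupported. Relatedly, a partition whose pairs merely ``host a near-linear biclique'' is too weak to assemble: the bicliques between different pairs need not be compatible, which is the coordination problem you acknowledge but do not resolve.

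The paper's route is different in the decisive step. It works with the poset rather than the graph: it takes a linear extension, splits it into $O(1/\eps)$ intervals, and inside each interval repeatedly applies the Fox--Pham dichotomy (\cref{lm:FHT}), which returns either $\ell$ pairwise incomparable sets of size $\Omega(n/(\ell^2\log n))$ --- in which case one is immediately done, and this is the \emph{only} place a $\log n$ is paid --- or a chain $A_1\succ\cdots\succ A_\ell$ of sets of \emph{linear} size. In the surviving case all parts have linear size and, crucially, the pairs are genuinely homogeneous (complete or empty), not merely biclique-containing: a counting argument using the linear extension shows that for two chains in different intervals, at most $2\ell-1$ of the $\ell^2$ cross pairs fail to be comparable-or-incomparable as sets. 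A clique of parts in the reduced structure then yields $K_k[t]$ with $t$ linear outright, with no further logarithmic loss. To repair your argument you would need to replace ``pairs hosting bicliques'' by genuinely complete/empty pairs between linear-size parts, and your proposal contains no idea for producing those.
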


Our next result resolves~\cref{conj:Tomon'sConj} in a stronger form using the weaker clique density condition. The size of the blowup $n/\log n$ below is best possible up to a constant factor by~\cite{fox2006bipartite}.

\begin{restatable}{theorem}{thmincomparabilityrestate} \label{thm:main2}
Let $c \in (0, \frac{1}{2})$, $k\ge 2$ and $n$ be sufficiently large. If $G$ is an $n$-vertex incomparability graph with $\varrho_k(G)\ge c$, then $G$ contains a $K_{k}[\frac{c^4 n}{10^{8}k^{2}\log{n}}]$.
\end{restatable}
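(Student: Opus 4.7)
My plan is to establish Theorem \ref{thm:main2} by induction on $k$, with the Fox--Pach--T\'oth biclique theorem (Theorem \ref{thm:FPT}) as the base case $k=2$, and a regularity-type structural lemma for incomparability graphs doing the heavy lifting in the inductive step.

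For the inductive step, let $G$ be an $n$-vertex incomparability graph with $\varrho_k(G)\ge c$ and assume the theorem for $k-1$. The double-counting identity
\[
    \sum_{v\in V(G)} \#\{K_{k-1}\text{ in }G[N(v)]\} \;=\; k\cdot \#\{K_k\text{ in }G\} \;\ge\; ck\binom{n}{k},
\]
combined with a convexity/averaging argument, produces an anchor vertex $v^\star$ whose $G$-neighborhood satisfies $|N(v^\star)| = \Omega_k(c^{1/(k-1)} n)$ and $\varrho_{k-1}(G[N(v^\star)])\ge c$. Since any induced subgraph of an incomparability graph is itself the incomparability graph of the induced sub-poset, the inductive hypothesis applies to $G[N(v^\star)]$ and yields a blowup $B\cong K_{k-1}[t']$ in $N(v^\star)$ with $t' = \Omega(c^4 n /(k^2 \log n))$. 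Together with $v^\star$, this is already a $K_k$-blowup with $k-1$ large parts and one singleton part; the remaining task is to replace that singleton by a chain of $\Omega(n/\log n)$ vertices, each incomparable in the poset to every vertex of $B$.

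The main obstacle is precisely this ``fattening'' of the anchor $v^\star$, since a priori $v^\star$ could be the only vertex incomparable to all of $B$. To handle it I would deploy the regularity-type lemma advertised in the introduction: if $G$ failed to contain a $K_k[t]$ of the claimed size, then after deleting an $o(n)$ exceptional set, $V(G)$ would admit a partition into $O_{c,k}(1)$ chains of the underlying poset, with the pairwise incomparability structure between distinct parts being the canonical monotone ``staircase'' pattern inherent to two chains of a poset. The density hypothesis $\varrho_k(G)\ge c$ transfers to the $O_{c,k}(1)$-vertex reduced graph and forces a $K_k$ there, isolating $k$ chains among the parts with many cross-part $K_k$'s. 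Applying Theorem \ref{thm:FPT} iteratively to each pair of the selected chains (where the bipartite incomparability between two chains is a monotone staircase, ideally suited to a biclique search) would then produce a $K_k[t]$ of the required size, contradicting the assumption. The genuinely hard step, and where I expect the $c^4/k^2$ dependence of the final bound to be paid for, is proving the regularity lemma itself with only $O_{c,k}(1)$ parts and $o(n)$ exceptional vertices; here the perfectness of incomparability graphs (via Dilworth/Mirsky chain partitions) and the canonical staircase structure of two-chain incomparability are the essential leverage, since generic dense graphs admit no such strong regular partition.
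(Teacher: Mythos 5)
Your proposal rests on a regularity-type lemma that is unfortunately false as stated, and this is the crux of the gap. You claim that if the incomparability graph $G$ of a poset has no $K_k[t]$ of size $t=\Theta(n/\log n)$, then after removing $o(n)$ vertices the poset admits a partition into $O_{c,k}(1)$ \emph{chains}. Here is a counterexample: take the poset on $[\sqrt n]\times[\sqrt n]$ in which $(i,j)\prec(i',j')$ iff $i<i'$ (a ``chain of $\sqrt n$ antichains, each of size $\sqrt n$''). Its incomparability graph is a disjoint union of $\sqrt n$ cliques $K_{\sqrt n}$, which contains no $K_k[t]$ for any $t>\sqrt n/k=o(n/\log n)$, so the hypothesis of your lemma is met. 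Yet the poset has height $\sqrt n$ and $n$ elements, so every chain decomposition needs at least $\sqrt n$ chains, even after deleting $o(n)$ elements. So no $O_{c,k}(1)$-chain partition exists, and the staircase machinery you want to run on pairs of chains has nothing to run on. The structural statement that \emph{is} true (and is what the paper proves as Theorem~\ref{lemma:incomparability-partition}) is that $V(G)$ admits a partition into $O_{c,k}(1)$ parts of equal size where almost all pairs of parts are \emph{homogeneous}, i.e.\ induce a complete or empty bipartite graph; the parts are not chains of the poset, and the lemma is established via the Fox--Pham lemma (Lemma~\ref{lm:FHT}), which extracts a chain of \emph{sets} $A_1\succ\cdots\succ A_\ell$, together with a transitivity argument (Claim~\ref{claim:incomparable-pairs}) bounding the number of inhomogeneous block pairs.

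A secondary issue is that the induction-on-$k$ scaffolding does no work. Once you invoke a partition lemma of the above kind to ``fatten'' the anchor $v^\star$, the theorem follows directly by a three-way count of $K_k$'s (those touching the exceptional set, those with two vertices in one part, those spanning an inhomogeneous pair), with no need for the anchor, the neighborhood blowup, or the inductive hypothesis at all; this direct counting is exactly how the paper derives Theorem~\ref{thm:main2} from Theorem~\ref{lemma:incomparability-partition}. There is also a quantitative wrinkle in your averaging step: the vertex $v^\star$ you find has $|N(v^\star)|=\Omega(c^{1/(k-1)}n)$, so the inductive call on $G[N(v^\star)]$ returns a blowup of size $\Theta(c^4\cdot c^{1/(k-1)}n/(k^2\log n))$, not the claimed $\Theta(c^4 n/(k^2\log n))$; this loss would compound down the recursion. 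Neither of these is as serious as the false chain-decomposition claim, but both reinforce that the right move is to prove the homogeneous-partition lemma once and then count, rather than to induct.
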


To prove Theorem~\ref{thm:main2}, we obtain a strong regularity type lemma for incomparability graphs not containing large blowups of $K_k$. We prove that such graphs admit regular partitions where almost all pairs of clusters are \emph{homogeneous}, that is, they form either complete or empty bipartite graphs. Such regulairty type result is of independent interest. A similar type of partition lemma was recently obtained for intersection graphs of pseudosegments by Fox, Pach, and Suk~\cite{2023FoxPachSuk}.

\begin{theorem} \label{lemma:incomparability-partition}
    Let $k \ge 2$ and $0<\varepsilon<1$. Suppose $n$ is sufficiently large and set $q=\frac{\varepsilon^4 n}{10^5 k^2 \log n}$ and $t = \frac{\varepsilon^7 n}{10^{11} k^5}.$ Let $G$ be an incomparability graph on $n$ vertices with no copy of $K_k[q].$ Then, there exists a partition $V(G) = V_0 \sqcup V_1 \sqcup \cdots \sqcup V_m$ such that $|V_0| \le \varepsilon n,$ $|V_1| = |V_2| = \dots = |V_m| = t$ and all but at most $\varepsilon m^2$ pairs $(V_i, V_j), 1 \le i < j \le m,$ are homogeneous.
\end{theorem}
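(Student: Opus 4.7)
The plan is to construct the partition greedily, peeling off parts $V_1, V_2, \ldots$ from a shrinking residual set $W$. At each step I aim to find a part $V_i$ of size $t$ together with a dichotomous classification of most remaining vertices as either \emph{almost fully adjacent} or \emph{almost fully non-adjacent} to $V_i$, shunting the rare vertices with mixed adjacency into the exceptional set $V_0$. Non-homogeneous pairs $(V_i,V_j)$ then arise only when a vertex of $V_j$ has mixed adjacency to $V_i$, so keeping the mixed fraction small enforces at most $\varepsilon m^2$ non-homogeneous pairs after a final pruning step.

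Concretely, fix an auxiliary parameter $\delta$ of order $\varepsilon^3/k^3$ and initialise $W=V(G)$. While $|W|\ge \varepsilon n/2$, look for $V_i\subseteq W$ with $|V_i|=t$ such that
\[
M(V_i):=\bigl\{v\in W\setminus V_i \,:\, \delta t<|N(v)\cap V_i|<(1-\delta)t\bigr\}
\]
has $|M(V_i)|\le \delta |W|$. If such $V_i$ exists, declare it a part, move $M(V_i)$ into $V_0$, remove $V_i\cup M(V_i)$ from $W$, and iterate; once $|W|<\varepsilon n/2$, sweep the remaining $W$ into $V_0$ as well. A telescoping estimate yields $|V_0|\le \varepsilon n$, and between any two surviving parts $(V_i,V_j)$ all but a $\delta$-fraction of each side is classified consistently, so minor further pruning (absorbed into $V_0$ at the same budget) makes almost every pair strictly homogeneous.

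The heart of the argument is showing such a $V_i$ always exists when $|W|\ge \varepsilon n/2$. Suppose not: then for every $t$-subset $U\subseteq W$, more than $\delta |W|$ vertices of $W$ have mixed adjacency to $U$. Averaging over random $U$, one extracts a large subset $W'\subseteq W$ on which $G[W']$ carries a positive edge density of ``mixed witness'' edges; applying \cref{thm:FPT} to the incomparability graph $G[W']$ yields a biclique $(A_1,B_1)$ with $|A_1|=|B_1|\ge q$. The failure hypothesis persists in the common neighborhood of $A_1\cup B_1$, so within that neighborhood one extracts another pair of classes $(A_2,B_2)$ whose union is fully adjacent to $A_1\cup B_1$. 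Iterating $k-1$ times and finishing with a dependent-random-choice step produces $k$ pairwise-complete classes of size $q$, i.e.\ a copy of $K_k[q]$ in $G$, contradicting the hypothesis.

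The principal technical obstacle is controlling the parameter loss through this iterated biclique extraction. A naive iteration would lose a $\log n$ factor at each application of \cref{thm:FPT}, giving only $\Omega(n/(\log n)^{k-1})$ after $k-1$ rounds, far smaller than the target $q=\Theta(n/\log n)$. To avoid this I would apply \cref{thm:FPT} only once to extract the first biclique of size $\Omega(n/\log n)$, and then use purely combinatorial refinement inside the common neighborhood (powered by dependent random choice and the structural failure hypothesis) to harvest the remaining $k-2$ classes without any additional logarithmic losses. Threading $t$, $q$, and $\delta$ through this schematic so that simultaneously $|V_0|\le \varepsilon n$ and the number of non-homogeneous pairs is at most $\varepsilon m^2$ is the main delicate accounting; in particular, the exponent in $k^5$ appearing in $t$ is needed to absorb the polynomial dependence introduced in the dependent-random-choice step.
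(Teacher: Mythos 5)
Your approach is genuinely different from the paper's, and it contains a gap that is a missing core idea rather than an accounting detail. The paper exploits the poset structure directly: take a linear extension $<'$ of $\prec$, cut the ground set into $s = O(1/\varepsilon)$ consecutive intervals, and inside each interval iteratively apply the Fox--Pham lemma (\cref{lm:FHT}) to carve out long chains $B^1 \prec B^2 \prec \cdots \prec B^\ell$ of size-$t$ blocks; the ``incomparable antichain'' branch of Fox--Pham would immediately produce a $K_k[q]$, so only the chain branch survives. Homogeneity between two chains sitting in distinct intervals then follows from a short parity argument: any two inhomogeneous index-pairs $(u,u')$ and $(v,v')$ must satisfy $u+u' \neq v+v'$, because if $u+u'=v+v'$ the linear-extension ordering together with transitivity of $\prec$ forces $B^{u}\prec B^{u'}$, making that pair comparable and hence homogeneous. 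This caps the number of bad index-pairs per pair of chains at $2\ell-1$, which is enough. No regularity, no dependent random choice, no iterated biclique extraction.

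Your greedy-peeling scheme treats $G$ as an abstract graph and needs to show a good $V_i$ always exists, manufacturing $K_k[q]$ otherwise, and that derivation breaks in at least two places. After one application of \cref{thm:FPT} you get a biclique $(A_1,B_1)$ with $|A_1|=|B_1|=\Theta(n/\log n)$, but you then propose to continue ``inside the common neighborhood of $A_1\cup B_1$.'' The common neighborhood of $\Theta(n/\log n)$ prescribed vertices carries no size guarantee and can easily be empty; dependent random choice controls common neighborhoods of $O(1)$-size random tuples, not of linearly large prescribed sets, so the tool does not apply here. More fundamentally, the step where you ``extract another pair of classes $(A_2,B_2)$ whose union is fully adjacent to $A_1\cup B_1$'' without further logarithmic loss is essentially the content of \cref{thm:main2} itself, which the present lemma is designed to prove; invoking it as a subroutine is circular. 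The ``purely combinatorial refinement'' intended to supply the remaining $k-2$ classes is exactly the missing argument, and nothing in the sketch indicates how to carry it out. The paper sidesteps all of this by never re-extracting bicliques: the only tool used is the poset-specific chain machinery of Fox--Pham, applied once per interval, with homogeneity obtained for free from the transitivity of $\prec$.
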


Next, we consider unions of comparability graphs. Given partial orders $\prec_{1},\ldots,\prec_{r}$ on the same set $P$, the corresponding \emph{$r$-comparability graph} $G=G(\prec_{1},\ldots,\prec_{r})$ is the graph whose vertex set is $P$ and two elements $a,b\in P$ are adjacent if and only if $a\prec_{i} b$ or $b \prec_i a$ holds for some $1\le i\le r$. In other words, $G$ is the union of $r$ comparability graphs $G=\bigcup_{i\in[r]}G(\prec_{i})$.
\begin{restatable}{theorem}{thmmainrestate} \label{thm:main1}
Let $r,h\in \I N$ with $h\ge 2$, $k=(2h-2)^{r}+1$ and $\eps>0$. Let $G=\bigcup_{i\in[r]}G(\prec_{i})$ be an $n$-vertex $r$-comparability graph. If $\varrho_k(G)\ge\eps$, then some $G(\prec_{i})$, $i\in[r]$, contains a copy of $K_{h}[\frac{\varepsilon n}{r}]$.
\end{restatable}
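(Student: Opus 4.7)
The plan is to reduce the problem to a single comparability graph via a pigeonhole argument on height vectors inside each $K_k$, and then apply a splitter double-counting to extract the blowup.

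\emph{Step 1 (height-vector pigeonhole inside each $K_k$).} For any $k$-clique $Q$ in $G$, I will show that for some color $i\in[r]$, the order $\prec_i$ restricted to $Q$ contains a chain of length $2h-1$. For each $v\in Q$ and each $i\in[r]$, let $h_i(v)$ denote the length of the longest $\prec_i$-chain in $Q$ ending at $v$. Since every two distinct $u,v\in Q$ are adjacent in $G$, they are $\prec_i$-comparable for some $i$, which makes the map $v\mapsto(h_1(v),\ldots,h_r(v))$ injective on $Q$. If every $h_i(v)$ were at most $2h-2$, the image would fit inside $[1,2h-2]^r$, forcing $|Q|\le (2h-2)^r<k$, a contradiction.

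\emph{Step 2 (average to a dominant color).} Assign each $k$-clique of $G$ a witness color as in Step 1. By pigeonhole, some $i^*\in[r]$ is assigned to at least $\frac{\varepsilon}{r}\binom{n}{k}$ cliques. Standard double counting, using that a fixed $(2h-1)$-chain sits in at most $\binom{n-(2h-1)}{k-(2h-1)}$ many $k$-cliques, yields
\[
\varrho_{2h-1}\!\left(G(\prec_{i^*})\right)\ \ge\ \frac{\varepsilon}{r\binom{k}{2h-1}}.
\]

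\emph{Step 3 (splitter argument in $G(\prec_{i^*})$).} For each $(h-1)$-chain $u_1\prec_{i^*}\cdots\prec_{i^*}u_{h-1}$ in $\prec_{i^*}$, define the $h$ pairwise disjoint gap sets $I_0=\{w:w\prec_{i^*}u_1\}$, $I_j=\{w:u_j\prec_{i^*}w\prec_{i^*}u_{j+1}\}$ for $1\le j\le h-2$, and $I_{h-1}=\{w:u_{h-1}\prec_{i^*}w\}$. Transitivity of the single order $\prec_{i^*}$ gives that for $j<j'$ every element of $I_j$ lies $\prec_{i^*}$ every element of $I_{j'}$. Each $(2h-1)$-chain in $\prec_{i^*}$ decomposes uniquely into its splitter (even-indexed elements $u_1,\ldots,u_{h-1}$) together with odd-indexed fillers $v_{2j+1}\in I_j$, whence
\[
\sum_{(u_1,\ldots,u_{h-1})}\ \prod_{j=0}^{h-1}|I_j|\ \ge\ \frac{\varepsilon\,\binom{n}{2h-1}}{r\binom{k}{2h-1}}.
\]
Since there are at most $\binom{n}{h-1}$ splitter tuples and $\binom{n}{2h-1}/\binom{n}{h-1}=\Theta_h(n^h)$, some fixed splitter has $\prod_j|I_j|=\Omega_{h,r}(\varepsilon n^h/r)$; combined with $|I_j|\le n$, this forces $\min_j|I_j|=\Omega_{h,r}(\varepsilon n/r)$. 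The $h$ gap sets $I_0,\ldots,I_{h-1}$ then realize the desired $K_h[\Omega(\varepsilon n/r)]$ in $G(\prec_{i^*})$.

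\emph{Main obstacle.} The substantive new idea is Step 1; the per-color height vector is precisely the invariant for which the threshold $(2h-2)^r+1$ becomes tight. Everything afterwards is standard counting, but I want to highlight that reducing to a single order before running the splitter argument is essential: transitivity within one $\prec_{i^*}$ is what converts the product-of-intervals lower bound into a complete multipartite structure, and no such transitivity is available in the full $r$-comparability graph.
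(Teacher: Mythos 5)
Your proof is correct and follows essentially the same route as the paper's: your Step 1 height-vector injectivity is precisely the paper's key claim (their $p(x)\neq p(y)$ argument), Step 2 is their pigeonhole-plus-double-counting reduction to a single order, and Step 3 is their $r=1$ case with the fixed $(h-1)$-tuple of "splitters" and the gap sets $D_1,\dots,D_h$. The only cosmetic differences are the order in which the reduction and the $r=1$ argument are presented and some bookkeeping of constants (your $\Omega_{h,r}(\varepsilon n/r)$ versus the stated $\varepsilon n/r$), at a level of informality matching the paper's own.
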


This strengthens a result of Tomon~\cite{2016OrderTomon} which requires the stronger edge density condition $\varrho_2(G)\ge 1-\frac{1}{(2h-2)^r}+\eps$. The constant $k=(2h-2)^r+1$ in~\cref{thm:main1} is best possible by~\cite[Theorem~4(i)]{2016OrderTomon}. 

Both~\cref{thm:main2,thm:main1} also fall in the theme of ``replacing edge density by clique density''.

\subsubsection{Hereditary graphs and \texorpdfstring{$B_k$}{Bk} property}
Next we consider~\cref{ques:Main} for hereditary families. A family of graphs $\mathcal{G}$ is \emph{hereditary}, if for any graph $G\in \mathcal{G}$ and any induced subgraph $G'$ of $G$, $G'\in \mathcal{G}$. We introduce the following notion of the \emph{$B_{k}$ property}. A notion similar to the $B_{2}$ property has recently been introduced by Fox, Pach, and Suk~\cite{2023FoxPachSuk}. 

\begin{defn-non}
Let $k\ge 2$. A graph class $\mathcal{G}$ has the \emph {$B_k$ property} with function $f: \I R^+\rightarrow \I R^+$ if the following holds. For any $\eps>0$ and $n$-vertex $G\in \mathcal{G}$, if $\varrho_k(G)> \eps$, then $G$ contains a $K_{k}[f(\eps)n]$.
\end{defn-non}

Our next result states that the $B_2$ property implies the $B_k$ property for all $k \ge 2.$

\begin{theorem}\label{thm:B2ImpliesBk}
If a hereditary family of graphs has the $B_2$ property, then it also has the $B_k$ property for all $k \ge 2$.
\end{theorem}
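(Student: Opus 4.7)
The plan is to prove the result by induction on $k$, with the base case $k=2$ given by hypothesis. For the inductive step, assume $\mathcal{G}$ is hereditary with the $B_2$ property and the $B_{k-1}$ property, with respective functions $f_2,f_{k-1}$, and let $G\in\mathcal{G}$ with $|V(G)|=n$ and $\varrho_k(G)>\eps$. From the identity $\sum_{v}\#K_{k-1}(G[N(v)])=k\cdot\#K_k(G)\ge k\eps\binom{n}{k}$ and a Markov-type averaging argument, I first locate a vertex $v$ with $|N(v)|\ge\alpha n$ and $\varrho_{k-1}(G[N(v)])\ge\eps/2$ for some $\alpha=\alpha(\eps,k)>0$ (the lower bound on $|N(v)|$ following from $\binom{|N(v)|}{k-1}\ge\eps\binom{n-1}{k-1}/2$). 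Since $G[N(v)]\in\mathcal{G}$, applying $B_{k-1}$ yields a blowup $K_{k-1}[t]$ inside $N(v)$ with $t=f_{k-1}(\eps/2)|N(v)|=\Omega(n)$ and parts $V_1,\ldots,V_{k-1}$; by construction, $v$ is adjacent to every vertex of $\bigcup_i V_i$.

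The key extension step invokes $B_2$ in one of two configurations. If some part $V_i$ has edge density bounded away from zero, say $\varrho_2(G[V_i])\ge\eps'$ for a constant $\eps'=\eps'(\eps,k)>0$, then $B_2$ applied to $G[V_i]\in\mathcal{G}$ produces a biclique $K_2[s]$ inside $V_i$ with $s=f_2(\eps')t=\Omega(n)$; its two sides $V_i^{a},V_i^{b}$ together with the remaining $V_j$ ($j\ne i$) form $k$ pairwise complete bipartite sets in $G$ (the new pair by the biclique; the other pairs inherited from the original blowup), so we obtain a $K_k[\Omega(n)]$-subgraph. Otherwise, every part has edge density below $\eps'$; letting $W=\{u:u\text{ adjacent to all of }\bigcup_i V_i\}\ni v$ be the common neighborhood of $\bigcup_i V_i$, if $|W|\ge t$ we pick any $t$-subset of $W$ as the $k$-th part, completing $K_k[t]$. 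Crucially, the $k$-th part need not be independent in $G$, since we seek a (not necessarily induced) blowup as a subgraph.

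The main obstacle is the residual scenario in which all parts have edge density below $\eps'$ yet $|W|<t$. My proposed resolution is to iterate: remove $\bigcup V_i\cup W$ from $G$ (at most $kt$ vertices) and rerun the argument on the residual $G'\in\mathcal{G}$. Choosing $t\le\eps n/(2k^2)$ (by shrinking the inductively-found blowup if necessary), we have $\varrho_k(G')\ge\eps/2$, so the inductive machinery applies again. Iterating at most $O_k(1/\eps)$ rounds, either some iteration falls into one of the two good cases above (producing $K_k[\Omega(n)]$), or the cumulative number of removed vertices exceeds a linear fraction of $n$, contradicting the residual $K_k$-density. The technical heart of the argument is the careful parameter bookkeeping showing that the final function $f_k(\eps)$ remains positive and depends polynomially on $\eps$ (with $k$-dependent exponents) and on the given $f_2,f_{k-1}$.
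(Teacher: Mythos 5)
Your first step — a Markov-type averaging to find a vertex $v$ with $|N(v)|\ge\alpha n$ and $\varrho_{k-1}(G[N(v)])\ge\eps/2$ — is correct (choosing $\alpha\approx\eps^{1/(k-1)}$ works; otherwise each $v$ contributes at most $\max(\binom{\alpha n}{k-1},\tfrac{\eps}{2}\binom{n-1}{k-1})$ to $\sum_v \#K_{k-1}(G[N(v)])=k\#K_k(G)$, and the total falls short). Applying the inductive $B_{k-1}$ property inside $N(v)$ is also fine, and your case (a) (some part $V_i$ has $\varrho_2(G[V_i])\ge\eps'$) correctly yields a $K_k[\Omega(n)]$ by splitting $V_i$ with one application of $B_2$. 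The trouble begins with case (c).

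The iteration that is supposed to resolve the residual case does not work. The budget for vertex removal is one-shot: if you remove $r$ vertices from $G$, then $\#K_k(G')\ge\eps\binom{n}{k}-r\binom{n-1}{k-1}$, and the guarantee $\varrho_k(G')\ge\eps/2$ holds exactly when $r\le\frac{\eps n}{2k}$. Capping $t\le\frac{\eps n}{2k^2}$ means a single round removes up to $\frac{\eps n}{2k}$ vertices, which already exhausts this budget — you cannot run a second round with $\varrho_k\ge\eps/2$. If instead you weaken the density requirement each round to $\eps_i=\eps/2^i$, then the $B_{k-1}$-guaranteed blowup size $t_i\ge f_{k-1}(\eps_i/2)\alpha(\eps_i)n_i$ decays with $\eps_i$, and a "good case" reached at round $i$ produces only a $K_k[t_i]$ with $t_i\not=\Omega(n)$. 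In neither reading does the process have a progress measure forcing it to exit case (c) with a linear-size blowup, and there is no contradiction when the cumulative removal exceeds the budget — that merely leaves you with a graph whose $K_k$-density you can no longer control, not with a $K_k[\Omega(n)]$. Moreover, case (b) is an unsupported hope: with all $V_i$ sparse, there is no combinatorial reason the common neighborhood $W$ of a linear-size vertex set $\bigcup V_i$ should be large, so case (c) is in fact the generic situation, and its handling is the crux of the theorem.

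The paper's proof has a genuinely different structure and avoids this pitfall. Rather than inducting on $k$, it applies the regularity lemma to obtain $k$ clusters $V_1,\dots,V_k$ that are pairwise $\eps_0$-regular with density $\ge\eps/2$ (but are \emph{not} complete to each other). If some $V_i$ is locally dense, it builds the blowup entirely inside $V_i$ by iterating $B_2$ $(k-1)$ times. The key insight you are missing is the other case: when all $V_i$ are sparse, the paper picks a sparse subset $U_i\subseteq V_i$ in each cluster and applies $B_2$ to $G[U_i\cup U_j]$. That union is dense (because the regular pair $(U_i,U_j)$ has density $\ge\eps/4$), so $B_2$ gives a large biclique; but since each $U_i$ alone has density $\le\eps_2\ll f(\eps/16)^2\eps_3^2$, the biclique cannot fit inside one side and must straddle $U_i$ and $U_j$. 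Iterating this over all $\binom{k}{2}$ pairs upgrades the regular pairs to genuine complete bipartite pairs simultaneously, yielding $K_k[\Omega(n)]$. In your setup the parts of the $K_{k-1}[t]$ are already complete to each other, so this trick buys nothing, which is exactly why your case (c) has no analogous escape route. To salvage your induction you would need a different idea for producing the $k$-th part; as written, the argument has a real gap.
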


Several classes of graphs are known to have the $B_{2}$ property. For instance, Fox, Pach, and T\'{o}th~\cite{fox2010turan} proved it for intersection graphs of planar convex sets as well as for incomparability graphs of posets with bounded dimension. Although the proof in~\cite{fox2010turan} is quite combinatorial, it is not clear how to adapt it to show e.g., that if the intersection graph of planar convex sets has positive triangle density, then it has a linear size complete tripartite graph, that is, such graphs have the $B_3$ property. Our~\cref{thm:B2ImpliesBk} circumvents ad-hoc family-specific arguments and show that all families with the $B_2$ property also have the $B_k$ property. 

Here we present another hereditary class with the $B_{2}$ property. The \emph{Vapnik-Chervonenkis dimension}, or \emph{VC-dimension} for short, of a graph and, more generally, a set system, is a fundamental parameter that measures its combinatorial  complexity. We defer its formal definition to~\cref{sec:Hereditary}.

\begin{theorem}\label{thm:VCDim1}
   The family of graphs with VC-dimension $d$ has the $B_2$ property if and only if $d=1$.
\end{theorem}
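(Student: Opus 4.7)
The plan handles the two directions separately.

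For the forward direction ($d=1\Rightarrow B_{2}$), the plan is a two-step reduction. Let $G$ have VC-dimension $1$ and $\varrho_{2}(G)\ge\varepsilon$. First apply Haussler's packing lemma to the neighborhood family $\{N(v):v\in V(G)\}$ (which has VC-dim $\le 1$) to partition $V(G)$ into $O(1/\delta)$ clusters whose within-cluster neighborhoods differ by at most $\delta n$ in symmetric difference, for some $\delta\ll\varepsilon$. Averaging finds a cluster $V_{i}$ of size $\Omega(\varepsilon n)$ whose average $G$-degree is $\Omega(\varepsilon n)$; fix $v^{\ast}\in V_{i}$ and let $S=N(v^{\ast})$. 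Haussler closeness then makes the induced bipartite graph $G[V_{i},S]$ of density at least $1/2$ and of VC-dim at most $1$. The core combinatorial step is a biclique lemma: every bipartite $H=(A,B,E)$ of VC-dim $\le 1$ and edge density $\ge c$ contains a $K_{t,t}$ with $t=\Omega_{c}(|A|+|B|)$. The structural input is that in the bipartite setting the pattern $\emptyset$ on any pair $\{a,a'\}\subseteq A$ is already realized by any third vertex of $A$, so avoiding shattering forces one of $N(a)\subseteq N(a')$, $N(a')\subseteq N(a)$, or $N(a)\cap N(a')=\emptyset$ for every pair. Thus $\{N(a):a\in A\}$ is a \emph{laminar} family on $B$, and symmetrically on the $B$-side. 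I would then decompose $B$ into the disjoint maximal (``root'') sets $R_{1},\dots,R_{r}$ of the laminar family, split $A=A_{1}\sqcup\cdots\sqcup A_{r}$ according to which root contains $N(a)$, pick by averaging an index $i^{\ast}$ retaining density $\Omega(c)$ in $(A_{i^{\ast}},R_{i^{\ast}})$, and induct on $|B|$ to conclude.

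For the backward direction ($d\ge 2\not\Rightarrow B_{2}$), the plan is to exhibit, for each $d\ge 2$, an $n$-vertex graph of VC-dim exactly $d$, constant edge density, and biclique number $o(n)$. For $d=2$ I would use a bipartite incidence structure between points and a family of ranges of VC-dimension $2$ (such as intervals or an algebraic analog), calibrated so that density stays positive while the containment constraints cap bicliques at sublinear size; for larger $d$ a product with a fixed VC-dim $2$ factor raises the VC-dimension without affecting the biclique bound.

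The main obstacle is the biclique lemma in the forward direction: laminarity alone does not force a linear-size biclique (perfect matchings are laminar), so the argument must route the constant density lower bound through the recursion on the laminar tree without losing it at each level, which requires careful bookkeeping and the symmetric use of laminarity on both sides.
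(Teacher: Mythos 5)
Your forward direction takes a genuinely different route from the paper (which uses the bounded-VC regularity lemma, \cref{thm:VCdimRegularity}, together with the two local forbidden configurations of \cref{Claim:ForbiddenGraphsVC1} to extract a \emph{chain} of nested neighbourhoods directly), but as written it has two real gaps. First, the laminarity claim: the trichotomy $N(a)\subseteq N(a')$, $N(a')\subseteq N(a)$, or $N(a)\cap N(a')=\emptyset$ requires the empty trace on $\{a,a'\}$ to be realized in $G$, i.e.\ that $a,a'$ have a common non-neighbour. This is automatic if $A$ is independent in $G$, but a Haussler cluster $V_i$ need not be independent, and the standalone bipartite graph $G[V_i,S]$ can have strictly larger VC-dimension than $G$ (e.g.\ $A=\{a_1,a_2\}$ adjacent, $B=\{b_1,b_2,b_3\}$ with $N_B(a_1)=\{b_1,b_3\}$, $N_B(a_2)=\{b_2,b_3\}$ shatters $\{a_1,a_2\}$ in the bipartite graph but not in $G$). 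So pairs of $V_i$ that jointly dominate $V(G)$ escape your trichotomy; this is fixable (restrict to vertices of degree below $n/2$, say, or handle the near-complete case separately as the paper does in its Case~1), but it must be addressed. Second, and more importantly, the biclique lemma is the heart of the proof and you leave it open: the recursion through the roots $R_1,\dots,R_r$ does not work, because the density can concentrate on a block $(A_{i^*},R_{i^*})$ with one side sublinear, and the laminar tree has unbounded depth, so an induction losing a constant factor per level gives nothing. The lemma is in fact true from one-sided laminarity alone, by a one-step argument you should substitute: at least $c|A|/2$ vertices $a$ have $|N(a)|\ge c|B|/2$; the distinct \emph{minimal} sets among these neighbourhoods are pairwise disjoint, hence there are at most $2/c$ of them; pigeonholing, some minimal set $T$ with $|T|\ge c|B|/2$ is contained in $N(a)$ for at least $c^2|A|/4$ vertices $a$, giving the biclique.

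The backward direction is not yet a proof: ``a bipartite incidence structure between points and ranges of VC-dimension $2$, calibrated so that density stays positive while containment constraints cap bicliques at sublinear size'' names a hope, not a graph, and the delicate part --- verifying that the \emph{dense} graph you build still has VC-dimension $2$ --- is exactly what is omitted. (Dense point--interval incidence graphs, for instance, do contain linear bicliques.) The paper's construction is the complement of a random balanced bipartite $C_4$-free graph with $\Theta(n^{4/3})$ edges: the complement is dense, has VC-dimension at most $2$ because any two vertices have at most one common neighbour in the original graph, and has no biclique of size $n^{2/3+o(1)}$ because two linear-sized sets in a random graph of that density must span an edge. For $d>2$ the natural move is a disjoint union with a small gadget of VC-dimension $d$ (which changes neither the density nor the biclique number appreciably); a ``product with a VC-dim $2$ factor'' does not obviously control the VC-dimension.
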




\noindent{\bf Structure of the paper. } In~\cref{sec:Pre}, we provide some useful tools and auxiliary results. The proof of~\cref{thm:induced-intro} is given in~\cref{section:OrderedMonotonePath,section:ConstructionBEGraph}. We prove~\cref{thm:main2,lemma:incomparability-partition,thm:main1} in~\cref{sec:Comparable},
and~\cref{thm:B2ImpliesBk,thm:VCDim1} in~\cref{sec:Hereditary}. Concluding remarks are given in~\cref{sec:conclusion}. 

\section{Tools and some auxiliary results}\label{sec:Pre}

All undirected graphs in this paper are simple. For the sake of clarity of presentation, we omit floor and ceiling signs whenever they are not essential.

Let $G$ be a graph. We will denote the set of vertices of $G$ by $V(G)$ and the set of edges by $E(G)$, and define $|G| := |V(G)|$ and $e(G):=|E(G)|$. For any vertex $v\in G$, we define $N(v)$ as the neighbor of $G$. We say a graph $G$ is \emph{$(\alpha,\beta)$-dense} if for every $S \subseteq V(G)$ with $|S| \ge \alpha |V(G)|$, $e(G[S]) \ge \beta \binom{|S|}{2}$ holds. 

For a graph $G$, let $X$, $Y$ be disjoint subsets of $V(G)$, and $e(X,Y)$ be the number of edges between $X$ and $Y$. We define the density of the pair $(X,Y)$ as $d(X,Y):=\frac{e(X,Y)}{|X|\cdot |Y|}$.
   A pair of vertex sets $X$ and $Y$ is said to be \emph{$\varepsilon$-regular}, if for all subsets $A\subseteq X$, $B\subseteq Y$ satisfying $|A|\geqslant \varepsilon|X|$, $|B|\geqslant \varepsilon|Y|$, we have
   \begin{equation*}
   \left|\frac{e(A,B)}{|A|\cdot|B|}-\frac{e(X,Y)}{|X|\cdot|Y|}\right|=\left|d(A,B)-d(X,Y)\right| \leqslant \varepsilon.
\end{equation*}
   A partition of $V$ into $k+1$ sets $(V_0,V_{1},\ldots,V_{k})$ is called an \emph{$\varepsilon$-regular partition}, if
   \begin{itemize}
     \item $|V_0|<\varepsilon n$ and for all $1\leqslant i<j\leqslant k$ we have $|V_{i}|=|V_{j}|$, and
     \item all except $\varepsilon k^{2}$ of the pairs $V_{i},V_{j}$, $1\leqslant i<j\leqslant k$, are $\varepsilon$-regular.
   \end{itemize}

The famous Szemer\'{e}di's regularity lemma~\cite{1978Regulariy} can be stated as follows.

\begin{lemma}[Szemer\'edi regularity lemma,~\cite{1978Regulariy}]\label{lem:psrl}
  For all $m,\varepsilon >0$, there exists an integer $M$ such that the following holds. If a graph $G$ has $n\geqslant M$ vertices, there exists an integer $K$ satisfying $m \leqslant K \leqslant M$, and  an \emph{$\varepsilon$-regular} partition $\mathcal{P}$ of $V(G)$ with $K+1$ parts.
\end{lemma}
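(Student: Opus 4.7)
The plan is to follow the standard energy-increment proof. For a partition $\mathcal{P} = (V_0, V_1, \ldots, V_k)$ of $V(G)$ (with $V_0$ an exceptional part), define the \emph{index} (or \emph{mean-square density})
\[
q(\mathcal{P}) \;=\; \sum_{1 \le i,j \le k} \frac{|V_i|\,|V_j|}{n^2}\, d(V_i,V_j)^2,
\]
which satisfies $0 \le q(\mathcal{P}) \le 1$. The whole argument rests on showing that whenever a partition fails to be $\varepsilon$-regular, it can be refined so that this quantity jumps up by a fixed polynomial amount in $\varepsilon$.

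The key technical step is a refinement lemma: if a pair $(V_i,V_j)$ is not $\varepsilon$-regular, witnessed by $A \subseteq V_i$, $B \subseteq V_j$ with $|A| \ge \varepsilon |V_i|$, $|B|\ge \varepsilon |V_j|$ and $|d(A,B)-d(V_i,V_j)| > \varepsilon$, then splitting $V_i$ into $\{A, V_i \setminus A\}$ and $V_j$ into $\{B, V_j \setminus B\}$ raises the local contribution to $q$ by at least $\varepsilon^4 \frac{|V_i||V_j|}{n^2}$. This is a straightforward consequence of the defect form of the Cauchy--Schwarz inequality: splitting a set $X = X_1 \sqcup X_2$ always increases the weighted sum of squared densities, and the gain is quantified by the deviation $|d(X_1,B)-d(X,B)|$. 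Applying this simultaneously to all pairs witnessing non-regularity, and then taking a common refinement, produces a new partition $\mathcal{P}'$ with $q(\mathcal{P}') \ge q(\mathcal{P}) + \varepsilon^5$.

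With this increment lemma in hand, I would start from any equitable partition with $m$ parts (adjusting a tiny exceptional set $V_0$ to guarantee equal sizes) and iteratively apply the refinement. Since $q$ is bounded above by $1$, the procedure must terminate within $\varepsilon^{-5}$ iterations, at which point the partition is $\varepsilon$-regular. Each refinement step can multiply the number of parts by at most $2^k$, so the final number of parts $K$ is bounded above by a tower of $2$'s of height $O(\varepsilon^{-5})$; this yields the desired $M = M(m,\varepsilon)$.

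The main technical obstacle is bookkeeping rather than conceptual: one must keep the partition equitable throughout, which requires an extra subdivision step (cutting each refined block into pieces of a common fixed size $t$ and dumping the leftovers into $V_0$) and a verification that doing so degrades the index by less than the $\varepsilon^5$ gain. A clean way to handle this is to first prove the increment lemma without the equitability constraint, and then show that an equitable re-partition of sufficiently fine granularity only perturbs $q$ by $o(\varepsilon^5)$, so that the net progress per step remains at least $\varepsilon^5/2$, preserving the termination bound.
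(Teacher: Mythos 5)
The paper does not prove this lemma; it is quoted as a black box with a citation to Szemer\'edi's original paper. Your proposal is the standard (and correct) energy-increment proof: the mean-square density index, the defect Cauchy--Schwarz refinement step gaining $\varepsilon^5$ per iteration when more than $\varepsilon k^2$ pairs are irregular, termination within $\varepsilon^{-5}$ rounds, and the tower-type bound on the number of parts, with the usual re-equitablization bookkeeping correctly flagged as the only delicate point.
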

We will also use the following simple lemma on the regular pairs.
\begin{lemma}[Slicing lemma,~\cite{komlos1995szemeredi}] \label{lem:slicing}
    Let $(A, B)$ be an $\varepsilon$-regular pair with density $d,$ and, for $\alpha > \varepsilon,$ let $A' \subseteq A$ and $B' \subseteq B$ be subsets with $|A'| \ge \alpha |A|, |B'| \ge \alpha |B|$. Then $(A', B')$ is an $\varepsilon'$-regular pair with $\varepsilon' = \max\{\varepsilon / \alpha, 2 \varepsilon\}$. Moreover, let $d'$ be the edge density of $(A',B')$, we have $|d' - d| < \varepsilon$.    
\end{lemma}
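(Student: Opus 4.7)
The plan is to verify the definition of $\varepsilon'$-regularity for $(A',B')$ directly from the $\varepsilon$-regularity of the ambient pair $(A,B)$. First I would take arbitrary witnesses $A'' \subseteq A'$ and $B'' \subseteq B'$ with $|A''| \ge \varepsilon'|A'|$ and $|B''| \ge \varepsilon'|B'|$, and translate these into relative size bounds with respect to $A$ and $B$. Since $\varepsilon' \ge \varepsilon/\alpha$ and $|A'| \ge \alpha|A|$, we get
\[
|A''| \ge \varepsilon' |A'| \ge \frac{\varepsilon}{\alpha}\cdot \alpha |A| = \varepsilon |A|,
\]
and similarly $|B''| \ge \varepsilon |B|$. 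So $A''$ and $B''$ are legitimate witnesses inside $(A,B)$ for the $\varepsilon$-regularity hypothesis, giving $|d(A'',B'') - d| \le \varepsilon$.

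Next I would apply the same reasoning to the pair $(A', B')$ itself. Because $\alpha > \varepsilon$, we have $|A'| \ge \alpha |A| > \varepsilon |A|$ and $|B'| \ge \varepsilon |B|$, so the $\varepsilon$-regularity of $(A,B)$ implies $|d' - d| = |d(A', B') - d| \le \varepsilon$. This already yields the second (moreover) conclusion of the lemma about the density shift.

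Finally I would combine both estimates via the triangle inequality:
\[
|d(A'',B'') - d'| \le |d(A'',B'') - d| + |d - d'| \le 2\varepsilon \le \varepsilon',
\]
where the last inequality uses $\varepsilon' \ge 2\varepsilon$. Since $A'' \subseteq A'$ and $B'' \subseteq B'$ were arbitrary with the required relative size, this verifies that $(A', B')$ is $\varepsilon'$-regular. There is no real obstacle here beyond being careful with the two cases baked into $\varepsilon' = \max\{\varepsilon/\alpha, 2\varepsilon\}$: the $\varepsilon/\alpha$ branch is what is needed to push the subset size hypothesis back up to the $\varepsilon$-regularity threshold of $(A,B)$, while the $2\varepsilon$ branch is what absorbs the two applications of the regularity bound in the triangle inequality.
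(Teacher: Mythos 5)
Your proof is correct and is exactly the standard argument for this lemma; the paper itself states the slicing lemma without proof, citing Koml\'os--Simonovits, and your two-step verification (pushing the witness sets $A'',B''$ up to the $\varepsilon$-threshold of $(A,B)$ via the $\varepsilon/\alpha$ branch, then absorbing two applications of regularity with the $2\varepsilon$ branch) is the argument given there. The only cosmetic point is that your reasoning yields $|d'-d|\le\varepsilon$ rather than the strict inequality $|d'-d|<\varepsilon$ in the statement, a harmless discrepancy inherited from how the lemma is customarily quoted and irrelevant to every use of it in the paper.
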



We will use multiple times the following simple consequence of the regularity lemma.

\begin{lemma} \label{lem:clique-in-reduced-graph}
    For every integer $k \ge 2,$ and any reals $\eta, \varepsilon > 0,$ there is a real $\gamma > 0$ such that the following holds. Let $G = G_<$ be an ordered graph on $n$ vertices with at least $\eta n^k$ copies of $K_k.$ Then, there are sets $A_1, \dots, A_k \subseteq V(G)$ with $A_1 < A_2 < \dots < A_k$ such that $|A_i| \ge \gamma n$ for all $i \in [k]$ and the pair $(A_i, A_j)$ is $\varepsilon$-regular in $G$ with density at least $\frac{\eta}{2}$ for all $1 \le i < j \le k.$
\end{lemma}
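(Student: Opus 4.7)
The plan is to first reduce to a $k$-partite setting with ordered parts (so the order requirement on $A_1 < \cdots < A_k$ becomes automatic), and then apply Szemer\'edi's regularity lemma to extract regular sub-parts.

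For the reduction, I would fix a sufficiently large integer $L = L(\eta, k)$ and partition $V(G)$ into $L$ consecutive (with respect to $<$) intervals $J_1 < J_2 < \cdots < J_L$ of size $n/L$ each. The number of $K_k$ copies that place two vertices in a common interval is at most $\binom{k}{2} L (n/L)^2 \cdot n^{k-2} = O(n^k/L)$, which is negligible compared to $\eta n^k$ once $L$ is large. By averaging the remaining copies over the $\binom{L}{k}$ strictly increasing tuples of interval indices, there exist $s_1 < \cdots < s_k$ such that the $k$ parts $X_j := J_{s_j}$ of size $N := n/L$ contain at least $\alpha N^k$ copies of $K_k$ having exactly one vertex in each $X_j$ with $v_j \in X_j$, for some $\alpha = \alpha(\eta, k) > 0$ of order $\eta \cdot k!$. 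Note in particular that every pair $(X_j, X_{j'})$ then has edge density at least $\alpha$, since each such $K_k$ contributes one edge across it.

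Next, I would apply the multipartite form of Szemer\'edi's regularity lemma (a standard corollary of \cref{lem:psrl}) to the $k$-partite graph on $X_1 \cup \cdots \cup X_k$ with parameter $\varepsilon_1 = \min\{\varepsilon, \alpha/(10k^2)\}$. This produces a refinement of each $X_j$ into $M$ equal sub-parts $Y_j^1, \ldots, Y_j^M$ such that at most $\varepsilon_1 M^2$ cross pairs $(Y_j^a, Y_{j'}^b)$ (with $j \neq j'$) fail to be $\varepsilon_1$-regular. The $K_k$ copies from the previous step that use some irregular cross pair contribute at most $\binom{k}{2}\varepsilon_1 M^2 (N/M)^2 N^{k-2}=\binom{k}{2}\varepsilon_1 N^k$, and those using some cross pair of density below $\eta/2$ contribute at most $\binom{k}{2}(\eta/2) N^k$. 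The inequality $\alpha > \binom{k}{2} \eta/2$ reduces to $(k-2)! > 1/2$, which holds for all $k \ge 2$, so with our choice of $\varepsilon_1$ the total bad contribution is strictly less than $\alpha N^k$, leaving a positive number of ``good'' $K_k$ copies.

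Pigeonholing over the $M^k$ sub-part assignments $(a_1, \ldots, a_k)$, some tuple $(Y_1^{a_1}, \ldots, Y_k^{a_k})$ supports a good $K_k$ copy, and hence all its $\binom{k}{2}$ cross pairs are $\varepsilon_1 \le \varepsilon$-regular with density at least $\eta/2$. Setting $A_j := Y_j^{a_j}$ and $\gamma := 1/(LM)$ gives $|A_j| = N/M \ge \gamma n$, while the ordering $A_1 < \cdots < A_k$ is immediate from $A_j \subseteq X_j$ together with $X_1 < \cdots < X_k$. The main obstacle is precisely controlling this ordering: a direct application of \cref{lem:psrl} to $G$ yields regular pairs but leaves the parts arbitrarily interleaved in the linear order, which is exactly what the preliminary interval decomposition circumvents.
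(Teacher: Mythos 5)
Your proof is correct, but it takes a genuinely different route from the paper's. You handle the ordering constraint up front: you split $V(G)$ into $L$ consecutive intervals, discard the few $K_k$'s with two vertices in a common interval, average to find intervals $X_1 < \cdots < X_k$ carrying $\Omega_{\eta,k}(N^k)$ transversal copies, and only then apply a $k$-partite regularity lemma inside these parts, so that $A_1 < \cdots < A_k$ is automatic from $A_j \subseteq X_j$. The paper instead applies \cref{lem:psrl} directly to $G$, discards $K_k$'s meeting $V_0$ or using an irregular or sparse pair, and locates $k$ clusters $V_1, \ldots, V_k$ that are pairwise $\varepsilon'$-regular and dense; since these clusters are arbitrarily interleaved in the vertex order, it recovers the ordering afterward by a greedy prefix-slicing procedure (repeatedly cutting $V(G)$ at the smallest vertex where one of the remaining clusters accumulates $n_0/k$ elements) and then invokes the slicing lemma (\cref{lem:slicing}) to retain regularity of the resulting sub-pairs. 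In short, you place the ordering combinatorics before regularity and pay by invoking a multipartite regularity lemma, which is a standard but not immediate corollary of \cref{lem:psrl} and typically also leaves a small exceptional set inside each $X_j$ that your counting should briefly absorb; the paper places the ordering after regularity and pays with the greedy slicing argument. Both routes give a valid $\gamma = \gamma(k, \eta, \varepsilon) > 0$.
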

\begin{proof}
    Clearly we may assume that $\frac{1}{n} \ll \varepsilon \ll \eta\ll \frac{1}{k}$. Let $\varepsilon' = \frac{\varepsilon}{k}$ and $m = \frac{1}{\varepsilon'}$. We apply Lemma~\ref{lem:psrl} to $G$ with parameters $\varepsilon', m$ to obtain an $\varepsilon'$-regular partition $V_0, \dots, V_K.$ Note that there are at most $|V_0| n^{k-1} < \varepsilon n^k$ $k$-cliques in $G$ touching the set $V_0.$ Additionally, there are at most $\varepsilon K^2 |V_1|^2 \cdot n^{k-2} \le \varepsilon n^k$ $k$-cliques with one of the edges across an irregular pair. Finally, there are at most $K^2 |V_1|^2 \cdot \frac{2\eta}{3} \cdot n^{k-2} \le \frac{2\eta}{3} \cdot n^k$ $k$-cliques with one of the edges across pair with density at most $\frac{2\eta}{3}$. Hence, there exist $k$ sets, without loss of generality the sets $V_1, \dots, V_k$, such that each of the pairs $(V_i, V_j)$ is $\varepsilon$-regular with density at least $\frac{2\eta}{3}$.

    Denote $n_0 = |V_1| = \dots = |V_k|.$ We shall find the desired subsets $A_1, \dots, A_k$ with $A_{i}\subseteq V_{i}$ for $i=1,2\ldots,k$. For a vertex $x \in V(G),$ let $V_{\le x} = \{ y \in V(G), \, \vert \, y \le x\}$.

    For $i \in [k],$ set $U^1_i = V_i.$ Then, in steps $t = 1, \dots, k,$ we proceed as follows. Let $x$ be the minimum vertex in $V(G)$ such that for some $i, t \le i \le k,$ we have
    $|U_i^t \cap V_{\le x}| = \frac{n_{0}}{k}$. Without loss of generality, we may assume that $i = t.$ Then, set $A_i = U_i^t \cap V_{\le x}$ and for $j, i+1 \le j \le k,$ define $U^{t+1}_j = U^t_j \setminus V_{\le x}.$ Observe that by definition of $x$, we have $|U^{t+1}_j| \ge |U^t_j| - \frac{n_{0}}{k}$. It is easy to see that the described procedure produces sets $A_1, \dots, A_k$ with $A_1 < A_2 < \dots < A_k, |A_i| = \frac{n_{0}}{k}$ for all $i \in [k]$ and each of $A_i$ is a subset of a distinct set among $V_1, \dots, V_k.$ Finally, by Lemma~\ref{lem:slicing}, it follows that each of the pairs $(A_i, A_j)$ is $\varepsilon$-regular with density at least $\frac{2\eta}{3} - \varepsilon' > \frac{\eta}{2}$, as needed.
\end{proof}

Let $0<\gamma<1$ be a real number, we say a pair of vertex sets $(V_{1},V_{2})$ of $G$ is \emph{$\gamma$-homogeneous} if the density $d(V_1,V_2)$ is either less than $\gamma$ or larger than $1-\gamma$, where in the former case, we call $(V_1,V_2)$ \emph{$\gamma$-sparse}, and in the latter case, we call $(V_1,V_2)$ \emph{$\gamma$-dense}. We call the pair $(V_1, V_2)$ \emph{homogeneous} if $G[V_1, V_2]$ is empty or complete. A partition of $V(G)$ is called \emph{equitable} if every two parts differ in size by at most one.

We will use the following recent result of Fox and Pham~\cite{fox2024multipartite}.

\begin{lemma}[Fox--Pham~\cite{fox2024multipartite}]\label{lm:FHT}
Let $\ell \ge 2$ and $n\ge (100\ell)^5$. Every poset on $n$ elements contains $\ell$ disjoint sets $A_1,\ldots, A_\ell$ such that
\begin{itemize}
\item either $A_1\succ A_2\succ\cdots\succ A_\ell$ and $|A_i| = \frac{n}{10^{4}\ell^{5}}$ for $1\le i\le \ell,$ or
\item $A_i$ are pairwise incomparable and $|A_i| = \frac{n}{40\ell^{2}\log{n}}$ for $1\le i\le \ell$.
\end{itemize}
\end{lemma}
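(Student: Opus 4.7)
The plan is to prove this dichotomy by a two-case analysis on whether the poset $P$ behaves in a chain-like or an antichain-like fashion. For each element $x \in P$, consider the partition $P \setminus \{x\} = U_x \sqcup D_x \sqcup I_x$, where $U_x = \{y : y \succ x\}$, $D_x = \{y : y \prec x\}$, and $I_x$ is the set of elements incomparable to $x$. The key observation is that whenever both $|U_x|$ and $|D_x|$ are large, the pair $U_x \succ D_x$ gives a chain-ordered split, since by transitivity every element of $U_x$ dominates every element of $D_x$ through $x$.

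In the chain-like case, I would attempt to iterate such balanced splits $\ell-1$ times: inside the current subpiece we locate some $x$ whose up-set and down-set each form a substantial fraction of that piece, and split accordingly. After $\ell-1$ splits we obtain $\ell$ chain-ordered sets $A_1 \succ A_2 \succ \cdots \succ A_\ell$. A careful accounting of the fraction lost per split should yield the claimed size $n/(10^4\ell^5)$. An alternative cleaner route would be to fix a linear extension of $P$ and use a quantile argument to pre-select $\ell-1$ threshold elements simultaneously, then use Mirsky-type height decomposition to recover chain-order between the resulting bands.

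In the antichain-like case, the failure of the chain extraction means that essentially every $x$ has $|I_x| = (1 - o(1))n$, so the incomparability graph $\overline{G}$ of $P$ is very dense. I would then apply the Fox--Pach--T\'oth theorem (\cref{thm:FPT}) to $\overline{G}$ to find a $K_2$-blowup of size $\Omega(n/\log n)$, yielding two pairwise incomparable sets. Iterating this extraction $\ell-2$ further times, while at each stage restricting to a subset of the current ground set that remains incomparable to all previously chosen sets and still dense in incomparabilities, should produce $\ell$ pairwise incomparable sets with a loss of a $\mathrm{poly}(\ell)$ factor, matching the bound $n/(40\ell^2\log n)$.

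The main obstacle I anticipate is the chain case: guaranteeing that balanced splitters persist at every recursive step without incurring an exponential blowup in $\ell$. A naive recursive halving yields only $n/2^{\Theta(\ell)}$-sized sets rather than the required $n/\ell^5$. The resolution likely lies in either a more global argument (e.g.\ using a Mirsky antichain decomposition together with a pigeonhole on levels to pre-identify chain-ordered bands), or in sharpening the threshold defining a \emph{balanced} splitter so that failure of chain extraction forces a strong enough incomparability structure to enter the antichain regime with the required quantitative bound.
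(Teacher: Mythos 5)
This lemma is quoted from Fox--Pham and not proved in the paper, so your attempt has to stand on its own; as written it is an outline with the decisive steps missing, and one of its reductions is false. In the chain case you yourself identify the crux: a recursive balanced-splitter argument loses a constant factor of the \emph{current} piece at each of the $\ell-1$ splits, giving sets of size $n/2^{\Theta(\ell)}$ rather than $n/(10^4\ell^5)$. The two fixes you gesture at (a quantile argument on a linear extension, a Mirsky-type decomposition) are not carried out, and this is precisely where the content of the lemma lies --- obtaining $\ell$ chain-ordered sets whose size degrades only polynomially in $\ell$ requires a global argument over the whole poset, not a nested recursion. A proof that says ``the resolution likely lies in\dots'' at its main step is not a proof.

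The antichain case has a concrete error and a concrete gap. The error: from ``no element is a balanced splitter'' you conclude that essentially every $x$ has $|I_x|=(1-o(1))n$. This does not follow --- $\min(|U_x|,|D_x|)$ being small only says one of $U_x,D_x$ is small, and $x$ may still be comparable to almost everything (take two antichains $L_1\prec L_2$ of size $n/2$ with complete comparability between them: every $x$ has $\min(|U_x|,|D_x|)=0$ yet $|I_x|=n/2-1$, not $(1-o(1))n$). Any correct version of this implication needs a quantitative argument you have not supplied. The gap: even granting a dense incomparability graph, iterating \cref{thm:FPT} does not obviously produce $\ell$ \emph{pairwise completely} incomparable sets with only a single $\log n$ in the denominator. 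After extracting $A_1,A_2$, the third set must be completely incomparable to both, which is another biclique extraction inside a bipartite incomparability pattern; done naively this multiplies in a fresh $c(\eps)/\log n$ factor at every round, yielding $n/(\log n)^{\ell-1}$. Getting $n/(40\ell^2\log n)$ --- a single logarithm, uniformly in $\ell$ --- is exactly the nontrivial multipartite strengthening that Fox and Pham prove, so it cannot be waved in as ``a $\mathrm{poly}(\ell)$ loss.'' (The explicit constants $10^4\ell^5$ and $40\ell^2$ also cannot come out of the black-box $c(\eps)$ in \cref{thm:FPT}, though that is secondary.)
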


For a given $h$-dimensional unit sphere $\mathbb{S}^h$, we write $\lambda$ for the Lebesgue measure, which is normalized such that the unit sphere has Lebesgue measure $1$. For two subsets $A,B$ of a unit sphere, we define the Euclidean distance between them to be $d_{\max}(A,B):=\sup\{|\boldsymbol{a}-\boldsymbol{b}|:\boldsymbol{a}\in A,\boldsymbol{b}\in B\}$.
We also define the diameter of $A$ as $\textup{diam}(A):=d_{\max}(A,A)$.

We will take advantage of the following lemma in~\cite{2021LiuReiherRT}.
\begin{lemma}[\cite{2021LiuReiherRT}]\label{lemma:LargeMeasureToLargeDistance}
    Let $\mu\in (0,1)$ and $A,B\subseteq \mathbb{S}^{k-1}$ with $\lambda(A),\lambda(B)>e^{-\frac{k\mu}{2}}$, then $d_{\max}(A,B)\ge 2-\mu$. 
\end{lemma}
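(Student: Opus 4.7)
The plan is to argue by contradiction: assume $d_{\max}(A,B) < 2-\mu$, so for all $a\in A$, $b\in B$ we have $|a-b|^{2} < (2-\mu)^{2}$, which rearranges (using $|a|=|b|=1$) to $\langle a,b\rangle > -1+2\mu-\mu^{2}/2$. The first move is to reduce the problem to a \emph{nearness} question between $A$ and the antipodal copy $B':= -B$, which has $\lambda(B') = \lambda(B) > e^{-k\mu/2}$ since $\lambda$ is invariant under $x\mapsto -x$. Setting $b=-b'$, the assumption becomes $\langle a,b'\rangle < 1-2\mu+\mu^{2}/2$ for every $a\in A$, $b'\in B'$, equivalently $|a-b'|^{2} > \mu(4-\mu)$. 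Thus $A$ and $B'$ are separated in Euclidean distance by strictly more than $\sqrt{\mu(4-\mu)}$.

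The main step would then be to apply the spherical isoperimetric inequality to produce close points in $A$ and $B'$, contradicting this separation. Set $r:= \arcsin\sqrt{\mu-\mu^{2}/4}$, and consider the $r$-angular-neighborhoods $A^{(r)}$ and $(B')^{(r)}$. Any common point $y\in A^{(r)}\cap (B')^{(r)}$ is within angular distance $r$ of some $a\in A$ and some $b'\in B'$, forcing $|a-b'|\le 2\sin r = \sqrt{\mu(4-\mu)}$ and yielding the desired contradiction. By isoperimetry, $\lambda(A^{(r)})\ge \lambda(C^{(r)})$ where $C$ is a spherical cap with $\lambda(C)=\lambda(A)$. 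Using the standard asymptotic $\lambda(\{y : \langle y,v\rangle\ge h\}) \sim (1-h^{2})^{(k-1)/2}/(h\sqrt{2\pi k})$, a cap of measure $e^{-k\mu/2}$ has height $h_{0}\approx\sqrt{1-e^{-\mu}}$ and hence angular radius $\theta_{0} = \arcsin(e^{-\mu/2})$. The elementary identity $\sin^{2}\theta_{0} + \sin^{2}r = e^{-\mu} + \mu - \mu^{2}/4$ together with the calculus inequality $e^{-\mu}+\mu-\mu^{2}/4 > 1$ on $(0,1)$ (a routine Taylor/monotonicity check on $g(\mu):=e^{-\mu}+\mu-\mu^{2}/4-1$, which has $g(0)=0$ and $g'>0$ throughout $(0,1]$) shows $\theta_{0}+r>\pi/2$, so $C^{(r)}$ strictly covers more than a hemisphere. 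Hence $\lambda(A^{(r)})>\tfrac{1}{2}$, analogously $\lambda((B')^{(r)})>\tfrac{1}{2}$, and their measures sum to strictly more than $1$, producing the required intersection point.

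The main obstacle I expect is keeping all the inequalities strict: the bound $\theta_{0}+r > \pi/2$ degenerates at $\mu=0$, so one cannot afford the loose cap bound $\lambda(\{\langle y,v\rangle\ge h\})\le e^{-kh^{2}/2}$ in isolation. A sharper two-sided estimate on cap volumes, combined with the strict hypothesis $\lambda(A),\lambda(B) > e^{-k\mu/2}$ to absorb the $o(1)$ corrections arising for finite $k$, should make the argument rigorous. An appealing alternative framing, aligned with the abstract's mention of the isodiametric inequality, is to observe that for each $a\in A$ the entire set $B$ lies in the spherical cap $\{y : \langle y,a\rangle \ge -1+2\mu-\mu^{2}/2\}$; intersecting over $a\in A$ gives $B\subseteq \mathbb{S}^{k-1}\setminus (-A)^{(\theta^{*})}$ for the angular radius $\theta^{*}=\arccos(1-2\mu+\mu^{2}/2)=2r$, and the same cap-enlargement estimate bounds the latter complement by $e^{-k\mu/2}$, directly contradicting $\lambda(B)>e^{-k\mu/2}$.
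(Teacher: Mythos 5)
The paper does not prove \cref{lemma:LargeMeasureToLargeDistance}; it imports it verbatim from~\cite{2021LiuReiherRT}, so there is no in-paper proof to compare against, and your attempt has to be judged on its own. Your architecture is the standard one for statements of this type and is sound in outline: pass to $-B$, observe that $d_{\max}(A,B)<2-\mu$ forces $A$ and $-B$ to be separated in Euclidean distance by more than $\sqrt{\mu(4-\mu)}$, and use L\'evy's spherical isoperimetric inequality to show that the angular $r$-neighbourhoods of $A$ and $-B$ each exceed a hemisphere and therefore meet. The algebraic reduction, the choice $r=\arcsin\sqrt{\mu-\mu^2/4}$, the triangle-inequality step producing $|a-b'|\le 2\sin r$, and the implication $\sin^2\theta_0+\sin^2 r>1\Rightarrow \theta_0+r>\pi/2$ are all correct, as is the calculus fact $e^{-\mu}+\mu-\mu^2/4>1$ on $(0,1]$.

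The genuine gap is the one step that carries essentially all of the content of the lemma: you must show, \emph{uniformly in $k$}, that a spherical cap of measure exceeding $e^{-k\mu/2}$ has height parameter $h_0<\sqrt{\mu-\mu^2/4}$ (equivalently, that a cap of height parameter $h\ge\sqrt{\mu-\mu^2/4}$ has measure at most $e^{-k\mu/2}$). You derive $\sin\theta_0\approx e^{-\mu/2}$ from an asymptotic valid for fixed $\mu$ as $k\to\infty$, and then concede that a "sharper two-sided estimate \dots should make the argument rigorous." This is not a routine patch, because the constant $e^{-k\mu/2}$ is tight enough that the obvious bounds fail. As you yourself note, the bound of \cref{lem:cap-UB} gives only $e^{-k(\mu-\mu^2/4)/2}>e^{-k\mu/2}$, which is in the wrong direction. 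The sharper estimate $\lambda\le (1-h^2)^{(k-1)/2}/\bigl(h\sqrt{2\pi k}\bigr)$ yields, with $1-h^2=(1-\mu/2)^2$, a bound of the form $e^{-k\mu/2}\cdot e^{-k\mu^2/8+O(\mu)}/\bigl(h\sqrt{2\pi k}\bigr)$, and the prefactor $1/\bigl(h\sqrt{2\pi k}\bigr)=\Theta\bigl(1/\sqrt{k\mu}\bigr)$ only drops below $1$ once $k\mu$ exceeds an absolute constant; in the complementary regime $k\mu=O(1)$ one needs a separate argument (there $\lambda(A),\lambda(B)$ are bounded below by an absolute constant while the target distance $2-\mu$ is within $O(1/k)$ of $2$, and the numerical margins are very thin). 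So what you have written establishes a softened version of the lemma, with $e^{-k\mu/2}$ replaced by a slightly smaller threshold, but not the stated inequality; the missing cap-volume computation is exactly where the work lies.
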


A \emph{spherical cap} is the smaller intersection of the unit sphere with a half-space. Given a spherical cap $C$ bounded by some hyperplane $H$, we call the point in $C$ with maximum Euclidean distance to $H$ the \emph{centre} of the spherical cap. The distance from the centre to $H$ is the \emph{height} of the spherical cap and the diameter of $C$ is the diameter of the intersection of $C$ and $H$. 

We will use the following lower and upper bounds on the measure of spherical caps.

\begin{lemma}[\cite{2021LiuReiherRT}]\label{lemma:Measure}
	For all $\delta>0$ and integers $k\ge 3$ , let $B\subseteq \mathbb{S}^{k-1}$ be the spherical cap consisting of all points with distance at most $\sqrt{2}-\delta/\sqrt{k}$ from a fixed point in $\mathbb{S}^{k-1}$. Then $\lambda(B)\ge 1/2-\sqrt{2}\delta$.
\end{lemma}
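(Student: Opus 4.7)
The plan is to parametrize $\mathbb{S}^{k-1}$ by the height coordinate along the axis through $p$, which reduces the question to a one-dimensional integral estimate. Write $t = \langle x, p\rangle$ for $x \in \mathbb{S}^{k-1}$; since $|x|=|p|=1$, we have $|x-p|^{2} = 2 - 2t$, so the cap condition $|x-p| \le \sqrt{2} - \delta/\sqrt{k}$ is equivalent to $t \ge h$, where
\[
h := \frac{\sqrt{2}\,\delta}{\sqrt{k}} - \frac{\delta^{2}}{2k} \;\le\; \frac{\sqrt{2}\,\delta}{\sqrt{k}}.
\]
Thus $B = \{x \in \mathbb{S}^{k-1} : t \ge h\}$ is the closed upper hemisphere $\{t \ge 0\}$ (which has normalized measure exactly $1/2$ by the reflection symmetry $x \mapsto -x$) with the equatorial band $\mathcal{E} = \{x : 0 \le t \le h\}$ removed. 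Hence $\lambda(B) = \tfrac{1}{2} - \lambda(\mathcal{E})$, so the lemma reduces to showing $\lambda(\mathcal{E}) \le \sqrt{2}\,\delta$.

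For the second step I would invoke the standard fact that the pushforward of $\lambda$ under the linear functional $x \mapsto \langle x, p\rangle$ has density $c_{k}(1-t^{2})^{(k-3)/2}$ on $[-1,1]$, with normalizing constant $c_{k} = \Gamma(k/2)/\bigl(\sqrt{\pi}\,\Gamma((k-1)/2)\bigr)$. Using the trivial pointwise bound $(1-t^{2})^{(k-3)/2} \le 1$ on $[0,h]$,
\[
\lambda(\mathcal{E}) \;=\; c_{k}\int_{0}^{h}(1-t^{2})^{(k-3)/2}\,dt \;\le\; c_{k}\,h.
\]
Gautschi's (equivalently Wendel's) inequality then gives $\Gamma(k/2)/\Gamma((k-1)/2) \le \sqrt{k/2}$, and hence $c_{k} \le \sqrt{k/(2\pi)}$. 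Combining these,
\[
\lambda(\mathcal{E}) \;\le\; \sqrt{\tfrac{k}{2\pi}}\cdot \tfrac{\sqrt{2}\,\delta}{\sqrt{k}} \;=\; \tfrac{\delta}{\sqrt{\pi}} \;<\; \sqrt{2}\,\delta,
\]
which is the required estimate on the band, completing the proof.

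The one non-routine input is the uniform upper bound on $c_{k}$, which is equivalent to a lower bound of order $1/\sqrt{k}$ on the Wallis integral $\int_{0}^{\pi/2}\cos^{k-2}\phi\,d\phi$. This is standard and can be established uniformly for $k\ge 3$ via Stirling's formula or the elementary comparison $\int_{0}^{\pi/2}\cos^{k-2}\phi\,d\phi \ge \int_{0}^{1/\sqrt{k}}(1-\phi^{2})^{(k-2)/2}\,d\phi \gtrsim 1/\sqrt{k}$. The comfortable slack between the bound $\delta/\sqrt{\pi}$ and the claimed $\sqrt{2}\,\delta$ also confirms that the pointwise estimate $(1-t^{2})^{(k-3)/2}\le 1$ on the band is only lossy by a bounded factor, so no finer analysis is needed.
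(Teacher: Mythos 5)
Your proof is correct. Note that the paper itself does not prove this lemma --- it is imported verbatim from the cited reference --- so there is no internal argument to compare against; your derivation is the standard one (reduce to the height coordinate, use the pushforward density $c_k(1-t^2)^{(k-3)/2}$, and bound $c_k\le\sqrt{k/(2\pi)}$ via Wendel/Gautschi), every step checks out, and the hypothesis $k\ge 3$ is exactly what makes the pointwise bound $(1-t^2)^{(k-3)/2}\le 1$ valid. The only (harmless) loose end is the degenerate regime $\delta>\sqrt{2k}$, where $\sqrt{2}-\delta/\sqrt{k}<0$ so squaring the distance inequality is not legitimate; there the cap is empty, but the claimed lower bound $1/2-\sqrt{2}\delta$ is then negative, so the lemma holds trivially.
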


\begin{lemma}[\cite{2012AMMCaps}]\label{lem:cap-UB}
	Let $\alpha\in [0,1)$ and $C\subseteq \mathbb{S}^{k-1}$ be a spherical cap with height $1-\alpha$. Then $\lambda(C)\le e^{-k\alpha^2/2}$.
\end{lemma}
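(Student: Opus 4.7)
The plan is to prove this standard spherical concentration inequality via a Chernoff-type moment generating function argument. Rotating if necessary, assume without loss of generality that $C = \{x \in \mathbb{S}^{k-1} : \langle x, e_1 \rangle \geq \alpha\}$, so that for $X$ uniformly distributed on $\mathbb{S}^{k-1}$ we have $\lambda(C) = \Pr[X_1 \geq \alpha]$. Applying Markov's inequality to $e^{tX_1}$ gives, for every $t > 0$,
\[
\lambda(C) \;\leq\; e^{-t\alpha}\, \mathbb{E}\bigl[e^{tX_1}\bigr].
\]
It therefore suffices to establish the sub-Gaussian bound $\mathbb{E}[e^{tX_1}] \leq e^{t^2/(2k)}$; choosing $t = k\alpha$ then yields the desired estimate $\lambda(C) \leq e^{-k\alpha^2/2}$.

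For the MGF bound, the plan is to use the Gaussian representation $X = Z/|Z|$, where $Z = (Z_1, \ldots, Z_k)$ has independent $N(0,1)$ coordinates; crucially, $X$ and $|Z|$ are then independent. Writing $Z_1 = |Z| \cdot X_1$ and taking $2m$-th moments, one obtains
\[
\mathbb{E}\bigl[X_1^{2m}\bigr] \;=\; \frac{\mathbb{E}[Z_1^{2m}]}{\mathbb{E}[|Z|^{2m}]} \;=\; \frac{(2m-1)!!}{k(k+2)\cdots(k+2m-2)} \;\leq\; \frac{(2m-1)!!}{k^m},
\]
using the standard Gaussian moments $\mathbb{E}[Z_1^{2m}] = (2m-1)!!$ and the chi-squared moment identity for $|Z|^2 \sim \chi_k^2$. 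Odd moments of $X_1$ vanish by symmetry, and the classical identity $(2m-1)!!/(2m)! = 1/(2^m m!)$ shows that the resulting power series for $M(t) := \mathbb{E}[e^{tX_1}]$ is dominated term-by-term by the power series of $e^{t^2/(2k)}$, which finishes the proof.

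The principal technical difficulty is keeping the sharp constant $k$ (as opposed to $k-1$ or $k-3$) in the exponent. A more naive approach using the explicit density of $X_1$, which is proportional to $(1-s^2)^{(k-3)/2}$ on $[-1,1]$, together with the pointwise bound $(1-s^2)^{(k-3)/2} \leq e^{-(k-3)s^2/2}$, loses a couple of dimensions and only yields $e^{-(k-3)\alpha^2/2}$; moreover the normalizing constant in the denominator would then have to be estimated carefully so as not to lose further factors. The moment identity above sidesteps both issues by directly comparing the spherical even moments with those of a centered Gaussian of variance $1/k$, whose moment generating function is exactly $e^{t^2/(2k)}$. An alternative route would be to invoke the Lévy--Borell spherical isoperimetric inequality (or the log-Sobolev inequality on the sphere), but the moment approach feels cleanest and completely self-contained.
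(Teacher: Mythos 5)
Your argument is correct: the reduction $\lambda(C)=\Pr[X_1\ge\alpha]$ matches the paper's definition of height, the identity $\mathbb{E}[X_1^{2m}]=(2m-1)!!/\bigl(k(k+2)\cdots(k+2m-2)\bigr)$ via the Gaussian representation $X=Z/|Z|$ is right, and the term-by-term comparison with $e^{t^2/(2k)}$ followed by Chernoff at $t=k\alpha$ gives exactly $e^{-k\alpha^2/2}$. The paper does not prove this lemma at all (it is imported as a citation to an external source), so there is no in-paper argument to compare against; your moment-based derivation is a valid, self-contained proof that moreover keeps the sharp dimension constant $k$, avoiding the loss one would incur from the naive pointwise bound on the density $(1-s^2)^{(k-3)/2}$.
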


Recall that a spherical cap with height $1-\alpha$ has diameter $2\sqrt{1-\alpha^2}$.

The following folklore result partitions the sphere into small pieces of equal measure (see e.g.,~\cite{2002RSAPartition}).
\vskip -0.25em
\begin{lemma}\label{lem: partition-sphere}
	There exists $C>0$ such that the following holds. Let $0<\delta<1$ and $n\ge (C/\delta)^k$. Then $\mathbb{S}^{k-1}$ can be partitioned into $n$ pieces of equal measure, each of diameter at most $\delta$.
\end{lemma}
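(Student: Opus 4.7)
The plan is to proceed by induction on $k$, using a standard zonal (latitude--longitude) construction. The base case $k=2$ is immediate: I partition the circle $\mathbb{S}^{1}$ into $n$ arcs of equal length $2\pi/n$, each of Euclidean diameter at most $2\pi/n\le\delta$ once $n\ge 2\pi/\delta$, so $C=2\pi$ suffices at this stage.

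For the inductive step on $\mathbb{S}^{k-1}$ with $k\ge 3$, I fix the direction $e_{1}$ and decompose the sphere into latitude belts $B_{j}=\{x\in\mathbb{S}^{k-1}:x_{1}\in[t_{j-1},t_{j}]\}$ for $-1=t_{0}<t_{1}<\cdots<t_{M}=1$, where $M=\Theta(\delta^{-(k-1)})$. The boundaries $t_{j}$ are chosen so that each belt has Euclidean extent in the $e_{1}$-direction at most $\delta/2$ and measure $\lambda(B_{j})=k_{j}/n$ for positive integers $k_{j}$ summing to $n$; this can be arranged by an elementary continuity/perturbation argument on the $t_{j}$, with perturbations of order $1/n\ll\delta$. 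Up to a bi-Lipschitz reparametrization, each belt $B_{j}$ is the product $\mathbb{S}^{k-2}\times[t_{j-1},t_{j}]$, scaled in the longitudinal factor by $\sqrt{1-x_{1}^{2}}\le 1$. I then apply the inductive hypothesis to $\mathbb{S}^{k-2}$ with parameter $\delta/2$ to obtain a partition of the longitudinal factor into $k_{j}$ equal-measure cells of diameter at most $\delta/2$, and pull back to get $k_{j}$ sub-pieces of $B_{j}$, each of measure exactly $1/n$ and Euclidean diameter at most $\sqrt{(\delta/2)^{2}+(\delta/2)^{2}}<\delta$.

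The main obstacle will be handling the two polar caps, where the longitudinal decomposition degenerates, and simultaneously ensuring that the zone measures are exact integer multiples of $1/n$. I plan to treat the polar caps as single cells directly, choosing the outermost boundaries $t_{1},t_{M-1}$ within Euclidean distance $O(\delta)$ of $\pm 1$ so that each cap has diameter $O(\delta)$ and measure $O(\delta^{k-1})\ll 1/M$, consistent with the rest. The slack in the hypothesis $n\ge(C/\delta)^{k}$, versus the tighter $(C/\delta)^{k-1}$ that would already suffice for the existence of an equal-area partition with diameter $O(\delta)$, leaves ample room to absorb rounding errors across belts so that the total count is exactly $n$. Alternatively, one can invoke any existing construction of an equal-area partition of $\mathbb{S}^{k-1}$ with diameter bound $O(n^{-1/(k-1)})$---for instance, Leopardi's recursive zonal equal-area partition---which directly yields the claim under the weaker hypothesis $n\ge(C/\delta)^{k-1}$.
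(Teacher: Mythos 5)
The paper does not prove this lemma; it is stated as folklore with a pointer to \cite{2002RSAPartition}, so your fallback suggestion (invoke a ready-made equal-area partition) is precisely the paper's own approach.

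Your inductive sketch has a real quantitative gap, and it matters here because the lemma is applied in \cref{section:ConstructionBEGraph} with $k=h=\log n/\log\log n$ growing with $n$: chasing through the parameters of that application ($\Theta(n)$ pieces of diameter $\Theta(\eps/\sqrt{h})$ on $\mathbb{S}^{h-1}$), one finds that the implicit constant $C$ must be at most roughly $\sqrt k$, in particular not exponential in $k$. Splitting the budget evenly as $\delta_{\mathrm{lat}}\approx\delta/2$, $\delta_{\mathrm{long}}\approx\delta/2$ and invoking the hypothesis on $\mathbb{S}^{k-2}$ at scale $\delta/2$ breaks this: even for the equatorial belt, which carries about $n\delta$ cells, the hypothesis demands $n\delta\ge(2C/\delta)^{k-1}$, which together with $n\ge(C/\delta)^k$ forces $C\gtrsim 2^{k}$, and the factor of two compounds through the $k-2$ levels of recursion. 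Two subsidiary slips point the same way: the stated belt count $M=\Theta(\delta^{-(k-1)})$ would make each belt carry only about $C^k/\delta\ll(2C/\delta)^{k-1}$ cells; and belts adjacent to the polar caps carry far fewer cells than the induction hypothesis requires, which can only be saved by noting that the longitudinal $\mathbb{S}^{k-2}$ factor there is scaled by $\sin\phi\ll1$, relaxing the needed resolution to $\delta/\sin\phi$. The standard fix is to spend only $\delta/\sqrt k$ of the diameter budget on latitude (so $M=\Theta(\sqrt k/\delta)$) and $\approx\delta\sqrt{1-1/k}$ on longitude, so that the parameter decays through the recursion by the telescoping factor $\sqrt{(k-j)/k}$ rather than $2^{-j}$, and to combine this with the per-belt $\sin\phi$-rescaling; this closes the recursion with a universal $C$. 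If instead you cite Leopardi or \cite{2002RSAPartition}, check that the constant there is at most polynomial in $k$, since the application lets $k\to\infty$ with $n$.
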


We also need the following geometric property, which plays a key role in the original Bollob\'{a}s--Erd\H{o}s graph~\cite{1976BE}.
\begin{theorem}\label{thm:BEIsK4free}
    For all $k\in\mathbb{N}$ and all $0<\mu<\frac{1}{4}$, there do not exist four points $p_{1},p_{2},q_{1},q_{2}\in\mathbb{S}^{k}$ such that $|p_{1}-p_{2}|\ge 2-\mu$, $|q_{1}-q_{2}|\ge 2-\mu$ and $|p_{i}-q_{j}|\le\sqrt{2}-\mu$ for all $i,j\in [2]$.
\end{theorem}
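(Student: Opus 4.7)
The plan is to convert the four distance conditions into inner product conditions via the identity $|u-v|^{2}=2-2\langle u,v\rangle$ valid for unit vectors $u,v\in\mathbb{S}^{k}$, and then apply Cauchy--Schwarz to the midpoint vectors $p_{1}+p_{2}$ and $q_{1}+q_{2}$. Assume for contradiction that such $p_{1},p_{2},q_{1},q_{2}$ exist.

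First I would use the near-antipodality conditions to bound the norms of the midpoints. From $|p_{1}-p_{2}|\ge 2-\mu$ one gets $\langle p_{1},p_{2}\rangle\le -1+2\mu-\mu^{2}/2$, so that
\[
|p_{1}+p_{2}|^{2}=2+2\langle p_{1},p_{2}\rangle\le 4\mu-\mu^{2},
\]
and analogously $|q_{1}+q_{2}|^{2}\le 4\mu-\mu^{2}$. Next, the four cross-distance bounds $|p_{i}-q_{j}|\le\sqrt{2}-\mu$ translate to $\langle p_{i},q_{j}\rangle\ge \sqrt{2}\mu-\mu^{2}/2$ for all $i,j\in[2]$, and summing over the four pairs gives
\[
\langle p_{1}+p_{2},\,q_{1}+q_{2}\rangle=\sum_{i,j\in[2]}\langle p_{i},q_{j}\rangle\ge 4\sqrt{2}\mu-2\mu^{2}.
\]

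Now applying Cauchy--Schwarz together with the two norm bounds yields
\[
4\sqrt{2}\mu-2\mu^{2}\le \langle p_{1}+p_{2},q_{1}+q_{2}\rangle\le |p_{1}+p_{2}|\cdot|q_{1}+q_{2}|\le 4\mu-\mu^{2}.
\]
Rearranging gives $4(\sqrt{2}-1)\mu\le \mu^{2}$, i.e. $\mu\ge 4(\sqrt{2}-1)>1$, which flatly contradicts the hypothesis $\mu<\tfrac{1}{4}$.

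There is no real obstacle here; the only point demanding care is to track the $\mu^{2}$ error terms so that the final inequality comes out with the correct sign. The key conceptual step is recognising that the midpoint vectors $p_{1}+p_{2}$ and $q_{1}+q_{2}$ have small norm (because $p_{1},p_{2}$ are nearly antipodal, and likewise for the $q$'s), while the cross-conditions force their inner product to be comparatively large, which is incompatible via Cauchy--Schwarz.
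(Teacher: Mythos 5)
Your proof is correct, and the arithmetic checks out at every step: $|p_1+p_2|^2, |q_1+q_2|^2 \le 4\mu-\mu^2$, $\langle p_1+p_2, q_1+q_2\rangle \ge 4\sqrt{2}\mu - 2\mu^2$, Cauchy--Schwarz gives $4\sqrt{2}\mu - 2\mu^2 \le 4\mu - \mu^2$, hence $\mu \ge 4(\sqrt{2}-1) > 1$, contradicting $\mu < 1/4$. Note that the paper does not supply its own proof of this statement; it simply cites the Bollob\'{a}s--Erd\H{o}s construction~\cite{1976BE}, so there is no in-paper argument to compare against. For reference, the classical proof in the literature runs through the polarization identity
\[
\sum_{i,j\in[2]}|p_i-q_j|^{2} \;=\; |p_1-p_2|^{2}+|q_1-q_2|^{2}+\bigl|(p_1+p_2)-(q_1+q_2)\bigr|^{2},
\]
which after dropping the nonnegative last term gives $8-8\sqrt{2}\mu+4\mu^{2}\ge 8-8\mu+2\mu^{2}$ and the same contradiction. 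Your inner-product/Cauchy--Schwarz route is an equivalent repackaging of this identity (Cauchy--Schwarz on $p_1+p_2$ and $q_1+q_2$ is exactly what the identity exploits), so the two are morally the same argument; either is a perfectly acceptable, self-contained proof.
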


\section{Upper bound for Theorem~\ref{thm:induced-intro}}\label{section:OrderedMonotonePath}
In this section we prove the upper bound on $g(k)$ in~\cref{thm:induced-intro}. To achieve this, we first introduce a framework and a Ramsey variant based on some auxiliary directed graphs.

\subsection{A Ramsey problem}

\begin{defn} \label{def:dependency-digraph}
    Let $\chi$ be a red-blue edge-coloring of the ordered clique with vertices $v_1 < v_2 < \dots < v_k.$ Its \emph{dependency digraph} $D = D(\chi)$ is defined on the vertex set $\{v_1, \dots, v_k\}$ as follows. For every $i \in [k-1]$ and $j \in [k] \setminus \{i, i+1\},$ if $\chi(v_iv_{i+1}) = \mathrm{red}$ and $\chi(v_iv_j) = \chi(v_{i+1}v_j) = \mathrm{blue}$, then $D$ contains the directed edges $(v_j, v_i)$ and $(v_j, v_{i+1}).$ We say that $\chi$ is \emph{admissible} if $D(\chi)$ is acyclic.

    For $k \ge 1,$ let $f(k)$ denote the minimum integer $N$ such that in any red-blue edge-coloring of the ordered clique on $N$ vertices, there exist $k$ vertices such that the coloring induced on these $k$ vertices is admissible.
\end{defn}

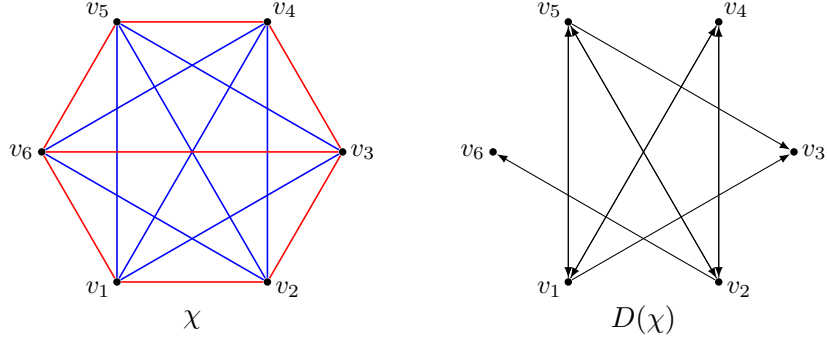
\begin{figure}[!ht]
    \centering
\begin{tikzpicture}[scale=1]
\node[inner sep= 1pt](z1) at (3,0)[circle,fill]{};
\node[inner sep= 1.3pt](v1) at (2.75,-0.1)[]{\small $v_1$};
\node[inner sep= 1pt](z2) at (5,0)[circle,fill]{};
\node[inner sep= 1.3pt](v2) at (5.27,-0.1)[]{\small $v_2$};
\node[inner sep= 1pt](z3) at (6, 1.73)[circle,fill]{};
\node[inner sep= 1.3pt](v3) at (6.27,1.73)[]{\small $v_3$};
\node[inner sep= 1pt](z4) at (5, 3.46)[circle,fill]{}; 
\node[inner sep= 1.3pt](v4) at (2.75,3.62)[]{\small $v_5$};
\node[inner sep= 1pt](z5) at (3, 3.46)[circle,fill]{};
\node[inner sep= 1.3pt](v5) at (5.23,3.62)[]{\small $v_4$};
\node[inner sep= 1pt](z6) at (2, 1.73)[circle,fill]{};
\node[inner sep= 1.3pt](v6) at (1.73,1.73)[]{\small $v_6$};
\node[inner sep= 1 pt]() at (4,-0.5)[]{ $\chi$}; 
\draw[red, line width=0.6pt] (z1) -- (z2);
\draw [red,line width=0.6pt](z2) -- (z3);
\draw [red,line width=0.6pt](z3) -- (z4);
\draw [red,line width=0.6pt](z4) -- (z5);
\draw [red,line width=0.6pt](z5) -- (z6);
\draw [red,line width=0.6pt](z6) -- (z1);
\draw[blue, line width=0.6pt] (z1) -- (z3);
\draw [blue,line width=0.6pt](z2) -- (z4);
\draw [blue,line width=0.6pt](z3) -- (z5);
\draw [blue,line width=0.6pt](z4) -- (z6);
\draw [blue,line width=0.6pt](z5) -- (z1);
\draw [blue,line width=0.6pt](z6) -- (z2);
\draw[blue, line width=0.6pt] (z1) -- (z4);
\draw [blue,line width=0.6pt](z2) -- (z5);
\draw [red,line width=0.6pt](z3) -- (z6);

\node[inner sep= 1pt](z1) at (9,0)[circle,fill]{};
\node[inner sep= 1.3pt](v1) at (8.75,-0.1)[]{\small $v_1$};
\node[inner sep= 1pt](z2) at (11,0)[circle,fill]{};
\node[inner sep= 1.3pt](v2) at (11.27,-0.1)[]{\small $v_2$};
\node[inner sep= 1pt](z3) at (12, 1.73)[circle,fill]{};
\node[inner sep= 1.3pt](v3) at (12.27,1.73)[]{\small $v_3$};
\node[inner sep= 1pt](z4) at (11, 3.46)[circle,fill]{}; 
\node[inner sep= 1.3pt](v4) at (8.75,3.62)[]{\small $v_5$};
\node[inner sep= 1pt](z5) at (9, 3.46)[circle,fill]{};
\node[inner sep= 1.3pt](v5) at (11.23,3.62)[]{\small $v_4$};
\node[inner sep= 1pt](z6) at (8, 1.73)[circle,fill]{};
\node[inner sep= 1.3pt](v6) at (7.73,1.73)[]{\small $v_6$};
\node[inner sep= 1 pt]() at (10,-0.5)[]{ $D(\chi)$}; 
\draw[-latex](z5) -- (z3);
\draw[-latex](z1) -- (z3);
\draw[-latex](z1) -- (z4);
\draw[-latex](z4) -- (z1);
\draw[-latex](z1) -- (z5);
\draw[-latex](z5) -- (z1);
\draw[-latex](z2) -- (z4);
\draw[-latex](z2) -- (z5);
\draw[-latex](z4) -- (z2);
\draw[-latex](z5) -- (z2);
\draw[-latex](z2) -- (z6);

\end{tikzpicture}
\caption{An example of dependency digraph. }\label{fig:dep-digraph}
\end{figure}

Note that $D$ may contain bidirectional edges, which can be viewed as directed cycles of length two; see~\cref{fig:dep-digraph} for an example.  To clarify, the induced coloring inherits the vertex ordering as well as the colors of the edges.

The main result of this section is the following embedding result which relates the problem of upper bounding $g(k)$ to the Ramsey problem for $f(k)$.

\begin{theorem} \label{thm:paths-embedding}
    For any integer $k \ge 1,$ and real $\eta > 0,$ there exists $c > 0$ such that the following holds. 
    Let $G = G_<$ be an ordered graph with no induced monotone $P_{2k}$. If $\overline{G}$ contains at least $\eta n^{f(k)}$ copies of $K_{f(k)}$, then $\overline{G}$ contains a $K_2[\frac{cn}{\log n}]$. In other words,
    $$g(k)\le f(k).$$
\end{theorem}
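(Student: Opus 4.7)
The plan is to combine the regularization afforded by Lemma~\ref{lem:clique-in-reduced-graph} with the Ramsey property built into the definition of $f(k)$, and then to exploit the acyclicity of the dependency digraph together with the $P_{2k}$-freeness of $G$ to embed a biclique of the desired size.

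First I apply Lemma~\ref{lem:clique-in-reduced-graph} to $\overline{G}$ with clique size $N:=f(k)$ and a small regularity parameter $\varepsilon\ll\eta$. This produces ordered clusters $A_1<A_2<\cdots<A_N$ of size at least $\gamma n$ each, pairwise $\varepsilon$-regular in $\overline G$ with density at least $\eta/2$. Next I define a red--blue coloring $\chi$ on the ordered clique on $[N]$ by declaring $ij$ (with $i<j$) to be \emph{red} when the density of $G$ between $A_i$ and $A_j$ exceeds some threshold $\tau=\tau(\eta)$, and \emph{blue} otherwise. By the defining property of $f(k)=N$, one can then pick indices $i_1<\cdots<i_k$ on which $\chi$ is admissible; relabelling $B_j:=A_{i_j}$, the clusters $B_1,\dots,B_k$ inherit an acyclic dependency digraph $D$.

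The core step is an embedding argument by contradiction. Assuming $\overline G$ contains no $K_2[cn/\log n]$, I process the clusters $B_1,\dots,B_k$ in a topological order of $D$ and iteratively select, inside each cluster, a suitable pair of vertices intended to play the role of two consecutive vertices of an induced monotone $P_{2k}$ in $G$. Red pairs between already-processed clusters will supply the consecutive edges of the path, while blue pairs will supply the required non-edges: regularity of each pair, combined with the absence of a large biclique in $\overline G$ (which prevents blue pairs from collapsing to complete bipartite graphs in $\overline G$), gives enough candidates at each step. Acyclicity of $D$ is precisely the combinatorial guarantee that these local choices can be extended to a globally induced monotone $P_{2k}$, contradicting the hypothesis on $G$ and thereby forcing the existence of the biclique.

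The main obstacle is the final embedding step, in particular calibrating $\tau$ and the interplay between red and blue constraints. On red pairs one needs the density in $G$ to survive restriction to the small subsets selected so far, which pushes $\tau$ away from $0$; on blue pairs one needs that, in the absence of a large biclique in $\overline G$, the non-edges of $G$ can be placed at all non-consecutive positions of the target path simultaneously, which pushes $\tau$ away from $1$. Reconciling these two requirements while the partial embedding propagates through a topological order of $D$ is where the Ramsey definition of $f(k)$ is brought to bear: acyclicity is precisely what allows the local choices to remain globally consistent.
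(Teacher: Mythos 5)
Your overall strategy coincides with the paper's: regularize $\overline{G}$ via \cref{lem:clique-in-reduced-graph}, colour cluster pairs by their density in $G$, invoke the Ramsey definition of $f(k)$ to extract $k$ clusters with an admissible colouring, and greedily embed an induced monotone $P_{2k}$ (two consecutive path vertices per cluster) in a topological order of the dependency digraph, reaching a contradiction. However, there is a concrete gap. If each chosen cluster $B_i$ hosts the two consecutive path vertices $x_{2i-1},x_{2i}$, then $k$ of the $2k-1$ path edges are \emph{within-cluster} edges of $G$, and nothing in your sketch produces them: the clusters are only known to be pairwise dense and regular in $\overline{G}$, and a single cluster could a priori induce a very sparse graph in $G$. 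The paper closes this by first proving that $G$ is $(\alpha,\beta)$-dense, i.e.\ every linear-sized vertex subset spans positive edge density in $G$ --- and this step is itself nontrivial: it applies the Pach--Tomon theorem (\cref{thm:tomon-pach-ordered}) to the complement of a sparse induced subgraph, using the standing assumption that $\overline{G}$ has no $K_2[\frac{cn}{\log n}]$. Only then can one split each $B_i$ into an ordered pair $C_{2i-1}<C_{2i}$ that is regular and dense \emph{in $G$} (via a second application of \cref{lem:clique-in-reduced-graph}, now to $G[B_i]$), which is what supplies the within-cluster path edges. Without this ingredient the embedding cannot be completed.

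A secondary issue is that your account of the two colours and of where the no-biclique hypothesis enters is inverted relative to \cref{def:dependency-digraph}. In the paper, blue means the cluster pair is dense in $G$ and red means it is sparse in $G$; the non-edges of $G$ required at non-consecutive positions come for free from the fact that \emph{all} pairs have density at least $\eta/2$ in $\overline{G}$, whereas the no-biclique hypothesis is used precisely on \emph{red} consecutive pairs, to guarantee at least one edge of $G$ between two large candidate sets that are sparse in $G$. The price of taking such an arbitrary edge is that one loses all control over the neighbourhoods of its endpoints in clusters that are dense in $G$ to both of them --- and this is exactly the situation recorded by the directed edges of $D(\chi)$, so acyclicity guarantees those endangered clusters were already embedded. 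If you literally colour ``red $=$ dense in $G$'' as written, the dependency digraph produced by \cref{def:dependency-digraph} encodes the wrong constraints and the admissibility delivered by $f(k)$ no longer protects the candidate sets; you would need to swap the colours to make the argument go through.
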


Let us first derive that $g(3)=f(3)=3$ and $g(4)=f(4)=5$, which imply~\cref{thm:p5,thm:p7}.
Recall that the optimal construction for $\beta(5)=1/2$ infers that $g(3)>2$ and the construction in~\cref{thm:induced-intro} shows that $g(4)>4$. Thus, by~\cref{thm:paths-embedding}, it suffices to prove $f(3)\le 3$ and $f(4)\le 5$.

\begin{cor}
    We have $f(3)\le 3$ and $f(4)\le 5$. Consequently, for any $\eps > 0$ and $n$-vertex ordered graph $G$ with no induced monotone $P_{6}$ (or $P_8$ resp.), if $\varrho_3(\overline{G})>\eps$ (or $\varrho_5(\overline{G})>\eps$), then $\overline{G}$ contains a $K_2[\frac{cn}{\log n}]$, where $c=c(\eps)>0$.
\end{cor}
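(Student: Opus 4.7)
My plan is to invoke Theorem~\ref{thm:paths-embedding}, which gives $g(k)\le f(k)$, reducing the statement to the two Ramsey-type bounds $f(3)\le 3$ and $f(4)\le 5$.

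The bound $f(3)\le 3$ is immediate from Definition~\ref{def:dependency-digraph}. On ordered $K_3$ with $v_1<v_2<v_3$, the only rules that can produce a directed edge are $i=1$ (triggered when $\chi(v_1v_2)=\mathrm{red}$ and $\chi(v_1v_3)=\chi(v_2v_3)=\mathrm{blue}$, contributing $(v_3,v_1),(v_3,v_2)$) and $i=2$ (triggered when $\chi(v_2v_3)=\mathrm{red}$ and $\chi(v_1v_2)=\chi(v_1v_3)=\mathrm{blue}$, contributing $(v_1,v_2),(v_1,v_3)$). These two rules are mutually exclusive since they impose opposite colours on $v_1v_2$, so every $D(\chi)$ has all edges emanating from a single vertex and is acyclic.

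For $f(4)\le 5$ I would first classify the \emph{bad} quadruples $Q=\{u_1<u_2<u_3<u_4\}$, i.e.\ those with cyclic $D(\chi|_Q)$. Enumerating the at most ten directed edges the rules $i=1,2,3$ can produce and tracking the colour-consistency requirements shows that $D(\chi|_Q)$ is cyclic if and only if the coloring of $Q$ falls into one of two families: (a) $\chi(u_1u_2)=\chi(u_3u_4)=\mathrm{red}$, $\chi(u_2u_3)=\mathrm{blue}$, and at least two of $\{u_1u_3,u_1u_4,u_2u_4\}$ are blue; (b) all three consecutive edges $u_1u_2,u_2u_3,u_3u_4$ are red, and all three non-consecutive edges $u_1u_3,u_1u_4,u_2u_4$ are blue. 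The key structural consequence is that in every bad $Q$, both the first and the last consecutive edge of the induced ordering are red.

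I would then suppose for contradiction that all five quadruples $Q_i=\{v_1,\ldots,v_5\}\setminus\{v_i\}$ are bad. Reading off ``first and last consecutive edges are red'' across $i=1,\ldots,5$ forces the six edges $\{v_1v_2,v_2v_3,v_1v_3,v_3v_4,v_3v_5,v_4v_5\}$ all to be red. In particular the middle consecutive edge of $Q_1=\{v_1,v_2,v_3,v_4\}$ is the red $v_2v_3$, so $Q_1$ must be of type (b), forcing $v_1v_3$ to be blue. But $v_1v_3$ is the first consecutive edge of $Q_4=\{v_1,v_3,v_4,v_5\}$ and hence red, a contradiction. Thus at least one quadruple is admissible and $f(4)\le 5$. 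The main obstacle is the classification step for bad $Q$, which is a bounded but careful enumeration; once that is in hand, the ``all five bad'' assumption collapses via the single colour conflict on $v_1v_3$.
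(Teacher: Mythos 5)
Your proof is correct and follows essentially the same route as the paper: both rest on the observation (via a classification of cyclic dependency digraphs on four vertices) that a non-admissible quadruple must have its first and last consecutive edges red, and then derive a contradiction among the quadruples of $[5]$ (the paper uses only three of them, you use all five, but the mechanism is identical). The only blemish is a harmless indexing slip in the final step --- under your own convention $\{v_1,v_2,v_3,v_4\}$ is $Q_5$ and $\{v_1,v_3,v_4,v_5\}$ is $Q_2$, not $Q_1$ and $Q_4$ --- but since you write out the sets explicitly, the argument is unambiguous and sound.
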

\begin{proof}
    To prove $f(3)\le 3$, we need to show that any red-blue coloring $\chi$ of $\binom{[3]}{2}$ is admissible. We may assume that $D(\chi)$ has at least one edge and thus by symmetry we may assume that $\chi(1,2) = \mathrm{red}$ and $\chi(1, 3) = \chi(2, 3) = \mathrm{blue}$. But then, $E(D(\chi)) = \{ (3, 1), (3, 2) \},$ which is acyclic. Thus $\chi$ is admissible as desired.

    To prove $f(4)\le 5$, let $\chi$ be a red-blue coloring of $\binom{[5]}{2}$, we need to show that the induced coloring on some set of $4$ vertices is admissible. Suppose $D(\chi|_{1234})$ is not acyclic, for otherwise we are done. We claim that  $\chi(12)=\chi(34)=\mathrm{red}$. To see this, suppose one of them is blue and by symmetry we may assume that $\chi(34)=\mathrm{blue}$. Now if $\chi(23)=\mathrm{red}$, then $D(\chi|_{1234})$ contains only edges from $\{1,4\}$ to $\{2,3\}$ if $\chi(12)=\mathrm{blue}$ or only edges from $\{4\}$ to $\{1,2,3\}$ if $\chi(12)=\mathrm{red}$, both of which are acyclic. Thus we may assume $\chi(23)=\mathrm{blue}$, and then $D(\chi|_{1234})$ contains only edges from $\{3,4\}$ to $\{1,2\}$, which is acyclic. 

    The above analysis shows that if none of $D(\chi|_{1234}), D(\chi|_{2345}), D(\chi|_{1345})$ is acyclic, then $\chi(12)=\chi(34)=\chi(23)=\chi(13)=\mathrm{red}$ and so $D(\chi|_{1234})$ contains only the edge from $\{4\}$ to $\{1,2\}$, which is acyclic, a contradiction.
\end{proof}

We now prove an upper bound for the general case, which together with~\cref{thm:paths-embedding}, implies the upper bound for $g(k)$ in~\cref{thm:induced-intro}.

\begin{prop}
    We have $f(k)\le \frac{k^2-k+2}{2}$.
\end{prop}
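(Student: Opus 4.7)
The plan is to induct on $k$, with the base case $k\le 3$ following from $f(3)\le 3\le \binom{3}{2}+1$.

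For the inductive step, assume $f(k-1)\le\binom{k-1}{2}+1$ and let $\chi$ be a red--blue coloring of the ordered clique on $N=\binom{k}{2}+1=\binom{k-1}{2}+k$ vertices. By the inductive hypothesis applied to the first $\binom{k-1}{2}+1$ vertices, there is an admissible $(k-1)$-subset $u_1<\cdots<u_{k-1}$; let $w_1<\cdots<w_{k-1}$ denote the remaining $k-1$ vertices, all lying to the right of $u_{k-1}$.

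The main technical ingredient is a one-sided extension principle: if $w>u_{k-1}$ and $\chi(u_{k-1}w)=\mathrm{blue}$, then $\{u_1,\dots,u_{k-1},w\}$ is admissible. Indeed, in the enlarged dependency digraph the only consecutive pair involving $w$ is $(u_{k-1},w)$, which is blue and hence not a red pair; so $w$ is never tied in a red pair and has no incoming arcs. As a source, $w$ cannot lie on any directed cycle, so the enlarged digraph remains acyclic. Complementarily, dropping the rightmost vertex from an admissible subset preserves admissibility, as removal can only delete arcs of the dependency digraph (it can eliminate the consecutive pair at the right end, but never introduce a new red pair).

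Armed with these two principles, I proceed iteratively. If some $w_j$ is blue to $u_{k-1}$, append it to finish. Otherwise peel $u_{k-1}$ off the core and iterate with the enlarged candidate pool $\{u_{k-1},w_1,\dots,w_{k-1}\}$. At round $t$, if some candidate is blue to $u_{k-1-t}$ then one attempts to chain $t+1$ successive blue extensions, based on a blue monotone increasing path of length $t+1$ starting at $u_{k-1-t}$ and ending in the current candidate pool, which would yield the desired admissible $k$-subset. If instead every round's single-edge extension fails (the ``all-red'' obstruction at all $k-1$ peeling rounds), then combining the failures across rounds shows that $\{u_1,\dots,u_{k-1}\}$ is a red clique and each $u_i$ is red-connected to every $w_j$; the $k$-subset $\{u_1,\dots,u_{k-1},w_1\}$ is then an all-red clique whose dependency digraph is empty (no blue edges to supply pointing vertices), hence admissible.

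The main obstacle is the intermediate scenario in which some rounds succeed in single-edge extensions but stall before reaching size $k$: the blue monotone path found at round $t$ must actually extend to length $t+1$ within the candidate pool. Controlling this with an Erd\H{o}s--Szekeres-style height argument on blue monotone paths, combined with the propagating red structure contributed by the failed peeling rounds, constitutes the technical heart of the proof and uses the exact vertex budget $\binom{k-1}{2}+k$ in an essential way.
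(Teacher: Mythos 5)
Your proposal is not a complete proof: the step you yourself flag as ``the technical heart'' is exactly the content of the proposition, and it is missing. The pieces you do prove are fine — the one-sided extension principle (a vertex appended on the right via a blue consecutive edge is a source of the dependency digraph, so acyclicity is preserved, and more generally appending a blue increasing path keeps the set admissible), and the fact that deleting the rightmost vertex preserves admissibility. But the iterative peeling scheme needs, at round $t$, a blue monotone path on $t+2$ vertices starting at $u_{k-1-t}$ and continuing into the candidate pool, and you give no argument for the required dichotomy ``either some round supplies such a path, or every round fails in the all-red sense.'' The intermediate case — a blue edge out of $u_{k-1-t}$ exists but no blue path of the needed length does — is precisely where the vertex budget $\binom{k-1}{2}+k$ must be spent, and invoking ``an Erd\H{o}s--Szekeres-style height argument combined with the propagating red structure'' without carrying it out leaves the proposition unproven. (Also, your ``all-red'' endpoint only records redness of edges from $u_{k-1-t}$ to the round-$t$ candidates; it does not by itself make $\{u_1,\dots,u_{k-1}\}$ a red clique unless every round fails, so the case analysis has to be set up more carefully than stated.)

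For comparison, the paper avoids induction entirely and runs the height argument globally, which is much shorter. For each vertex $v$ let $\sigma(v)$ be the number of vertices on the longest blue monotone increasing path starting at $v$. If some $\sigma(v)=k$ we get a blue path on $k$ vertices, whose dependency digraph is empty, hence admissible. Otherwise the level sets $S_t=\{v:\sigma(v)=t\}$, $1\le t\le k-1$, partition the vertex set, and each $S_t$ is a red clique (a blue edge inside $S_t$ would let you extend a blue path and raise $\sigma$). If $|S_t|\ge k+1-t$, take $k+1-t$ vertices of $S_t$ together with the blue path of order $t$ hanging off the last of them: every arc of the dependency digraph of these $k$ vertices goes from the path part into the red-clique part, so it is acyclic — admissible. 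Hence $|S_t|\le k-t$ and the total number of vertices is at most $\sum_{t=1}^{k-1}(k-t)=\binom{k}{2}<\binom{k}{2}+1$, a contradiction. If you want to salvage your induction, you would essentially have to reprove this level-set counting inside each round, so I recommend adopting the global argument directly.
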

\begin{proof}
 Suppose for a contradiction that there exists a red-blue edge-coloring $\chi$ of the ordered clique on vertices $1,2,\cdots,\frac{k^2-k+2}{2}$ such that the coloring induced on any $k$ vertices is not admissible. If there exists $t$ vertices $i_1< \cdots < i_{t}$ such that $\chi(i_ri_{r+1})=\mathrm{blue}$ for any $1\le r\le t-1$, we say these $t$ vertices form a blue path with order $t$.  For any vertices $v_i$, let $\sigma(v_i)$ be the order of the longest blue path starting from $v_i$.  If there are no blue edge starting from $v_i$, set $\sigma(v_i)=1$. If there exists a blue path with order $k$, then the coloring induced on these $k$ vertices is admissible as $D(\chi)$ is the empty graph, a contradiction.  Let $S_t=\{v_i|\sigma(v_i)=t\}$ for any $t\le k-1$, then these $S_t$ form a partition of the vertices.
\begin{claim}
 For $t\le k-1$, $|S_t|\le k-t$.
\end{claim}
\begin{poc}
Suppose $|S_t|\ge k+1-t$, by the definition of $S_t$, we can pick vertices $i_1<i_2<\cdots<i_{k+1-t}$ from $S_t$ and a blue order-$t$ path on vertices $i_{k+1-t}<\cdots<i_{k}$. Note that $\chi(i_ui_v)=\mathrm{red}$ for any $1\le u< v\le k+1-t$, as otherwise adding $i_u$ to the blue path of order $t$ starting from $i_v$ yields a blue path of order $t+1$, a contradiction to $i_v\in S_t$.  Let $D$ be the dependency digraph induced on the $k$ vertices $i_1,\ldots,i_k$. By the definition of $D$, for any directed edge $(u,v)\in D$, we must have $u> k+1-t$ and $v\le k+1-t$. Thus, $D$ is acyclic, a contradiction.
\end{poc}
Hence, $\frac{k^2-k+2}{2}=\sum_{t\in[k-1]}|S_t|\le \sum_{t\in[k-1]}(k-t)=\frac{k^2-k}{2},$ a contradiction.
\end{proof}

\subsection{Proof of Theorem~\ref{thm:paths-embedding}}
    By taking sufficiently small $c$, we may assume that $n$ is sufficiently large in terms of $k$ and $\eta.$ We define additional constants $c, \alpha, \beta, \gamma, \gamma_1, \gamma_2, \varepsilon, \varepsilon_1$, such that
    \[ 0 < 1/n \ll c \ll \alpha \ll \gamma=\gamma_{1}\gamma_{2} < \gamma_1 \ll \varepsilon_1 \ll \gamma_2 \ll \varepsilon \ll \beta \ll \eta<1/k. \]
    Suppose, for the sake of contradiction, that $\overline{G}$ does not contain a copy of $K_2[\frac{cn}{\log n}]$.
    \begin{claim}
        $G$ is $(\alpha, \beta)$-dense.
    \end{claim}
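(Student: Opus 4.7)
The plan is to argue by contradiction, reducing the claim to Pach--Tomon (Theorem~\ref{thm:tomon-pach-ordered}) via a one-step Markov cleaning. Suppose some $S\subseteq V(G)$ with $|S|\ge \alpha n$ satisfies $e(G[S])<\beta\binom{|S|}{2}$, so $\overline G[S]$ has edge density greater than $1-\beta$.

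First I would clean $S$. Since $\sum_{v\in S}\deg_{G[S]}(v)<\beta|S|^{2}$, at most $\sqrt\beta\,|S|$ vertices of $S$ have $G[S]$-degree at least $\sqrt\beta\,|S|$. Let $S'\subseteq S$ be the remaining low-degree vertices; then $|S'|\ge(1-\sqrt\beta)|S|\ge \alpha n/2$, and for every $v\in S'$,
\[
\deg_{\overline G[S']}(v)\ge |S'|-1-\deg_{G[S]}(v)\ge (1-2\sqrt\beta)|S'|,
\]
provided $n$ is large.

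Next I would apply Theorem~\ref{thm:tomon-pach-ordered} to the ordered graph $G[S']$, which still contains no induced monotone $P_{2k}$. The constant $\beta(2k)<1$ furnished by Theorem~\ref{thm:tomon-pach-ordered} depends only on $k$, so the hierarchy is easily arranged so that $\beta$ is small enough to give $1-2\sqrt\beta\ge \beta(2k)+\eps_0$ for some $\eps_0=\eps_0(k)>0$; concretely, $\beta<(1-\beta(2k))^{2}/16$ suffices, a condition depending only on $k$ and hence compatible with the global hierarchy. Theorem~\ref{thm:tomon-pach-ordered} then produces a biclique of size at least $c'|S'|/\log|S'|\ge c'\alpha n/(3\log n)$ in $\overline G[S']\subseteq \overline G$, where $c'=c'(\eps_0)>0$. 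Taking $c<c'\alpha/3$ in the hierarchy contradicts the standing assumption that $\overline G$ contains no $K_{2}[cn/\log n]$, and the claim follows.

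I do not foresee a major obstacle here: the only nontrivial input is $\beta(2k)<1$, which is built into Theorem~\ref{thm:tomon-pach-ordered}, and a single Markov step upgrades the density hypothesis $e(\overline G[S])>(1-\beta)\binom{|S|}{2}$ to the minimum-degree hypothesis required by that theorem. Everything else is bookkeeping inside the given hierarchy $c\ll\alpha\ll\cdots\ll\beta\ll\eta$.
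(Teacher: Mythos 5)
Your proof is correct and follows essentially the same route as the paper: assume a sparse subset $S$, do a single Markov-type cleaning to pass to a large $S'\subseteq S$ with high minimum degree in $\overline G[S']$, then invoke Theorem~\ref{thm:tomon-pach-ordered} to extract a $K_2[\Omega(n/\log n)]$ and contradict the standing assumption. The only cosmetic difference is the cleaning threshold (you use $\sqrt\beta\,|S|$, the paper uses $4\beta|S|$), which changes constants but not the argument.
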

    \begin{poc}
        Suppose otherwise, so there is a set $S \subseteq V(G)$ with $|S| \ge \alpha n$ and $e_G(S) < \beta \binom{|S|}{2}.$ Removing vertices of degree more than $4 \beta |S|$ in $G[S],$ we obtain a set $S'$ of size at least $|S| / 2 \ge \alpha n / 2$ such that $\Delta(G[S']) < 8 \beta |S'|.$ Then~\cref{thm:tomon-pach-ordered} implies the existence of $K_2[\frac{cn}{\log n}]$ in $\overline{G[S']}$, contradicting our assumption.
    \end{poc}
    
    By Lemma~\ref{lem:clique-in-reduced-graph}, there exist sets $A_1, \dots, A_{f(k)} \subseteq V(G)$ such that $A_1 < A_2 < \dots < A_{f(k)},$ for all $i \in [f(k)], |A_i| \ge \gamma_1 n$ and for all $1 \le i < j \le f(k),$ $(A_i, A_j)$ is $\varepsilon_1$-regular with density at least $\frac{\eta}{2}$ in $\overline{G}.$

    We define an edge coloring $\chi_0$ of the complete graph on the ordered vertex set $\{1,2,\ldots, f(k)\}$ as follows. For $1 \le i < j \le f(k)$, let
    \[ \chi_0(ij) = \begin{cases}
        \mathrm{blue}, &\text{if } d(A_i, A_j) \ge \beta^2\\
        \mathrm{red}, &\text{otherwise}.
    \end{cases}
    \]

    By definition of $f(k),$ there exist indices $a_1 < \dots < a_k$ such that $\chi_0[\{a_1, \dots, a_k\}]$ is admissible. For $i \in [k],$ let $B_i = A_{a_i}.$ Moreover, define a coloring $\chi \colon \binom{[k]}{2} \rightarrow \{ \mathrm{red}, \mathrm{blue} \}$ by setting $\chi(ij) = \chi_0(a_ia_j)$ and note that $\chi$ is admissible. We now show that we can either construct an induced monotone $P_{2k}$ in $G$ by placing two consecutive vertices in each set $B_i$ or there is a biclique of size $\frac{cn}{\log n}$ in $\overline{G},$ thus obtaining a contradiction.
    
    Recall that $|B_i| \ge \gamma_1 n \ge \alpha n,$ so since $G$ is $(\alpha, \beta)$-dense, we have $d(G[B_i]) \ge \beta.$ Applying Lemma~\ref{lem:clique-in-reduced-graph} to $G[B_i]$ for each $i \in [k],$ we obtain two sets $C_{2i-1}, C_{2i} \subseteq B_i$ of size at least $\gamma_2 |B_i| \ge \gamma_2 \gamma_1 n=\gamma n$ with $C_{2i-1} < C_{2i}$ and such that $(C_{2i-1}, C_{2i})$ is $\varepsilon$-regular in $G$ with density at least $\frac{\beta}{4}$. To summarize, we have that $C_1 < C_2 < \dots< C_{2k},$ $|C_i| \ge \gamma_2 |B_i| \ge \gamma n,$ for all $i \in [2k].$ Moreover, using that $\frac{\varepsilon_{1}}{\gamma_{2}}< \varepsilon$, by Lemma~\ref{lem:slicing}, the pairs $(C_i, C_j)$ are $\varepsilon$-regular. Finally, the following holds for the densities between different pairs. Let $1 \le i < j \le 2k,$ then

    \begin{enumerate}[label=\alph*)]
        \item \label{dense-consecutive} If $\ceil{\frac{i}{2}} = \ceil{\frac{j}{2}}$, then $d(C_i, C_j) > \frac{\beta}{4}$.
        \item \label{not-too-dense} If $\ceil{\frac{i}{2}} \neq \ceil{\frac{j}{2}}$, then $d(C_i, C_j) < 1 - \frac{\eta}{4}$.
        \item \label{density-red} If $\chi(\ceil{\frac{i}{2}}, \ceil{\frac{j}{2}}) = \mathrm{red}$, then $d(C_i, C_j) < 2 \beta^2$.
        \item \label{density-blue} If $\chi(\ceil{\frac{i}{2}}, \ceil{\frac{j}{2}}) = \mathrm{blue}$, then $d(C_i, C_j) > \frac{\beta^{2}}{2}$.
    \end{enumerate}    

    Let $D = D(\chi)$ be the dependency digraph of $\chi$ as defined in~\cref{def:dependency-digraph}. By assumption, $D$ is acyclic. Consider a fixed topological ordering of $D$ and for $i \in [k]$, let $\pi(i)$ denote the position of vertex $i$ in this topological ordering. In other words, we have for every edge $ij \in E(D)$, $\pi(i) < \pi(j)$.    

    Let $(i_1, i_1+1), (i_2, i_2+1), \dots, (i_{k-1}, i_{k-1}+1)$ be an ordering of the edges $(1, 2), \dots, (k-1, k)$ according to the earliest appearance in $\pi$ of one of their endpoints. In other words for $j < j',$ we have that $\min\{\pi(i_j), \pi(i_j+1)\} \le \min\{\pi(i_{j'}), \pi(i_{j'}+1)\}$.

    We shall construct a monotone induced path $x_1, \dots, x_{2k}$ with $x_j \in C_j$ for all $j \in [2k].$ We first embed all but $x_1$ and $x_{2k}$ in $k-1$ steps. At step $t \in [k-1],$  we shall define the vertices $x_{2i_t}, x_{2i_t+1}.$ Finally, we embed $x_1$ and $x_{2k}.$

    Let us now formally describe the embedding procedure. For $t \in [k],$ after $t-1$ steps we shall have the following. Let $I_t \subseteq [2k]$ denote the set of vertices defined after $t-1$ steps. Thus, for $t \in [k],$ we will always have
    \[ I_t = \bigcup_{j=1}^{t-1} \{ 2i_j, 2i_j+1 \}. \]
    For each $i \in [2k] \setminus I_t$ (so $x_i$ has not been embedded yet), we will have a set $C^t_i$ of size at least $\beta^{8(t-1)} \cdot |C_i|$ into which we can embedded vertex $x_i$. Formally, after $t-1$ steps for $i \in [2k]$, we define

    \[ C_i^t = \left( C_i \cap \bigcap_{j \in I_t \cap \{ i-1, i+1\}} N(x_j) \right) \setminus \bigcup_{j \in I_t \setminus \{ i-1, i+1\} } N(x_j). \]
    We initialize with $C_i^1 = C_i$ for all $i \in [2k]$.
    
    Given that we have run the procedure for $t-1 \le k-2$ steps we show we can run step $t.$ Let $i = 2i_t$ and recall that we aim to embed vertices $x_i$ and $x_{i+1}.$ By assumption, we have sets $C^t_i$ and $C^t_{i+1}$ of size at least $\beta^{8(t-1)} |C_i|$ into which we can embed the vertices $x_i$ and $x_{i+1}$, respectively. Note that 
    \[ \frac{|C^t_i|}{|B_i|}= \frac{|C^t_i|}{|C_i|}\cdot\frac{|C_i|}{|B_i|} \ge \beta^{8(t-1)} \gamma_2 > \varepsilon_1^{0.1}, \]
which implies that for all $i, j \in [2k] \setminus I_t, i \neq j,$ the pair $(C^t_i, C^t_j)$ is $\varepsilon^{0.9}$-regular. Hence, there is a subset $C_i' \subseteq C^t_i$ of size at least \[|C^t_i|(1 - 2k \varepsilon^{0.9}) \ge \frac{|C^t_i|}{2} \] such that for every $v \in C_i'$ and $j \in [2k] \setminus (I_t \cup \{i, i+1\}),$ 
    \[\left| d(v,C^t_j) - d(C_i, C_j)\right| < \sqrt{\varepsilon}.\tag{$\clubsuit$} \] Analogously, there is a set $C_{i+1}' \subseteq C^t_{i+1}$ of size at least $\frac{|C^{t}_{i+1}|}{2}$ such that for all $v \in C_i'$ and $j \in [2k] \setminus (I_t \cup \{i, i+1\}),$ $\left| d(v,C^t_j) - d(C_{i+1}, C_j)\right| < \sqrt{\varepsilon}$.
    
    Now we consider two cases.

\medskip

\noindent\textbf{Case 1.} $\chi(i_t, i_t+1) = \mathrm{blue}$.
    
    Note that $\ceil{\frac{i}{2}} = i_t$ and $\ceil{\frac{i+1}{2}} = i_t+1$. By condition \ref{density-blue}, we have $d(C_i, C_{i+1}) > \frac{\beta^{2}}{2}$ and thus by $\varepsilon$-regularity and choices of parameters, $d(C'_i, C'_{i+1}) > \frac{\beta^{2}}{2} - \sqrt{\varepsilon} > \beta^3.$ In particular, there is a vertex $u \in C'_i$ with $|N(u) \cap C'_{i+1}| \ge \frac{\beta^{3}}{2} \cdot |C'_{i+1}|$. Pick such a vertex $u.$ For $j \in [2k] \setminus (I_t \cup \{i-1, i, i+1\})$, let
    $C'_j = C^t_j \setminus N(u).$  Furthermore, define $C'_{i-1} = C^t_{i-1} \cap N_G(u)$ if $i-1 \not\in I_t$.  
    
    Recall that $d(C_i, C_j) < 1 - \frac{\eta}{4}$ for $j \in [2k] \setminus (I_t \cup \{i-1, i, i+1\})$ by condition \ref{not-too-dense} and $d(C_{i-1}, C_i) > \frac{\beta}{4}$ by condition \ref{dense-consecutive}. Therefore, for all $j \in [2k] \setminus (I_t \cup \{i, i+1\}),$ combining with  $(\clubsuit)$ we have \[|C'_j| \ge \frac{\beta^{3}}{100}\cdot |C^t_j| \ge \beta^{8(t-1) + 4} \cdot \gamma n.\]
    
    By $\varepsilon$-regularity, there is a vertex $v \in N(u) \cap C'_{i+1}$ such that for all $j \in [2k] \setminus (I_t \cup \{i, i+1\}),$ we have \[\left|d(v,C'_j) - d(C_{i+1}, C_j)\right| < \sqrt{\varepsilon}.\] Pick such a vertex $v$ and let $x_i = u$ and $x_{i+1} = v.$ Recalling that $d(C_{i+1}, C_j) < 1 - \frac{\eta}{4}$ for $j \neq i+2$ and $d(C_{i+1}, C_{i+2}) > \frac{\beta}{4}$, it follows that $|C^{t+1}_j| \ge \beta^{8t} \cdot |C^t_j|$ for all $j \in [2k] \setminus I_{t+1}$ as needed.

\medskip

\noindent\textbf{Case 2.} $\chi(i_t, i_t+1) = \mathrm{red}$.

    Since $|C'_i|, |C'_{i+1}| \ge\frac{1}{2}\cdot\beta^{8(t-1)}\gamma n> \frac{n}{\log n} $ and $\overline{G}$ is $K_{2}[\frac{cn}{\log n}]$-free, there exists an edge $uv \in E(G)$ with $u \in C'_i, v \in C'_{i+1}.$ Set $x_i = u, x_{i+1} = v.$ We will show that for all $j \in [2k] \setminus I_{t+1},$ $|C^{t+1}_j| \ge \beta^8 |C^t_j|$, which is sufficient by our assumption. 
    
    Consider first $j = i-1.$ If $j \not\in [2k] \setminus I_t,$ there is nothing to prove. Otherwise, $d_G(C_j, C_i) >\frac{\beta}{4}$ by condition~\ref{dense-consecutive} and $d(C_j, C_{i+1}) < 2 \beta^2$ by condition~\ref{density-red}. Hence, by the definitions of the sets $C'_i, C'_{i+1},$ we have \[|C^{t+1}_j| = |C^t_j \cap N(u) \setminus N(v)| \ge |C^t_j|\cdot\bigg(\frac{\beta}{4} - \sqrt{\varepsilon} - 2\beta^2 - \sqrt{\varepsilon}\bigg) \ge \beta^8 |C_t^j|.\]
    
    Completely analogously, if $i+2 \in [2k] \setminus I_{t},$ we have $|C^{t+1}_{i+2}| \ge \beta^8 |C^t_{i+2}|.$

    Now, consider $j \in [2k] \setminus (I_{t} \cup \{i-1, i+2\})$. If $\chi(\floor{\frac{i}{2}}, \floor{\frac{j}{2}}) = \mathrm{red}$ or $\chi(\floor{\frac{i+1}{2}}, \floor{\frac{j}{2}}) = \mathrm{red},$ then $d(C_i, C_j) + d(C_{i+1}, C_j) < 2\beta^2 + 1 - \frac{\eta}{4}$ and so combining with $(\clubsuit)$, we have \[|C^{t+1}_j| \ge |C^t_j|\cdot\bigg(1 - \frac{\eta}{4} - 2\beta^2 - 2\sqrt{\varepsilon}\bigg) > \beta^8 |C^t_j|,\] as needed. Then we can assume that $\chi(\floor{\frac{i}{2}}, \floor{\frac{j}{2}}) = \chi(\floor{\frac{i+1}{2}}, \floor{\frac{j}{2}}) = \mathrm{blue}$. Then by definition, we can see that $(\floor{\frac{j}{2}}, \floor{\frac{i}{2}}), (\floor{\frac{j}{2}}, \floor{\frac{i+1}{2}})\in E(D)$, which implies that $\pi(\floor{\frac{j}{2}}) < \pi(\floor{\frac{i}{2}}), \pi(\floor{\frac{i+1}{2}})$. Therefore, $(\floor{\frac{j}{2}}, \floor{\frac{j}{2}}+1)$ appears before $\floor{\frac{i}{2}}, \floor{\frac{i}{2}}+1$ in the sequence $(i_1, i_1+1), \dots, (i_{k-1}, i_{k-1}+1)$ which in turn implies that $j \in I_t,$ contradicting our assumption.
    
    We have shown that we can run all $k-1$ steps of the above procedure. Finally, we have two candidate sets $C^k_1, C^k_{2k}$ of size at least $\beta^{8k} \gamma_2 |B_1|$ into which we can embed the vertices $x_1$ and $x_{2k}$ respectively. Recalling that $d(C_1, C_{2k}) < 1 - \frac{\eta}{4}$, by $\varepsilon_1$-regularity, there are vertices $u \in C^k_1, v \in C^k_{2k}$ such that $uv \not\in E(G).$ Setting $x_1 = u, x_{2k} = v,$ we have that $x_1, x_2, \dots, x_{2k}$ is a monotone induced path in $G$, contradicting our assumption.
    
    This finishes the proof of~\cref{thm:paths-embedding}.

\section{Lower bound construction for Theorem~\ref{thm:induced-intro}}\label{section:ConstructionBEGraph}

In this section, we proceed to construct $n$-vertex ordered graphs $G$ with no induced monotone path $P_{2k-1}$ such that its complement $\overline{G}$ has positive $K_{\lfloor\frac{k^{2}}{4}\rfloor}$-density and no biclique of size $n/e^{o(\sqrt{\log n/\log\log n})}$.

Now let $\eps>0$, $n$ be sufficiently large, $h=\frac{\log{n}}{\log{\log{n}}}$ and $\mu=\frac{\varepsilon}{\sqrt{h}}$. Set $s=\floor{\frac{k}{2}}$ and $t=\ceil{\frac{k}{2}}$. For simplicity, we only provide the proof details for even $k$ so that the floors/ceilings can be dropped; the argument for odd $k$ is identical.

\medskip

\noindent\textbf{Construction.} By~\cref{lem: partition-sphere}, as $n$ is sufficiently large, we can partition the unit sphere $\mathbb{S}^{h-1}$ into $\frac{2n}{k}$ many pieces $D_{1},D_{2},\ldots,D_{\frac{2n}{k}}$ of equal measure such that each piece has diameter at most $\frac{\mu}{4}$. We construct $V(G)=V_{1}\sqcup V_{2}\sqcup\cdots\sqcup V_{s}$ as follows. For each $i\in[s]$, we select an arbitrary point from each $D_{j}$, $j\in[\frac{2n}{k}]$, as a vertex. So each $V_{i}$ consists of $\frac{2n}{k}$ vertices. We randomly partition each $V_{i}=V_{i}^{(1)}\sqcup V_{i}^{(2)}\sqcup\cdots\sqcup V_{i}^{(t)}$ into $t$ parts of equal size. Moreover, we label the vertices so that $V_{1}<V_{2}<\cdots<V_{s}$ and for each $i\in [s]$, $V_{i}^{(1)}<V_{i}^{(2)}<\cdots<V_{i}^{(t)}$. For convenience, for a pair of vertices $x,y\in V(G)$, we write $|x-y|$ for the distance between the corresponding points on the sphere.

The edge set of $G$ consists of the following three types of edges; see~\cref{fig:graph-on-sphere}.
\begin{enumerate}
    \item For each $1\le i\le s$ and $1\le j\le t$, $G[V_{i}^{(j)}]$ is a clique.
    \item For given $1\le i\le s$ and $j_{1}\neq j_{2}$, $x\in V_{i}^{(j_{1})}$ and $y\in V_{i}^{(j_{2})}$ are adjacent if $|x-y|\ge 2-\mu$.
    \item For given $i_{1}\neq i_{2}$, $x\in V_{i_{1}}$ and $y\in V_{i_{2}}$ are adjacent if $|x-y|> \sqrt{2}-\mu$.
\end{enumerate}

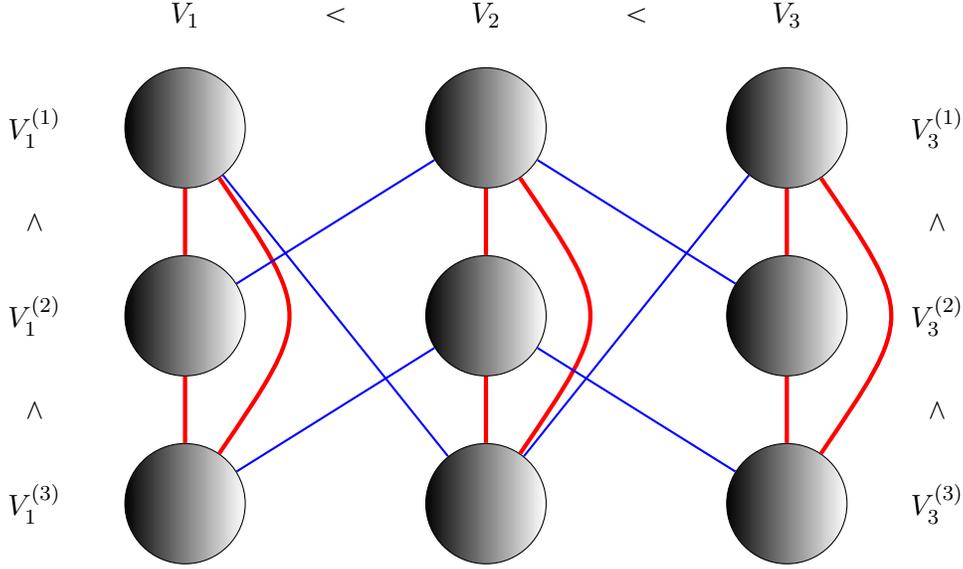
\begin{figure}[bhtp]
    \centering
\begin{tikzpicture}

\node[circle, draw, minimum size=1.6cm, left color=black, right color=white] (A1) at (0,5) {};

\node[circle, draw, minimum size=1.6cm, left color=black, right color=white] (A2) at (0,2.5) {};

\node[circle, draw, minimum size=1.6cm, left color=black, right color=white] (A3) at (0,0) {};

\draw[red, ultra thick] (A1) -- (A2);

\draw[red, ultra thick] (A2) -- (A3);
 \draw[red, ultra thick] (A1) .. controls (1.7,2.5) .. (A3);

\node[circle, draw, minimum size=1.6cm, left color=black, right color=white] (B1) at (4,5) {};

\node[circle, draw, minimum size=1.6cm, left color=black, right color=white] (B2) at (4,2.5) {};

\node[circle, draw, minimum size=1.6cm, left color=black, right color=white] (B3) at (4,0) {};

\draw[red, ultra thick] (B1) -- (B2);

\draw[red, ultra thick] (B2) -- (B3);

 \draw[red, ultra thick] (B1) .. controls (5.7,2.5) .. (B3);

\node[circle, draw, minimum size=1.6cm, left color=black, right color=white] (C1) at (8,5) {};

\node[circle, draw, minimum size=1.6cm, left color=black, right color=white] (C2) at (8,2.5) {};

\node[circle, draw, minimum size=1.6cm, left color=black, right color=white] (C3) at (8,0) {};

\draw[red, ultra thick] (C1) -- (C2);

\draw[red, ultra thick] (C2) -- (C3);

 \draw[red, ultra thick] (C1) .. controls (9.7,2.5) .. (C3);

\node[black] at (0, 6.5) {$V_{1}$};
\node[black] at (2, 6.5) {$<$};
\node[black] at (4, 6.5) {$V_{2}$};
\node[black] at (6, 6.5) {$<$};
\node[black] at (8, 6.5) {$V_{3}$};
\node[black] at (-2,3.75) {\rotatebox{90}{$>$}};
\node[black] at (-2,1.25) {\rotatebox{90}{$>$}};
\node[black] at (10,3.75) {\rotatebox{90}{$>$}};
\node[black] at (10,1.25) {\rotatebox{90}{$>$}};
\node[black] at (-2, 5) {$V_{1}^{(1)}$};
\node[black] at (-2, 2.5) {$V_{1}^{(2)}$};
\node[black] at (-2, 0) {$V_{1}^{(3)}$};

\node[black] at (10, 5) {$V_{3}^{(1)}$};
\node[black] at (10, 2.5) {$V_{3}^{(2)}$};
\node[black] at (10, 0) {$V_{3}^{(3)}$};


\draw[blue, thick] (A1) -- (B3);

\draw[blue, thick] (A2) -- (B1);

\draw[blue, thick] (A3) -- (B2);

\draw[blue, thick] (B1) -- (C2);

\draw[blue, thick] (B2) -- (C3);

\draw[blue, thick] (B3) -- (C1);

\end{tikzpicture}
    \caption{The black shading represents a clique and is of type-1, The red and blue edges are of type-2 and 3 respectively.}
    \label{fig:graph-on-sphere}
\end{figure}

\begin{claim}
    $G$ does not contain  induced monotone $P_{2k-1}$.
\end{claim}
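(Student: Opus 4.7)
The approach is by contradiction. Suppose $G$ contains an induced monotone path $P\colon x_1<x_2<\cdots<x_{2k-1}$. My strategy is to extract two ``antipodal'' path-edges, one inside some $V_{i^*}$ and the other inside some $V_{i^{**}}$ with $i^*<i^{**}$, whose four cross pairs are all non-adjacent on $P$. The induced hypothesis then forces each of these four cross Euclidean distances on the sphere to be at most $\sqrt{2}-\mu$, and combined with the two $\ge 2-\mu$ antipodal distances this contradicts~\cref{thm:BEIsK4free} (applicable since $\mu=\varepsilon/\sqrt{h}<1/4$). The degenerate cases $k\in\{2,3\}$ follow immediately: each subpart $V_i^{(j)}$ is a clique and contributes at most two vertices to any induced path, so the length is at most $2st=2\lfloor k/2\rfloor\lceil k/2\rceil<2k-1$.

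For $k\ge 4$, I would first produce the antipodal pairs by pigeonhole. Since each $V_i^{(j)}$ is a clique, no three consecutive path vertices share a subpart, so $P$ decomposes into at least $k$ maximal same-subpart ``supernodes'' of size $\le 2$, and hence contains at least $k-1$ between-supernode transitions. At most $s-1=k/2-1$ of these go between different $V_i$'s, so at least $k/2$ are \emph{subpart-transitions} inside a single $V_i$; by the type-2 edge rule each such transition $x_jx_{j+1}$ satisfies $|x_j-x_{j+1}|\ge 2-\mu$. Each $V_i$ has only $t=k/2$ subparts and hence at most $t-1$ subpart-transitions, so the antipodal pairs are spread over at least two distinct $V_i$'s. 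Pick any two such, and write the chosen pairs as $(x_p,x_{p+1})\subseteq V_{i^*}$ and $(x_q,x_{q+1})\subseteq V_{i^{**}}$ with $i^*<i^{**}$.

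The whole proof reduces to arranging $q-p\ge 3$, which makes all four cross pairs $x_px_q,\, x_px_{q+1},\, x_{p+1}x_q,\, x_{p+1}x_{q+1}$ non-adjacent on $P$. This is automatic when some visited $V_{i'''}$ lies strictly between $V_{i^*}$ and $V_{i^{**}}$ in the path order, since then its block pushes $l_{i^{**}}\ge r_{i^*}+2$ (writing $[l_i,r_i]$ for the index range of $V_i$ on $P$); in particular, if three or more $V_i$'s host antipodal pairs we simply take $V_{i^*},V_{i^{**}}$ to be the first and last such. It is also automatic if $V_{i^*}$ carries an antipodal pair not ending at $x_{r_{i^*}}$, or $V_{i^{**}}$ carries one not starting at $x_{l_{i^{**}}}$, by choosing that pair.

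The sole remaining obstruction, and the main technical point, is the ``boundary-only'' scenario: exactly two $V_i$'s carry antipodal pairs, they are consecutively visited, and each has a \emph{unique} antipodal pair placed at the corresponding boundary. I would rule this out for $k\ge 4$ by a short structural computation. If the unique antipodal pair of $V_{i^*}$ sits at positions $(r_{i^*}-1,r_{i^*})$, then the last subpart of $V_{i^*}$ used holds only $x_{r_{i^*}}$ and the preceding $a_{i^*}-1$ vertices all lie in one clique-subpart of size at most $2$, forcing $a_{i^*}\le 3$; symmetrically $a_{i^{**}}\le 3$. Every other visited $V_i$, lacking an antipodal pair, uses a single subpart and contributes at most $2$ vertices to $P$. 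Summing, the path has at most $3+3+2(s-2)=k+2$ vertices, contradicting $2k-1>k+2$ for $k\ge 4$. The argument for odd $k$ is identical after substituting $s=\lfloor k/2\rfloor$ and $t=\lceil k/2\rceil$.
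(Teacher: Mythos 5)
Your proof is correct and follows essentially the same strategy as the paper: locate two type-2 (antipodal) edges of the path in distinct parts $V_{i^*}, V_{i^{**}}$ that are separated by at least one path vertex, and invoke \cref{thm:BEIsK4free} to derive a contradiction, falling back on the arithmetic bound $|P|\le 3+3+2(s-2)=k+2<2k-1$ in the degenerate configuration. The paper organizes the count via the quantities $p_i=|V(P)\cap V_i|$ (showing at least two $p_i\ge 3$ and handling the boundary configuration through ``WLOG $p_1\ge 4$, take the first four vertices of $V_1$''), whereas you count subpart-transitions directly via a supernode decomposition and give a more explicit case analysis of the boundary-only scenario; this is mildly more careful than the paper's ``WLOG'' step, which implicitly relies on path-reversal symmetry, but the two proofs are otherwise the same argument.
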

\begin{poc}
    Suppose $P$ is an induced monotone $P_{2k-1}$ on vertices $a_{1}<a_{2}<\cdots<a_{2k-1}$. Note first that as each $V_i^{(j)}$ induces a clique, it contains at most two vertices of the induced path $P$. Thus $P$ has at most $2\cdot t=k$ vertices in each $V_i$, $i\in[s]$. Consequently, if all but at most one $p_i=|V(P)\cap V_i|$, $i\in [s]$, are at most 2, then $|P|=\sum_{i\in[s]}p_i\le k+2(s-1)=2k-2$, a contradiction. We may then assume that at least two $p_i$ are at least 3. Note that as $V_{1}<V_{2}<\cdots<V_{s}$, each $V_i$ must contain consecutive vertices in $P$. 

    By definitions of type-2 and type-3 edges, there cannot be two type-2 edges $aa'\in V_i$ and $bb'\in V_j$ with $i\neq j$ such that $ab,ab',a'b,a'b'\not\in E(G)$ due to~\cref{thm:BEIsK4free}.

    We claim that if any $p_i\ge 3$, then $P[V_i]$ contains a type-2 edge. To see this, say $a_r,a_{r+1},a_{r+2} \in P\cap V_i$. Then these three vertices cannot be in the same $V_{i}^{(j)}$ for any $j\in[t]$ as $V_i^{(j)}$ induces a clique. Then at least one of the edges $a_{r}a_{r+1}$ and $a_{r+1}a_{r+2}$ is of type-2.

    If there are at least three say $p_i,p_j,p_{\ell}\ge 3$ with $i<j<\ell$. Then by the above discussion, there exist type-2 edges $e$ in $P[V_i]$ and $e'$ in $P[V_{\ell}]$. Moreover, as $p_j>0$, there are no edges between these type-2 edges $e$ and $e'$, contradicting~\cref{thm:BEIsK4free}. 

    Thus, we may assume that exactly two, say $V_1, V_2$, containing at least $3$ vertices of $P$, and the remaining parts contain at most $2$ vertices each. If $p_1=p_2=3$, then $|P|\le 3+3+2(s-2)\le k+2\le 2k-2$ as $k\ge 4$. Thus, without loss of generality $p_1\ge 4$ and so $a_1,a_2,a_3,a_4\in P[V_1]$. Then $\{a_1,a_2,a_3\}$ and $P[V_2]$ both contain a type-2 edge and there is no edge between these two edges because of $a_4$, again contradicting~\cref{thm:BEIsK4free}.
\end{poc}

Next, we verify that there is no large biclique in $\overline{G}$.

\begin{claim}
    $\overline{G}$ does not contain $K_{2}[\frac{cn}{\log{n}}]$ as a subgraph.
\end{claim}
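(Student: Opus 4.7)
The plan is to argue by contradiction: assume $S,T\subseteq V(G)$ with $|S|=|T|=m:=\frac{cn}{\log n}$ span the two sides of a copy of $K_2[m]$ in $\overline G$. The goal is to use two layers of pigeonhole, followed by \cref{lemma:LargeMeasureToLargeDistance}, to produce two vertices $v_p\in S$, $v_q\in T$ whose Euclidean distance on $\mathbb S^{h-1}$ is too large to be consistent with $v_pv_q\in E(\overline G)$. For any $S_0\subseteq V_i$, write $\tilde S_0:=\bigcup_{v\in S_0}D_{\pi(v)}\subseteq\mathbb S^{h-1}$, where $\pi(v)$ is the unique piece of the partition associated to $v$; since the pieces have equal measure $\frac{k}{2n}$ and $v\mapsto D_{\pi(v)}$ is injective on each $V_i$, $\lambda(\tilde S_0)=|S_0|\cdot\frac{k}{2n}$. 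The choice $h=\frac{\log n}{\log\log n}$ and $\mu=\eps/\sqrt h$ is calibrated so that $1/\log n$ dominates $e^{-\Theta(\eps\sqrt h)}$, giving the needed measure lower bounds.

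\emph{Case A.} A first pigeonhole on the $s$ outer clusters yields $i^*\neq i'$ with $|S_0|,|T_0|\ge m/s$, where $S_0:=S\cap V_{i^*}$ and $T_0:=T\cap V_{i'}$. All pairs $(x,y)\in S_0\times T_0$ lie in distinct $V_i$'s, and as $xy\in E(\overline G)$ we must have $|x-y|\le\sqrt 2-\mu$ by the type-3 definition. On the other hand, $\lambda(\tilde S_0),\lambda(\tilde T_0)\ge\frac{m}{n}=\frac{c}{\log n}>e^{-h\mu/2}$ for large $n$, so \cref{lemma:LargeMeasureToLargeDistance} produces $p\in\tilde S_0$, $q\in\tilde T_0$ with $|p-q|\ge 2-\mu$. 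Each piece has diameter at most $\mu/4$, so the vertices $v_p\in S_0,v_q\in T_0$ corresponding to the pieces through $p,q$ satisfy $|v_p-v_q|\ge 2-3\mu/2>\sqrt 2-\mu$ (since $\mu$ is small), making $v_pv_q$ a type-3 edge of $G$ --- contradiction.

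\emph{Case B.} Otherwise $i^*=i'=:i_0$ and we have $|S\cap V_{i_0}|,|T\cap V_{i_0}|\ge m/s$. Because each $V_{i_0}^{(j)}$ is a clique of $G$ (type 1), no sub-cluster can meet both $S$ and $T$; thus the nonempty sub-indices split into disjoint families $J_S,J_T\subseteq[t]$. A second pigeonhole over $J_S,J_T$ gives $j^*\in J_S$, $j'\in J_T$ (automatically $j^*\neq j'$) with $|S_1|,|T_1|\ge m/(st)$, where $S_1:=S\cap V_{i_0}^{(j^*)},T_1:=T\cap V_{i_0}^{(j')}$. Then $\lambda(\tilde S_1),\lambda(\tilde T_1)\ge\frac{2m}{kn}=\frac{2c}{k\log n}>e^{-h\mu/4}$ for large $n$, and applying \cref{lemma:LargeMeasureToLargeDistance} with parameter $\mu/2$ produces $p,q$ at distance at least $2-\mu/2$; hence $|v_p-v_q|\ge 2-\mu$, meaning $v_pv_q$ is a type-2 edge of $G$ --- contradiction.

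The main obstacle is bookkeeping the distance losses of size $\mu/4$ absorbed from the diameters of the pieces: in Case B the target threshold $2-\mu$ is tight, which is why we must apply \cref{lemma:LargeMeasureToLargeDistance} with parameter $\mu/2$ rather than $\mu$, and correspondingly verify the sharper measure bound $\lambda>e^{-h\mu/4}$. In Case A the slack between $2-3\mu/2$ and $\sqrt 2-\mu$ is generous, so the weaker application of the lemma suffices. Modulo these small quantitative adjustments, the proof is a clean two-level pigeonhole followed by concentration of measure on the sphere, driven by the fact that for $h=\log n/\log\log n$ the ratio $m/n=c/\log n$ decays much more slowly than $e^{-\eps\sqrt h/4}$.
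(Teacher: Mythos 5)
Your proof is correct and follows the same overall strategy as the paper's: a two-level pigeonhole to localize the two sides of the alleged biclique, followed by \cref{lemma:LargeMeasureToLargeDistance} together with the $\mu/4$ bound on the piece diameters to exhibit a forbidden edge of $G$. Your Case B (both sides concentrated in the same $V_{i_0}$, hence in distinct sub-clusters $V_{i_0}^{(j^*)},V_{i_0}^{(j')}$) is essentially identical to the paper's treatment of that case, including the sharpened measure bound needed to apply the lemma with parameter $\mu/2$. In the cross-cluster case you diverge slightly: the paper applies \cref{lemma:LargeMeasureToLargeDistance} to each side separately (with $A=B=S$ and $A=B=T$) to obtain a near-antipodal pair $x,x'$ inside $S$ and $y,y'$ inside $T$, and then invokes the four-point geometric statement \cref{thm:BEIsK4free} to reach a contradiction; you instead apply the lemma once to the pair $(\tilde S_0,\tilde T_0)$ to produce a single cross pair at distance at least $2-3\mu/2>\sqrt2-\mu$, which is already a type-3 edge of $G$ and contradicts the biclique assumption directly. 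Your variant is a little more economical, avoiding \cref{thm:BEIsK4free} in this claim entirely (it is of course still needed for the induced-$P_{2k-1}$-freeness claim); the quantitative bookkeeping in both cases of your write-up checks out.
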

\begin{poc}
    Suppose there is a copy of $K_{2}[\frac{cn}{\log{n}}]$ in $\overline{G}$. By pigeonhole principle, there is a copy of $K_{2}[\frac{cn}{k\log{n}}]$ in $\overline{G}$ with parts $S\subseteq V_i$ and $T\subseteq V_j$.
    
    If $i=j$, then by pigeonhole principle again, there exist $a,b\in [t]$ such that $\overline{G}$ contains a copy of $K_{2}[\frac{2cn}{k^{2}\log{n}}]$ with parts $S_{a}\subseteq V_{i}^{(a)}$ and $T_{b}\subseteq V_{i}^{(b)}$. Note that the Lebesgue measures of the pieces $D_j$, $j\in[\frac{2n}{k}]$, corresponding to both $S_{a}$ and $T_{b}$ are at least $\frac{2cn}{k^{2}\log{n}}\cdot \frac{k}{2n}>e^{\frac{-h\mu}{4}}$, therefore, by~\cref{lemma:LargeMeasureToLargeDistance}, $d_{\max}(S_{a},T_{b})\ge 2-\frac{\mu}{2}$. This, together with the fact that the diameter of each piece $D_j$ is at most $\mu/4$, implies that there must be a type-2 edge $uv$ with $u\in S_{a}$, $v\in T_{b}$, a contradiction. 
    
    If $i\neq j$, then both of the Lebesgue measures of $S$ and $T$ are at least $\frac{cn}{k\log{n}}\cdot \frac{k}{2n}>e^{\frac{-h\mu}{4}}$. Therefore the diameters of $S$ and $T$ are at least $2-\frac{\mu}{2}$. We can then find $x,x'\in S\subseteq V_{i}$ and $y,y'\in T\subseteq V_{j}$ with $|x-x'|,|y-y'|\ge 2-\mu$. Moreover, by definition of $G$, all of pairs $xy,xy',x'y,x'y'\notin E(G)$ implies that all of the distances $|x-y|$, $|x-y'|$, $|x'-y|$ and $|x'-y'|$ are at most $\sqrt{2}-\mu$, contradicting~\cref{thm:BEIsK4free}.
\end{poc}

\begin{claim}
   The expected number of $\frac{k^{2}}{4}$-cliques in $\overline{G}$ is at least $\eta n^{\frac{k^{2}}{4}}$ for some $\eta>0$.
\end{claim}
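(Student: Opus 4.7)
The plan is to count $K_{k^2/4}$ cliques in $\overline{G}$ via ordered tuples $(u_{i,j})_{(i,j)\in[s]\times[t]}$ with $u_{i,j}\in V_i^{(j)}$ (recall $s=t=k/2$); any such tuple that induces an independent set in $G$ contributes a distinct $K_{k^2/4}$ clique in $\overline{G}$. Type-1 edges of $G$ are avoided automatically since we pick exactly one vertex per $V_i^{(j)}$. Type-2 edges (within-$V_i$ pairs at distance $\ge 2-\mu$) occur only when a pair lies in an antipodal cap of height $O(\mu^2)$, which by Lemma~\ref{lem:cap-UB} has measure $e^{-(1-o(1))h/2}$; union-bounding over the $s\binom{t}{2}$ within-$V_i$ pairs, the type-2 condition holds with probability $1-o(1)$.

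The heart of the argument is the type-3 condition: all $\binom{s}{2}t^2$ cross pairs must satisfy $|u_{i,j}-u_{i',j'}|\le\sqrt{2}-\mu$. I claim that the fraction of tuples $(u_{i,j})$ with $u_{i,j}\in V_i$ (ignoring the partition) for which this holds is at least a positive constant $c=c(k,\varepsilon)>0$. Writing $C_u=\{v\in\mathbb{S}^{h-1}:|u-v|\le\sqrt{2}-\mu\}$, Lemma~\ref{lemma:Measure} combined with the fact that each $V_i$ is a $\mu/4$-net of $\mathbb{S}^{h-1}$ gives $|C_v\cap V_i|\ge (1/2-\sqrt{2}\varepsilon-o(1))|V_i|$. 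The prototype $s=t=2$ case then follows by Cauchy--Schwarz:
\[ \sum_{u_1,u_2\in V_1}|C_{u_1}\cap C_{u_2}\cap V_2|^2 \;\ge\; \frac{1}{|V_1|^2}\Bigl(\sum_{v\in V_2}|C_v\cap V_1|^2\Bigr)^2 \;\ge\; (1/2-O(\varepsilon))^4\,|V_1|^2|V_2|^2. \]
For general $(s,t)$, one iterates the same Cauchy--Schwarz / Jensen (power-mean) step, pulling one $V_i$-sum at a time to the outside and reducing repeatedly to single-cap estimates; bookkeeping the resulting $\binom{s}{2}t^2$ applications of Lemma~\ref{lemma:Measure} yields $c\ge(1/2-O(\varepsilon))^{\binom{s}{2}t^2}>0$.

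Combining, there are at least $\eta_0 n^{k^2/4}$ geometrically valid ordered tuples with $u_{i,j}\in V_i$, for some $\eta_0=\eta_0(k,\varepsilon)>0$. For each such tuple (with $u_{i,1},\ldots,u_{i,t}$ automatically distinct), the probability under the random equal-size partition $V_i=V_i^{(1)}\sqcup\cdots\sqcup V_i^{(t)}$ that $u_{i,j}\in V_i^{(j)}$ simultaneously for every $(i,j)$ is $\prod_i \frac{1}{t^t}(1+o(1))=t^{-st}(1+o(1))$, another positive $k$-dependent constant. Linearity of expectation yields the expected number of $K_{k^2/4}$ cliques in $\overline{G}$ at least $\eta n^{k^2/4}$ for some $\eta=\eta(k,\varepsilon)>0$, as required. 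The main obstacle is executing the iterated Cauchy--Schwarz / Jensen step uniformly in $(s,t)$; this is combinatorially more involved than the $s=t=2$ prototype but follows the same template, and the resulting constant $c$ remains bounded away from $0$ because each reduction step introduces only a further factor of $(1/2-O(\varepsilon))$ from Lemma~\ref{lemma:Measure}.
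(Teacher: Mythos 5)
Your overall framework---counting geometrically admissible tuples $(u_{i,j})$ with $u_{i,j}\in V_i$ and then multiplying by the probability $\approx t^{-st}$ that the random partition routes $u_{i,j}$ into $V_i^{(j)}$---is essentially the paper's expectation computation, and your treatment of the type-1 and type-2 conditions is fine (the antipodal cap has height $O(\mu)$, not $O(\mu^2)$, but the measure bound $e^{-(1-o(1))h/2}$ from \cref{lem:cap-UB} is correct). The genuine gap is in the type-3 step for $k\ge 6$, i.e.\ $s\ge 3$. There you need a positive density of tuples in which \emph{every} cross pair is close, i.e.\ a positive homomorphism density of the non-bipartite graph $K_s[t]$ in the ``closeness'' graph $W$ on the sphere. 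An iterated Cauchy--Schwarz/Jensen argument that reduces everything to the single-cap estimate of \cref{lemma:Measure} only ever uses the fact that each cap $C_u$ has measure at least $1/2-O(\eps)$, i.e.\ degree information about $W$; but a graph(on) in which every degree equals $1/2$ can be bipartite and hence triangle-free, so no such argument can produce even one triple $u_1\in V_1,u_2\in V_2,u_3\in V_3$ with all three pairs close. Formally, the inequality you are implicitly invoking is the Sidorenko-type bound $t(K_s[t],W)\ge t(K_2,W)^{e(K_s[t])}$, which is false for non-bipartite $K_s[t]$; your prototype succeeds precisely because $K_2[t]=K_{t,t}$ is bipartite. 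Concretely, after one Cauchy--Schwarz step you must lower-bound $|C_{u_1}\cap C_{u_2}\cap V_3|$ for pairs $(u_1,u_2)$ that are themselves constrained to be close, and two caps of measure $1/2-O(\eps)$ need not intersect at all.

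Closing this requires geometric input about the joint behaviour of several caps, not just their individual measures. The standard route, which is what the paper's probabilistic phrasing is relying on, is that for $m=st$ independent (near-)uniform points on $\mathbb{S}^{h-1}$ the $\binom{m}{2}$ normalized inner products $\sqrt{h}\,\langle x_i,x_j\rangle$ are asymptotically independent standard Gaussians as $h\to\infty$, so the probability that all cross pairs satisfy $\langle x_i,x_j\rangle\ge \sqrt{2}\mu-\mu^2/2$ (equivalently $|x_i-x_j|\le\sqrt{2}-\mu$) tends to $(1/2-O(\eps))^{\binom{s}{2}t^2}>0$, with \cref{lemma:Measure} certifying only each marginal. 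Your final constant is the right one, but the route to it must pass through this asymptotic independence (or an equivalent conditioning/correlation argument), not through degree-only Cauchy--Schwarz.
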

\begin{poc}
  The number of $\frac{k^{2}}{4}$-cliques in $\overline{G}$ is at least the number of $\frac{k^{2}}{4}$-tuples $(v_{i}^{(j)}:i\in [s],j\in [t])$ with $v_{i}^{(j)}\in V_{i}^{(j)}$ such that the following conditions hold.
  \begin{itemize}
      \item For each $1\le i\le s$, we have  $|v_{i}^{(j_{1})}-v_{i}^{(j_{2})}|<2-\mu$ for any $1\le j_{1}<j_{2}\le t$.
      \item For each $1\le i_{1}<i_{2}\le s$, we have $|v_{i_{1}}^{(j_{1})}-v_{i_{2}}^{(j_{2})}|\le \sqrt{2}-\mu$ for any $j_{1},j_{2}\in [t]$.
  \end{itemize}
  For each $1\le i\le s$ and $1\le j\le t$, take an arbitrary vertex $a_{i}^{(j)}\in V_{i}^{(j)}$. As $V_{i}^{(j)}$, $j\in[t]$, is a random partition of $V_i$, we have by~\cref{lemma:Measure} that for every $1\le i_{1}<i_{2}\le s$ and $j_{1},j_{2}\in [t]$,
  \begin{equation*}
      \mathbb{P}\bigg[\big|a_{i_{1}}^{(j_{1})}-a_{i_{2}}^{(j_{2})}\big|\le \sqrt{2}-\mu\bigg]\ge\frac{1}{2}-\sqrt{2}\eps.
  \end{equation*}
  
  Moreover, for a pair of vertices $a_{i}^{(j_{1})}$ and $a_{i}^{(j_{2})}$ with $j_1\neq j_2$, $a_{i}^{(j_{1})}a_{i}^{(j_{2})}\in E(G)$ implies $|a_{i}^{(j_{1})}-a_{i}^{(j_{2})}|\ge 2-\mu$. Note that for any vertex $v\in V_i$, its neighbors in $V_i$ all lie in a spherical cap $C$ whose points are within distance $d=d_0\pm \mu/4$ from its centre, where $d_0^2=4\mu-\mu^2$. As the height of such a cap is $d^2/2\le 4\mu$. Thus by~\cref{lem:cap-UB}, the measure of $C$ is at most $e^{-h(1-4\mu)^2/2}$. Consequently, 
  \begin{equation*}
      \mathbb{P}[|a_{i}^{(j_{1})}-a_{i}^{(j_{2})}|< 2-\mu]\ge 1-e^{-h(1-4\mu)^2/2}>1-\eps.
  \end{equation*}
  Therefore, the probability that the selected vertices forming a copy of $\frac{k^{2}}{4}$-clique is positive, then the claim follows.
\end{poc}
Therefore with positive probability, the random graph $G$ has positive $K_\frac{k^{2}}{4}$-density.
This completes the proof of~\cref{thm:induced-intro}.

\section{Results in posets}\label{sec:Comparable}

\subsection{Incomparability graphs}

We first show how~\cref{lemma:incomparability-partition} implies~\cref{thm:main2}, which we restate for convenience.

\thmincomparabilityrestate*

\begin{proof}[Proof of~\cref{thm:main2}]
    Let $G$ be an incomparability graph on $n$ vertices with at least $cn^k$ $k$-cliques and for the sake of contradiction, suppose $G$ does not contain any copy of $K_k[q],$ where $q = \frac{c^4 n}{10^8 k^2 \log n}.$ Let $\varepsilon = \frac{c}{4}$ and apply~\cref{lemma:incomparability-partition} to obtain a partition $V(G) = V_0 \sqcup V_1 \sqcup \cdots \sqcup V_m$. Note that $\frac{n}{m} \ge t=\frac{\varepsilon^7 n}{10^{11} k^5} \ge q$ when $n$ is sufficiently large. We then consider $k$-cliques in $G$ and list three possible cases as follows.
\begin{enumerate}
    \item There are at most $|V_0| n^{k-1} \le \varepsilon n^k$ $k$-cliques containing at least one vertex in $V_{0}$.
    \item The number of $k$-cliques in $G$ such that there are at least two vertices in the same part is at most $m t^2 n^{k-2} \le t n^{k-1} \le \varepsilon n^k$. 
    \item For each $k$-clique with all vertices belonging to distinct parts $V_{i_{1}},V_{i_{2}},\ldots,V_{i_{k}}$,  note that at least one pair of these parts is inhomogeneous, for otherwise there is a copy of $K_k[t]$ and thus $K_k[q]$ in $G$. Since there are at most $\varepsilon m^2$ inhomogeneous pairs, the number of $k$-cliques in $G$ with all vertices in distinct parts among $V_1, \ldots, V_m$ is at most $\varepsilon m^2 t^2 n^{k-2} \le \varepsilon n^k$.
\end{enumerate}
Therefore, $G$ contains at most $3 \varepsilon n^k < c n^k$ $k$-cliques, which is a contradiction to our assumption.
\end{proof}

We proceed to the proof of \cref{lemma:incomparability-partition}.
\begin{proof}[Proof of~\cref{lemma:incomparability-partition}]

Let $(P,\prec)$ be a partially ordered set on $n$ elements such that $G$ is the incomparability graph of $P$. Let $<'$ be an arbitrary linear extension of $\prec$, and let $x_1<'x_{2}<'\cdots<'x_{n}$ be the enumeration of the elements of $G$ by $<'$. Let $s = \frac{10}{\varepsilon}$ and $\ell=\frac{10k}{\varepsilon}.$ We equally partition $P$ into $s$ intervals $P_1,\ldots, P_s$, where $P_i = \big\{x_{\frac{(i-1)n}{s}+1},\ldots, x_{\frac{in}{s}}\big\}$ for each $1\le i\le s$.

\begin{claim}\label{claim:FindingChain}
    Every subset of $P$ of size $\frac{\varepsilon n}{2s}$ contains $\ell$ disjoint sets $A_1,A_2,\ldots,A_\ell$ satisfying $A_1\succ A_2\succ\cdots\succ A_{\ell}$ and $|A_i| = t$ for $1\le i\le \ell$.
\end{claim}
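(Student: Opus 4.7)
The plan is to apply Lemma~\ref{lm:FHT} of Fox--Pham directly to any subset $Q \subseteq P$ of size $\frac{\varepsilon n}{2s}$. That lemma provides a clean dichotomy: either we obtain $\ell$ disjoint sets forming a decreasing chain $A_1 \succ \cdots \succ A_\ell$ with $|A_i| = \frac{|Q|}{10^4 \ell^5}$, or we obtain $\ell$ pairwise \emph{incomparable} disjoint sets with $|A_i| = \frac{|Q|}{40 \ell^2 \log n}$. Only the first (chain) conclusion is directly useful for the claim, so essentially all the content of the argument is in ruling out the second (antichain) branch.

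The main step, and the only place the assumption on $G$ enters, is to observe that the antichain outcome would contradict the hypothesis of Theorem~\ref{lemma:incomparability-partition}. Since $G$ is the incomparability graph of $P$, pairwise incomparable elements of $P$ correspond to pairwise adjacent vertices in $G$; thus $\ell$ such sets would exhibit a copy of $K_\ell[|A_i|]$ inside $G$. With the parameter choices $s = 10/\varepsilon$ and $\ell = 10k/\varepsilon \ge k$, a short calculation shows that in this branch $|A_i| \ge q = \frac{\varepsilon^4 n}{10^5 k^2 \log n}$, so we would find a copy of $K_k[q]$ in $G$, contradicting the assumption that $G$ is $K_k[q]$-free. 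Consequently, only the chain branch can occur.

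It then remains to check that the chain branch actually delivers sets of size at least $t$. Substituting $s = 10/\varepsilon$ and $\ell = 10k/\varepsilon$ into $|A_i| = \frac{|Q|}{10^4 \ell^5}$ gives a quantity on the order of $\frac{\varepsilon^7 n}{k^5}$, which comfortably exceeds $t = \frac{\varepsilon^7 n}{10^{11} k^5}$; restricting each $A_i$ to an arbitrary size-$t$ subset then proves the claim. The only real obstacle is bookkeeping the constants to verify both inequalities $|A_i| \ge q$ (antichain branch) and $|A_i| \ge t$ (chain branch) from the constants in Lemma~\ref{lm:FHT}; both are straightforward substitutions. One also needs $|Q| \ge (100\ell)^5$ so that Lemma~\ref{lm:FHT} applies, but this is automatic from the assumption that $n$ is sufficiently large.
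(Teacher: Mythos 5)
Your proposal is correct and follows essentially the same route as the paper: apply the Fox--Pham dichotomy (Lemma~\ref{lm:FHT}) to the subset, rule out the antichain branch because $\ell \ge k$ pairwise incomparable sets of size at least $q$ would yield a forbidden $K_k[q]$ in the incomparability graph, and verify that the chain branch delivers sets of size at least $t$. The constant bookkeeping you describe matches the paper's computation.
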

\begin{poc}
Let $S$ be an arbitrary subset of $P$ of size $\frac{\varepsilon n}{2s}$ and apply~\cref{lm:FHT} to the subposet of $P$ induced on $S.$ If the former conclusion of~\cref{lm:FHT} holds, we obtain $\ell$ sets $A_1 \succ A_2 \succ \dots \succ A_\ell$, each of size $\frac{|S|}{10^4 \ell^5} \ge \frac{\varepsilon^7 n}{10^{11} k^5} = t$ as needed. If the latter conclusion of~\cref{lm:FHT} holds, there are $\ell$ pairwise incomparable sets in $P$, each of size $\frac{|S|}{40 \ell^2 \log |S|} > \frac{\varepsilon^4 n}{10^5 k^2 \log n} = q$. This implies the existence of $K_k[q]$ in $G,$ contradicting our assumption and finishing the proof of the claim.
\end{poc}

Let $m_0 = \lceil \frac{(1 - \varepsilon)n}{st \ell} \rceil.$ Then, for each $i \in [s],$ iteratively using \cref{claim:FindingChain}, we can find $\ell m_0$ disjoint sets $\{B_{ij}^u\}_{j \in [m_0], u \in [\ell]}$ in $P_{i}$, such that $|B_{ij}^u|=t$ and $B_{ij}^1 \prec B_{ij}^2 \prec \dots \prec B_{ij}^\ell$ for all $i \in [s], j \in [m_0].$ Let $V_0 = P \setminus \bigcup_{i \in [s], j \in [m_0], u \in [\ell]} B_{ij}^u$ and $m = s m_0 \ell$, and we identify $V_1, \dots, V_m$ with the sets $\{B_{ij}^u\}_{i \in [s], j \in [m_0], u \in [\ell]}$. Note that $|V_0| = n - s m_0 \ell t \le \varepsilon n$ as needed. It remains to verify that at most $\varepsilon m^2$ pairs $(V_i, V_j)$ are inhomogeneous in $G,$ that is, there are at most $\varepsilon m^2$ pairs $((i, j, u), (i', j', u'))$ such that neither $B_{ij}^u \prec B_{i'j'}^{u'}$, nor $B_{i'j'}^{u'} \prec B_{ij}^u$, nor $B_{ij}^u$ and $B_{i'j'}^{u'}$ are incomparable.

Observe that the number of pairs $((i, j, u), (i', j', u'))$ with $i = i'$ is at most $s (m_0 \ell)^2 = \frac{m^{2}}{s}=\frac{\varepsilon m^{2}}{10}$. To upper bound the number of inhomogeneous pairs coming from different sets $P_i,P_{i'}$, we shall need the following claim.

  \begin{figure}[bhtp]
    \centering
   \begin{tikzpicture}
   
\draw[black, thick] (-4, 2) -- (-2.5, 2);
\draw[black, thick] (-1.75, 2) -- (-0.25, 2);
\draw[black, thick] (2, 2) -- (3.5, 2);
\draw[black, thick] (-2.25, 0) -- (-0.75, 0);
\draw[black, thick] (0.75, 0) -- (2.25, 0);

    \fill (0.5,2) circle (0.5pt);
    \fill (1.0,2) circle (0.5pt);
     \fill (1.5,2) circle (0.5pt);
\draw[red, ultra thick, ->] (-3.3, 2) .. controls (-0.2, 3) .. (2.8, 2); 

\draw[red, ultra thick, ->] (-1.75,0) .. controls (0, 0.5) .. (1.55, 0);

\draw[blue, ultra thick, ->] (-3.3, 2) -- (1.75, 0);
\draw[red, ultra thick, ->] (2.8, 2) -- (-1.75, 0);
\node[black] at (-5, 2) {$i$};
\node[black] at (-5, 0) {$i'$};
\node[black] at (2.5, 2.4) {$x$};
\node[black] at (-1.75, 0.5) {$y$};
\node[black] at (-5, 1) {$i< i'$};
\node[black] at (-3, 3) {$B_{ij}^{u}$};
\node[black] at (-2.0, 3) {$\prec$};
\node[black] at (-0.5, 3) {$\prec$};
\node[black] at (1.5, 3) {$\prec$};
\node[black] at (2.5, 3) {$B_{ij}^{v}$};
\node[black] at (-1.5, -0.5) {$B_{i'j'}^{v'}$};
\node[black] at (1.5, -0.5) {$B_{i'j'}^{u'}$};
\node[black] at (0, -0.5) {$\prec$};
\end{tikzpicture}
    \caption{\cref{claim:incomparable-pairs}}
    \label{fig:Claim5.2}
\end{figure}

\begin{claim} \label{claim:incomparable-pairs}
    For fixed pairs $(i,j), (i',j')$ with $i < i'$, the number of pairs $(u, u') \in [\ell]^2$ such that $(B_{ij}^u, B_{i'j'}^{u'})$ is inhomogeneous is at most $2 \ell - 1.$
\end{claim}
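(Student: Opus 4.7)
The plan is to classify each pair $(B_{ij}^u, B_{i'j'}^{u'})$ into one of three types and then show that the set of inhomogeneous pairs forms a monotone ``staircase'' in the $\ell \times \ell$ grid indexed by $(u, u')$, whose size is then easy to bound by $2\ell - 1$.

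First, because $<'$ is a linear extension of $\prec$ and $i<i'$, every element of $B_{ij}^u \subseteq P_i$ sits strictly before every element of $B_{i'j'}^{u'} \subseteq P_{i'}$ in $<'$, which already rules out $B_{i'j'}^{u'} \prec B_{ij}^u$. Hence each cross-pair $(a,b) \in B_{ij}^u \times B_{i'j'}^{u'}$ satisfies either $a \prec b$ or $a,b$ are incomparable, and I will call the block pair \emph{red} if every cross-pair satisfies $a\prec b$ (equivalently $B_{ij}^u \prec B_{i'j'}^{u'}$), \emph{green} if every cross-pair is incomparable, and \emph{yellow} otherwise. Using $B_{ij}^1\prec\cdots\prec B_{ij}^{\ell}$, $B_{i'j'}^1\prec\cdots\prec B_{i'j'}^{\ell}$, and transitivity, I would verify that the red region is closed under decreasing $u$ or increasing $u'$, while the green region is closed under increasing $u$ or decreasing $u'$.

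The key step is the following: if $(u_1,u_1')$ is yellow and $u_2>u_1$, $u_2'<u_1'$, then $(u_2,u_2')$ is green. Yellow at $(u_1,u_1')$ produces an incomparable witness pair $a_0 \in B_{ij}^{u_1}$, $b_0 \in B_{i'j'}^{u_1'}$; then for arbitrary $a \in B_{ij}^{u_2}$ and $b \in B_{i'j'}^{u_2'}$ we have $a_0 \prec a$ and $b \prec b_0$, so $a \prec b$ would force $a_0 \prec b_0$ by transitivity, contradicting incomparability, while $b \prec a$ is impossible by the linear extension. Hence every cross-pair at $(u_2,u_2')$ is incomparable, i.e.\ $(u_2,u_2')$ is green, in particular not yellow.

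Finally, the bound follows quickly. Within each row $u$, the yellow cells form a contiguous interval $[a_u, b_u]$, because the red and green regions in row $u$ are $[r(u), \ell]$ and $[1, \gamma(u)]$ respectively by the monotonicity above. The key step then forces $a_{u_2} \ge b_{u_1}$ whenever both rows $u_1 < u_2$ carry yellow cells. Writing the yellow rows as $u_1 < \cdots < u_k$ with intervals $[a_s, b_s]$, a short telescoping estimate yields
\[ \sum_{s=1}^{k} (b_s - a_s + 1) \le (b_k - a_1) + k \le (\ell-1) + \ell = 2\ell - 1, \]
as desired. The main obstacle is really just to carry out the three-way classification cleanly and verify the monotonicity and staircase properties; once those are in place, the final bound is a short combinatorial computation.
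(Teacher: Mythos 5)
Your proof is correct and rests on the same structural fact as the paper's: no two inhomogeneous pairs $(u,u')$, $(v,v')$ can have $u<v$ and $u'>v'$. You establish this by extracting an \emph{incomparable} witness from the cell with smaller $u$ and larger $u'$ and propagating incomparability outward, whereas the paper extracts a \emph{comparable} witness $x\prec y$ from the other cell and propagates comparability via the chains and the linear extension; the paper then finishes in one line by observing that this exclusion forces distinct inhomogeneous pairs to have distinct sums $u+u'\in[2,2\ell]$, which replaces your (equally valid, slightly longer) red/green/yellow staircase decomposition and telescoping count.
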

Before proving the claim, we finish the proof of the lemma. The number of inhomogeneous pairs coming from different sets $P_i,P_{i'}$ is at most $s^2 m_0^2 (2\ell - 1) \le \frac{2m^{2}}{\ell} < \frac{\varepsilon m^{2}}{5}$. Thus the total number of inhomogeneous pairs is at most $\varepsilon m^2,$ as required.

We are left to prove \cref{claim:incomparable-pairs}.
    Let $Z$ denote the set of pairs $(u, u') \in [\ell]^2$ such that $(B_{ij}^u, B_{i'j'}^{u'})$ is inhomogeneous. We will show that for any distinct pairs $(u, u'), (v, v') \in Z$, it holds that 
    $$u + u' \neq v + v'.$$ 
    Indeed, this would prove the claim since trivially $u + u' \in [2, 2\ell]$ for any $(u, u') \in Z$.

    Consider two distinct pairs $(u, u'), (v, v') \in Z$ and suppose that $u + u' = v + v'$. Without loss of generality, assume that $u < v$ which then implies $v' < u'.$ Since $u < v,$ it follows that $B_{ij}^u \prec B_{ij}^v.$ Because the pair $(B_{ij}^v, B_{i'j'}^{v'})$ is not incomparable, there exist some $x \in B_{ij}^v, y \in B_{i'j'}^{v'}$ which are comparable in $P.$ Since $i < i', x \in P_i, y \in P_{i'}$ and $x <' y$ by the assumed linear ordering, it follows that $x \prec y.$ Finally, since $v' < u',$ we have $y \prec B_{i'j'}^{u'}$ and by transitivity it follows that $B_{ij}^u \prec x \prec y \prec B_{i'j'}^{u'}$ (see~\cref{fig:Claim5.2}), contradicting our assumption that $(u, u') \in Z.$
\end{proof}

\subsection{\texorpdfstring{$r$}{r}-comparability graphs}
In this subsection, we prove~\cref{thm:main1}, which we restate here.

\thmmainrestate*

\begin{proof}
First we prove the case $r=1$. For two elements $x,y$, we write $x\prec  y$ if $x\prec_i y$ for every $i\in[r]$. For a copy of $K_{2h-1}$ in $G$ with vertex set $\{x_{1},x_{2},\ldots,x_{2h-1}\}$, we can assume the order is $x_{1}\prec x_{2}\prec\cdots\prec x_{2h-1}$. By the pigeonhole principle, there exists an $(h-1)$-tuple $(x_2,x_4,\ldots,x_{2h-2})$ in $G$ such that there are at least $\varepsilon n^{h}$ choices of $h$-tuples $(x_1,x_3,\ldots,x_{2h-1})$ where $x_{1}x_{2}\cdots x_{2h-1}$ together form a copy of $K_{2h-1}$. Fix such an $(h-1)$-tuple $(x_{2},x_{4},\ldots,x_{2h-2})$, let $D_{1}:=\{x|x\prec x_2\}$, $D_{h}:=\{x|x_{2h-2}\prec x\}$, and $D_{i}:=\{x|x_{2i-2}\prec x\prec x_{2i}$\} for any $2\le i\le h-1$. Note that if $x_1 \cdots x_{2h-1}$ form a copy of $K_{2h-1}$, then $x_{2i-1}\in D_i$ by our above assumption, which also yields that $\prod_{i=1}^h|D_i|\ge \varepsilon n^{h}$. Obviously, $|D_{i}|\le n$, therefore $|D_i|\ge \varepsilon n$ holds for all $1\le i\le h$. Moreover, for each $1\le i<j\le h$ and for any pair of vertices $a\in D_{i}, b\in D_{j}$, we have $a\prec x_{2i}\prec b$, therefore $G[\bigcup_{i=1}^{h}D_{i}]$ contains a copy of $K_{h}[\varepsilon n]$.

Next we proceed to the proof of the general cases $r\ge 2$. Let $K$ be an arbitrary  $k$-clique in $G$. For each vertex $x\in V(K)$, let $p_i(x)$ be the maximum number $t$ such that there is a chain of the form $x=a_{1}\prec_{i}a_{2}\prec_{i}a_{3}\prec_{i}\cdots\prec_{i}a_{t}$, where $a_{1},a_{2},\ldots,a_{t}$ are in $V(K)$. Denote $p(x) = (p_1(x), \dots, p_r(x)).$ We first prove the following claim.

\begin{claim}\label{cl:EZ}
For a given $k$-clique $K$ and any distinct $x,y\in V(K)$, we have $p(x) \neq p(y).$
\end{claim}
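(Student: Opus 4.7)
The plan is to exploit the fact that $x$ and $y$ being in a clique of $G$ forces them to be comparable in at least one of the partial orders, and then to run a Mirsky-style chain extension in that order.

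Concretely, here is the outline. Fix distinct $x, y \in V(K)$. Since $K$ is a clique in $G = \bigcup_{i \in [r]} G(\prec_i)$, the edge $xy$ lies in some $G(\prec_i)$, so $x$ and $y$ are comparable in $\prec_i$; without loss of generality assume $x \prec_i y$. I will argue that the $i$-th coordinates satisfy $p_i(x) > p_i(y)$, which already yields $p(x) \neq p(y)$.

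To prove this coordinate strict inequality, let $t = p_i(y)$ and let
\[
y = a_1 \prec_i a_2 \prec_i \cdots \prec_i a_t
\]
be a maximal chain in $\prec_i$ inside $V(K)$ realizing $p_i(y)$. Since $x \prec_i y = a_1$, prepending $x$ gives the chain $x \prec_i a_1 \prec_i a_2 \prec_i \cdots \prec_i a_t$ of length $t+1$ in $V(K)$, so by definition $p_i(x) \geq t+1 > p_i(y)$. Hence $p(x)$ and $p(y)$ differ in their $i$-th entry, finishing the claim.

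There is essentially no technical obstacle; the whole statement is a one-line transitivity argument, with the only subtle point being the observation that adjacency in $G$ must be realized in \emph{some} specific $\prec_i$, which is exactly what allows us to extend a chain. The claim is engineered to feed into the subsequent pigeonhole: with $|V(K)| = k = (2h-2)^r + 1$ distinct vectors $p(x) \in \mathbb{Z}_{\geq 1}^{r}$, some coordinate must reach value at least $2h-1$, producing a long chain in one of the $\prec_i$, which will then be converted into the desired $K_h$-blowup via the $r=1$ case already handled at the start of the proof.
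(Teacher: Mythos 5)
Your proof is correct and is essentially identical to the paper's argument: adjacency in $G$ forces comparability in some $\prec_i$, and prepending $x$ to a maximal $\prec_i$-chain starting at $y$ inside $V(K)$ gives $p_i(x) > p_i(y)$. No issues.
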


\begin{poc}
As $x$ and $y$ are adjacent in $G$, they are comparable in one of the partial orders. Without loss of generality, assume that $x\prec_{1} y$. By definition of $p_{i}(y)$, we can find a chain of the form $y=a_{1}\prec_{1}a_{2}\prec_{1}\cdots\prec_{1}a_{p_{1}(y)}$, such that all the elements $a_{2},a_{3},\ldots,a_{p_{1}(y)}$ are in $V(K)$. However, then we have $x\prec_{1}y\prec_{1}a_{2}\cdots\prec_{1}a_{p_1(y)}$, which implies that $p_1(x)>p_1(y)$. Therefore, $p(x) \neq p(y)$.
\end{poc}

Let $G_{i}=G(\prec_i)$ be the comparability graph of $\prec_{i}$. For each $k$-clique $K$ in $G$, by definition we have $p_i(x) \ge 1,$ for all $x \in V(K), i \in [r]$. By~\cref{cl:EZ}, since $k > (2h-2)^r,$ there must exist $x \in V(K)$ and $i \in [r]$ such that $p_i(x) \ge 2h-1$. In particular, this implies that $G_i$ contains a $(2h-1)$-clique inside $K$. By the pigeonhole principle again, there exists some index $j\in [r]$ such that for at least $\frac{\varepsilon n^{k}}{r}$ many $k$-cliques $K$ in $G$, there is a $(2h-1)$-clique in $G_j$ inside $K$. On the other hand, each $(2h-1)$-clique in $G_j$ can be contained in at most $n^{k-(2h-1)}$ many $k$-cliques in $G$, which yields that totally there are at least $\frac{\varepsilon n^{2h-1}}{r}$ many cliques of size $2h-1$ in $G_{j}$. Applying the result in the case $r=1,$ we obtain a copy of $K_{h}[\frac{\varepsilon n}{r}]$ in $G_{j}$.
\end{proof}

\section{Hereditary graphs and \texorpdfstring{$B_k$}{Bk} property}\label{sec:Hereditary}
\subsection{\texorpdfstring{$B_2$}{B2} property implies \texorpdfstring{$B_k$}{Bk} property in hereditary graphs}

\begin{proof}[Proof of~\cref{thm:B2ImpliesBk}]
Assume that $\mathcal{G}$ is a  hereditary family which has the $B_2$ property with function $f$. For any $\varepsilon>0$, we set a function \[h(\varepsilon)=\delta\ll\varepsilon_0\ll\varepsilon_1\ll\varepsilon_2\ll\varepsilon_3\ll\varepsilon_4\ll\varepsilon.\] We will show that $\mathcal{G}$ has the $B_k$ property with respect to the function $h$.

Let $G$ be an $n$-vertex graph in the family $\mathcal{G}$ such that $G$ has at least $\varepsilon n^k$ $k$-cliques. Let $m=\frac{4}{\varepsilon}$. If $n\le m$, then we are done by Nikiforov's result~\cite{2008BLMSNikiforov}. Otherwise, by~\cref{lem:psrl}, there exists $M=M(m,\varepsilon_0)$ such that $V(G)$ admits an $\varepsilon$-regular equitable partition $\mathcal{P}$ with $K+1$ parts $V_{0},V_{1},\ldots,V_{K}$, where $m \le K \le M$. Setting $|V_1|=q$, we can see that $(1-\varepsilon_0)\frac{n}{K}\le q\le \frac{n}{m}$.

We then construct an auxiliary graph $G'$ with vertex set $\{V_{1},V_{2},\ldots,V_{K}\}$, and a pair of vertices $V_{i},V_{j}$ is adjacent in $G'$ if and only if $(V_i,V_j)$ is $\varepsilon_0$-regular and $d(V_i,V_j)>\frac{\varepsilon}{2}$.

\begin{claim}
$G'$ contains a $k$-clique.
\end{claim}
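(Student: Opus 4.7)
The plan is a routine counting argument via regularity: I will show that the overwhelming majority of the $\varepsilon n^k$ copies of $K_k$ in $G$ must lie on $k$ distinct parts of the partition that pairwise satisfy the regularity-plus-density condition defining edges of $G'$. Specifically, the strategy is to classify each $K_k$ in $G$ as ``bad'' if it either touches $V_0$, has two vertices in the same cluster, uses an edge across an $\varepsilon_0$-irregular pair, or uses an edge across a regular pair of density at most $\varepsilon/2$; otherwise it is ``good''. Good $k$-cliques span $k$ clusters that form a $K_k$ in $G'$, so it suffices to show that the number of bad $k$-cliques is strictly less than $\varepsilon n^k$.

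I will bound the four bad categories separately. First, $k$-cliques intersecting $V_0$ contribute at most $|V_0|\cdot n^{k-1}\le \varepsilon_0 n^k$. Second, using $\sum_{i\ge 1}|V_i|\le n$ (hence $Kq\le n$), the number of $k$-cliques with two vertices in a common $V_i$ is at most $K\binom{q}{2}n^{k-2}\le \tfrac{1}{2}(Kq)q\,n^{k-2}\le \tfrac{n^k}{2m}=\tfrac{\varepsilon}{8}n^k$. Third, the number of $k$-cliques containing an edge across an $\varepsilon_0$-irregular pair is at most $\varepsilon_0 K^2 q^2 n^{k-2}\le \varepsilon_0 n^k$. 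Fourth, the number of $k$-cliques containing an edge in a regular pair of density at most $\varepsilon/2$ is at most $\binom{K}{2}q^2\cdot\tfrac{\varepsilon}{2}\cdot n^{k-2}\le \tfrac{\varepsilon}{4}n^k$. Summing these four estimates and using the hierarchy $\varepsilon_0\ll\varepsilon$ yields a total strictly smaller than $\varepsilon n^k$.

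Therefore at least one $k$-clique $\{x_1,\ldots,x_k\}$ in $G$ is good: its vertices lie in $k$ distinct parts $V_{i_1},\ldots,V_{i_k}$ such that every pair $(V_{i_a},V_{i_b})$ is $\varepsilon_0$-regular with density exceeding $\varepsilon/2$. By the definition of $G'$, the vertices $V_{i_1},\ldots,V_{i_k}$ form a $k$-clique in $G'$, which establishes the claim.

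The only point that needs mild care is the second category, where one has to exploit $Kq\le n$ rather than treat $K$ as an arbitrarily large constant depending on $M$; choosing $m=4/\varepsilon$ at the outset of the proof is precisely what makes this term acceptable. There is no conceptual obstacle beyond this bookkeeping.
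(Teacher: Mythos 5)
Your proof is correct and follows essentially the same route as the paper: the same four-way classification of $k$-cliques (touching $V_0$, repeated cluster, irregular pair, low-density pair) with the same counting bounds, concluding that a surviving good clique yields a $K_k$ in $G'$. The only difference is cosmetic bookkeeping in the second and fourth categories, where your constants are slightly sharper but the argument is identical.
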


\begin{poc}
Suppose that $G'$ is $K_{k}$-free, then for any $k$-clique in $G$ with vertex set $\{v_1,v_2,\cdots,v_k\}$, at least one of the following four types occurs:

\begin{itemize}
\item At least one of the vertices $v_1,v_2,\ldots,v_k$ belongs to $V_0$. Note that $|V_0|\le \varepsilon_0 n$, the number of $k$-tuples of this type is at most $\varepsilon_0 n^k$.
\item At least two of the vertices $v_1,v_2,\ldots,v_k$ belong to some $V_{i}$, where $i\in [K]$. Note that $|V_i|\le \frac{n}{m}$, the number of the tuples of this type is at most $\frac{n^{k}}{m} \le \frac{\varepsilon}{4} n^k$.
\item There exist some $v_i\in V_a, v_j\in V_b$ such that $V_a$ and $V_b$ are not $\varepsilon_0$-regular, where $i,j\in [k]$ and $a,b\in [K]$. Note that there are at most $\varepsilon_0 K^{2}$ pairs $(V_{i},V_{j})$ being not $\varepsilon_0$-regular, therefore the number of $k$-tuples of this type is at most $\varepsilon_0 n^k$.
\item There exist some $v_i\in V_a, v_j\in V_b$ such that $d(V_a,V_b)<\frac{\varepsilon}{2}$, where $i,j\in [k]$ and $a,b\in [K]$. Then, for fixed $V_a,V_b$, there are at most $\frac{\varepsilon q^{2}}{2}$ choices of $v_i,v_j$, hence the number of $k$-tuples of this type is at most $m^2 \cdot \frac{\varepsilon q^2}{2} \cdot n^{k-2} \le \frac{\varepsilon n^k}{2}$.
\end{itemize}

Hence, the total number of $k$-cliques in $G$ is less than $\varepsilon n^k$, a contradiction.
\end{poc}

Without loss of generality, we can assume that $V_{1},V_{2},\ldots,V_{k}$ form a clique in $G'$. We consider two cases depending on whether some $V_i$ is locally dense.

\medskip

\noindent\textbf{Case 1.} At least one of the clusters $G[V_1],\ldots,G[V_k]$ is $(\varepsilon_1,\varepsilon_2)$-dense. Without loss of generality, we assume that $G[V_1]$ is $(\varepsilon_1,\varepsilon_2)$-dense. We will use induction to show that for $2\le t\le k$, we can find a copy of $K_t[f(\frac{\varepsilon_{2}}{3})^{t-1}q]$ in $V_1$. First, since $G[V_1]$ has density at least $\varepsilon_2$ and $B_2$ property, we can find a $K_2[f(\frac{\varepsilon_{2}}{3})q]$ in $V_1$. Suppose that we have found a copy of $K_t[f(\frac{\varepsilon_{2}}{3})^{t-1}q]$ in $V_1$ with parts $P_1,\ldots,P_t$. Since $G[V_1]$ is $(\varepsilon_1,\varepsilon_2)$-dense and $\eps_1\ll \eps_2$, $G[P_1]$ has density at least $\varepsilon_2$, then we can find a copy of $K_2[f(\frac{\varepsilon_{2}}{3})|P_1|]$ in $P_1$. Thus, this biclique together with $P_2,\ldots,P_t$ contains a copy of $K_{t+1}[f(\frac{\varepsilon_{2}}{3})^{t}q]$ in $V_1$. Taking $t=k$, we get the desired large blowup of $K_k$.

\medskip

\noindent\textbf{Case 2.} We may then assume that none of $G[V_1],\ldots,G[V_k]$ is $(\varepsilon_1,\varepsilon_2)$-dense, then for each $i\in [k]$, we can pick a subset $U_i' \subseteq V_i$ such that $|U_i'|\ge \varepsilon_1q$ and $G[U_{i}']$ has density at most $\varepsilon_2$. Let $q_0=\lceil\varepsilon_1q\rceil$. Pick a subset $U_i$ of $U_i'$  among all possible subgraphs with size $q_0$ which achieve the minimal density, then $G[U_{i}]$ has density at most $\varepsilon_2$.  We will show that there is a copy of $K_k[\delta n]$ such that the $i$th-part is contained in $U_{i}$.

\begin{claim}\label{clm:bic}
For any distinct subsets $A_i\subseteq U_i$, $A_j\subseteq U_j$, with $i,j\in [k]$ and $|A_i|=|A_j|\ge \varepsilon_{3}q_{0}$, we can find subsets $B_i\subseteq A_i$, $B_j\subseteq A_j$ with $|B_i|=|B_j|\ge \varepsilon_{4}|A_{i}|$, such that $(B_i,B_j)$ forms a biclique. 
\end{claim}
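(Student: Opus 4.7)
The plan is to apply the $B_2$ hypothesis of $\mathcal{G}$ to the induced subgraph $G[A_i\cup A_j]\in\mathcal{G}$, and then to use the sparsity of $G[U_\ell]$ (coming from the minimum-density choice of $U_\ell$ inside $U_\ell'$) to rule out the degenerate case where the resulting biclique lies entirely inside a single $A_\ell$.

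First I would check that $(A_i, A_j)$ remains bipartitely dense. Since $(V_i, V_j)$ is $\varepsilon_0$-regular in $G$ with density greater than $\varepsilon/2$, and $|A_i|=|A_j|\geq \varepsilon_3 q_0 \gg \varepsilon_0 q$, the Slicing Lemma (\cref{lem:slicing}) gives $d_G(A_i, A_j)\geq \varepsilon/2-\varepsilon_0$. Hence $G[A_i\cup A_j]$, a graph on $2|A_i|$ vertices, has at least $(\varepsilon/2-\varepsilon_0)|A_i|^2$ edges and so its edge density is at least $\varepsilon/5$. Since $\mathcal{G}$ is hereditary, the $B_2$ property produces a biclique $K_2[t]$ inside $G[A_i\cup A_j]$ with $t\geq 2f(\varepsilon/5)|A_i|$; write $X, Y$ for its two parts. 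By pigeonhole each of $X$ and $Y$ has at least $t/2$ vertices in one of $A_i, A_j$.

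If the two majorities lie on opposite sides (say $|X\cap A_i|, |Y\cap A_j|\geq t/2$, or the symmetric variant), then taking $\lfloor t/2\rfloor$-element subsets $B_i\subseteq X\cap A_i$ and $B_j\subseteq Y\cap A_j$ immediately delivers the desired biclique between $A_i$ and $A_j$ of size at least $f(\varepsilon/5)|A_i|\geq \varepsilon_4|A_i|$, provided $\varepsilon_4\leq f(\varepsilon/5)$, which is compatible with the hierarchy $\varepsilon_4\ll \varepsilon$.

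The main difficulty is ruling out the ``same-side'' case in which both $X$ and $Y$ put their majorities inside a single $A_\ell$, producing a $K_{t/2,t/2}$ entirely inside $A_\ell$. Here I would invoke that $A_\ell\subseteq U_\ell$ and, by the minimum-density selection of $U_\ell$, $e(G[A_\ell])\leq e(G[U_\ell])\leq \varepsilon_2 q_0^2/2$. On the other hand, the same-side biclique contributes at least $(t/2)^2\geq f(\varepsilon/5)^2|A_\ell|^2\geq f(\varepsilon/5)^2\varepsilon_3^2 q_0^2$ edges inside $A_\ell$, forcing $2f(\varepsilon/5)^2\varepsilon_3^2\leq \varepsilon_2$. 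Since the constant hierarchy at the beginning of the proof of~\cref{thm:B2ImpliesBk} lets us choose $\varepsilon_2$ arbitrarily small compared to $\varepsilon_3^2 f(\varepsilon/5)^2$ (within the stated $\varepsilon_2\ll \varepsilon_3$), this is a contradiction, so only the opposite-sides case can occur, and the claim follows.
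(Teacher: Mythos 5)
Your proposal is correct and follows essentially the same route as the paper's proof: apply the $B_2$ property to the dense induced subgraph $G[A_i\cup A_j]$, pigeonhole the two parts of the resulting biclique onto $A_i$ and $A_j$, and rule out the same-side case by noting that such a biclique would force $e(G[U_\ell])\gtrsim \varepsilon_3^2 f(\cdot)^2 q_0^2$, contradicting the density bound $\varepsilon_2$ via the hierarchy $\varepsilon_2\ll\varepsilon_3$. The only differences are cosmetic (explicit invocation of the Slicing Lemma and slightly different density constants).
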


\begin{poc}
Since $U_i$ and $U_j$ are $\varepsilon_0$-regular with density at least $\varepsilon/2$ and $|A_i|,|A_j|\ge \varepsilon_3q_0$, we have $d(A_i,A_j)\ge \frac{\varepsilon}{4}$. Hence, $G[A_i\cup A_j]$ has edge density at least $\frac{\varepsilon}{16}$ since $|A_i|=|A_j|$. Thus by the $B_{2}$ property, we can find a copy of $K_2[f(\frac{\varepsilon}{16})|A_i|]$ with parts $X,Y$ in $G[A_i\cup A_j]$.

By symmetry, assume $|X\cap A_i|\ge \frac{|X|}{2}$. Let $X_0=X\cap A_i$. If $|Y\cap A_i|\ge \frac{|Y|}{2}$, letting $Y_0=Y\cap A_i$, we have

\[\varepsilon_2\ge \frac{e(G[U_i])}{\binom{|U_i|}{2}}\ge \frac{e(X_0,Y_0)}{|U_i|^2}\ge \frac{|X||Y|}{4|U_i|^2}\ge \frac{1}{4}\varepsilon_3^2f\left(\frac{\varepsilon}{16}\right)^2,\]
which is a contradiction. Therefore, we have $|Y\cap A_j|\ge \frac{|Y|}{2}$. Let $Y_1=Y\cap A_j$, then $X_0,Y_1$ form a biclique. Finally, just pick $B_i\subseteq X_0$, $B_j\subseteq Y_1$ with $|B_i|=|B_j|\ge \varepsilon_4|A_i|$.
\end{poc}

\begin{claim}\label{clm:sub}
For any subgraph $H$ of $K_k$ on vertex set $[k]$, there are sets $W_{i} \subseteq U_i$ such that

\begin{itemize}
\item $|W_1|=\cdots=|W_k|\ge \varepsilon_4^{e(H)}q_0$; and

\item if $i$ and $j$ are adjacent in $H$, then $(W_i,W_j)$ forms a biclique.

\end{itemize}
\end{claim}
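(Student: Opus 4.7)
The plan is to prove Claim~\ref{clm:sub} by induction on $e(H)$, with Claim~\ref{clm:bic} as the workhorse.

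For the base case $e(H)=0$, simply take $W_i = U_i$ for each $i \in [k]$, and uniformly shrink to a common size $q_0$; there are no edge conditions to verify.

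For the inductive step, suppose the statement holds for every subgraph of $K_k$ with at most $e-1$ edges, and let $H$ have $e(H)=e \ge 1$. Pick an arbitrary edge $ij \in E(H)$, and apply the inductive hypothesis to $H' = H - ij$ to obtain sets $W_1', \dots, W_k'$ with $W_\ell' \subseteq U_\ell$, $|W_1'| = \cdots = |W_k'| \ge \varepsilon_4^{e-1} q_0$, and such that $(W_a',W_b')$ is a biclique whenever $ab \in E(H')$. Because the hierarchy $\varepsilon_3 \ll \varepsilon_4 \ll \varepsilon$ is flexible (we may assume $\varepsilon_4^{\binom{k}{2}} \ge \varepsilon_3$), the common size satisfies $|W_i'|, |W_j'| \ge \varepsilon_3 q_0$, so Claim~\ref{clm:bic} applies to the pair $(W_i', W_j')$. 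It yields subsets $B_i \subseteq W_i'$, $B_j \subseteq W_j'$ with $|B_i| = |B_j| \ge \varepsilon_4 |W_i'| \ge \varepsilon_4^{e} q_0$ that form a biclique.

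To finish, set $W_i = B_i$, $W_j = B_j$, and for every other $\ell \in [k]\setminus\{i,j\}$ pick an arbitrary subset $W_\ell \subseteq W_\ell'$ of size $|B_i|$. The common size is at least $\varepsilon_4^{e} q_0$ as required. The edge $ij$ of $H$ is handled by construction. For any other edge $ab \in E(H) = E(H') \cup \{ij\}$, the pair $(W_a', W_b')$ was a biclique by induction, and taking subsets preserves this property, so $(W_a, W_b)$ is a biclique as well.

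The main (and really only) obstacle is a bookkeeping one: making sure that after $\binom{k}{2}$ potential rounds of shrinking, the sets are still large enough for Claim~\ref{clm:bic} to be applicable, i.e.\ that $\varepsilon_4^{e(H)-1} q_0 \ge \varepsilon_3 q_0$ throughout. This is arranged by the parameter hierarchy fixed at the start of the proof of Theorem~\ref{thm:B2ImpliesBk}; no new ideas are needed beyond the inductive scheme and a direct appeal to Claim~\ref{clm:bic}.
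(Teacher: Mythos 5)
Your proof is correct and follows essentially the same route as the paper: induction on $e(H)$, with the base case $W_i = U_i$, removal of one edge, an application of Claim~\ref{clm:sub}'s inductive hypothesis followed by Claim~\ref{clm:bic} on the pair corresponding to the removed edge, and a uniform shrinking of the remaining sets. Your explicit remark that the hierarchy must guarantee $\varepsilon_4^{\binom{k}{2}} \ge \varepsilon_3$ for Claim~\ref{clm:bic} to stay applicable is a point the paper leaves implicit, but it is handled by the stated parameter ordering.
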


\begin{poc}
We use induction on the number of edges in $H$. When $H$ is a empty graph, taking $W_i=U_i$ suffices.

Assume we have proven the claim for $e(H) \le s$. When $e(H) = s+1$, let $H'$ be the graph obtained by removing one edge $e$ in $H$. By symmetry, we may assume that $e=\{1,2\}$. By the induction hypothesis, we can find sets $W'_{i} \subseteq U_i$ such that

\begin{itemize}
\item $|W'_1|=\cdots=|W'_k|\ge \varepsilon_4^{|H|-1}q_0$; and

\item if $i$ and $j$ are adjacent in $H'$, then $(W'_i,W'_j)$ forms a biclique.
\end{itemize}

By \cref{clm:bic}, we can find a subset $W_1$ of $W_1'$, and subset $W_2$ of $W_2'$ with $|W_1|=|W_2|\ge \varepsilon_4|W_1'|$ such that $(W_1,W_2)$ is a biclique. Finally, for $3\le i\le k$, choose a subset $W_i$ of $W_i'$ with size $|W_1|$. It is obvious that $W_1,\ldots,W_k$ satisfy the condition.
\end{poc}

Applying \cref{clm:sub} with $H=K_k$ shows that we can find a $K_k[\delta n]$ such that the $i$th-part is contained in $U_i$, finishing the proof.
\end{proof}

\subsection{Graphs with bounded VC-dimension} \label{subsec:vc}

Given a set system $\mathcal{F}\subseteq 2^{X},$ a set $S \subseteq X,$ is said to be \emph{shattered} by $\mathcal{F}$ if for every $B \subseteq S,$ there exists a set $F \in \mathcal{F}$ such that $F \cap S = B.$ The VC-dimension of $\mathcal{F}$ is the size of the largest set shattered by $\mathcal{F}.$ Given a graph $G=(V,E)$, for any vertex $v\in V$, let $N_{G}(v)$ be the set of its neighbors. The VC-dimension of $G$ is defined to be the VC-dimension of the set system $\{N_{G}(v)\subseteq V(G):v\in V(G)\}.$

We start with a simple proposition about graphs with VC-dimension $1$.
\begin{prop}\label{Claim:ForbiddenGraphsVC1}
Let $G$ be an $n$-vertex graph with VC-dimension $1$, then the following holds. 
\begin{enumerate}
    \item[\textup{(1)}] If $G$ contains a copy of a triangle with vertices $x_{1},x_{2},x_{3}$, then any vertex $v\in V(G)\setminus\{x_{1},x_{2},x_{3}\}$ is adjacent to at least $2$ vertices of $x_{1},x_{2},x_{3}$.
    \item[\textup{(2)}] If there is a copy of $P_{5}:= y_{1}y_{2}y_{3}y_{4}y_{5}$, then at least one of the pairs $y_{1}y_{4},y_{2}y_{4},y_{2}y_{5}$ is adjacent.
\end{enumerate}
\end{prop}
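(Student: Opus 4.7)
The plan is to argue by contraposition in each part: assume the stated conclusion fails and exhibit a $2$-element subset of $V(G)$ that is shattered by the neighborhood system $\{N_G(v) : v\in V(G)\}$, contradicting the hypothesis $\VC(G)=1$. Recall that shattering a pair $\{a,b\}$ amounts to producing four witness vertices $v_\emptyset, v_{\{a\}}, v_{\{b\}}, v_{\{a,b\}}$, automatically distinct since they realize distinct intersections with $\{a,b\}$, such that $N(v_S)\cap\{a,b\}=S$ for each $S\subseteq\{a,b\}$.

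For (1), suppose for contradiction that some $v\in V(G)\setminus\{x_1,x_2,x_3\}$ is adjacent to at most one of the triangle vertices. Then there exist two triangle vertices $x_i, x_j$ both non-adjacent to $v$; let $x_k$ be the remaining triangle vertex. I will shatter $\{x_i,x_j\}$ by choosing $v_\emptyset=v$, $v_{\{x_i\}}=x_j$, $v_{\{x_j\}}=x_i$, and $v_{\{x_i,x_j\}}=x_k$. The required intersections are witnessed respectively by the two non-edges from $v$ to $\{x_i,x_j\}$; by the triangle edge $x_ix_j$ combined with the absence of loops; and by the two remaining triangle edges. Since the four witnesses are visibly distinct, $\VC(G)\ge 2$, a contradiction.

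For (2), suppose that none of $y_1y_4, y_2y_4, y_2y_5$ is an edge. I will shatter the pair $\{y_2,y_4\}$ with $v_\emptyset=y_2$ (using loop-freeness together with the non-edge $y_2y_4$), $v_{\{y_2\}}=y_1$ (using the edge $y_1y_2$ and the non-edge $y_1y_4$), $v_{\{y_4\}}=y_5$ (using the non-edge $y_2y_5$ and the edge $y_4y_5$), and $v_{\{y_2,y_4\}}=y_3$ (using the two path edges $y_2y_3$ and $y_3y_4$). The witnesses $y_1,y_2,y_3,y_5$ are distinct, again forcing $\VC(G)\ge 2$ and yielding the desired contradiction.

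Both parts are essentially routine unpackings of the definition of VC-dimension; the only nontrivial step is identifying the right pair to shatter, namely $\{x_i,x_j\}$ in (1) and $\{y_2,y_4\}$ in (2), so that the ambient triangle or path structure simultaneously realizes all four intersection patterns. I do not foresee any real obstacle once these pairs are chosen.
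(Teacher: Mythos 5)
Your proposal is correct and follows exactly the paper's argument: the paper also shatters the pair of triangle vertices non-adjacent to $v$ in (1) and the pair $\{y_2,y_4\}$ in (2), merely omitting the explicit list of witness vertices that you spell out. No issues.
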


\begin{proof}
To see (1), if a vertex $v\in V(G)\setminus\{x_{1},x_{2},x_{3}\}$ is not adjacent to two vertices, say $x_1$ and $x_2$, then the set $\{x_1, x_2\}$ is shattered. To see (2), note that if none of the edges $y_{1}y_{4},y_{2}y_{4},y_{2}y_{5}$ are present, then the set $\{y_2, y_4\}$ is shattered.
\end{proof}

We need the following regularity lemma for graphs of bounded VC-dimension.

\begin{theorem}[\cite{2019DCGVC}]\label{thm:VCdimRegularity}
    Let $\gamma \in (0,\frac{1}{4})$ and $G=(V,E)$ be an $n$-vertex graph with VC-dimension $d$. Then $V(G)$ has an equitable partition $V(G)=V_{1}\cup\cdots\cup V_{K}$ with $\frac{8}{\gamma}\le K\le c\big(\frac{1}{\gamma}\big)^{2d+1}$ parts such that all but a $\gamma$-fraction of the pairs of parts are $\gamma$-homogeneous, where $c=c(d)$ is a constant depending only on $d$.    
\end{theorem}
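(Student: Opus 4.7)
The plan is to prove this VC-regularity lemma via the pattern-sampling framework of Alon--Fischer--Newman, whose strength comes from two facts about set systems of VC-dimension $d$: the Sauer--Shelah lemma, which bounds the number of distinct traces on any $m$-subset by $\sum_{i=0}^{d}\binom{m}{i}=O(m^{d})$, and the VC $\epsilon$-approximation theorem (Vapnik--Chervonenkis, with the sharper bound of Haussler), which guarantees a sample $T\subseteq V$ of size $s=O(d/\epsilon^{2})$ such that $\bigl||A\cap T|/|T|-|A|/n\bigr|\le \epsilon$ simultaneously for every range $A$ of the neighborhood system and its bounded Boolean combinations.

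First I would set $\epsilon=c_{1}\gamma$ with $c_{1}=c_{1}(d)$ small, choose such a $T$ for the neighborhood system $\{N(v):v\in V(G)\}$, and form the pattern partition $\mathcal{Q}=\{Q_{1},\dots,Q_{M}\}$ via $u\sim v\iff N(u)\cap T=N(v)\cap T$. By Sauer--Shelah, $M\le s^{d}=O((1/\gamma)^{2d})$, which sits inside the target bound $c(1/\gamma)^{2d+1}$ with enough slack for the equitization step. For any two classes $Q_{i},Q_{j}$ with common patterns $P_{i},P_{j}\subseteq T$, every $u\in Q_{i}$ satisfies $|N(u)\cap Q_{j}|/|Q_{j}|=|P_{i}\cap Q_{j}|/|Q_{j}|\pm O(\epsilon)$ by applying the $\epsilon$-approximation property to the range $N(u)$ with sample $T\cap Q_{j}$. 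Thus the cross-density $d(Q_{i},Q_{j})$ is pinned, up to $O(\gamma)$, to the quantized value $|P_{i}\cap Q_{j}|/|Q_{j}|$.

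Second, I would show that all but a $\gamma$-fraction of pairs $(Q_{i},Q_{j})$ are $\gamma$-homogeneous. For each fixed $Q_{j}$, the traces $\{P_{i}\cap (T\cap Q_{j}):i\in[M]\}$ form a set system of VC-dimension at most $d$, so by Sauer--Shelah there are only $O(|T\cap Q_{j}|^{d})$ distinct such traces. An averaging argument, combined with a counting of ``middle-density'' witnesses (each such witness structure, if too numerous, would let one shatter a $(d+1)$-set and violate the VC-dimension), forces the classes whose pattern on $T\cap Q_{j}$ lies in $(\gamma,1-\gamma)$ to contribute only a $\gamma$-fraction of pairs after summing over $j$. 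To finally equitize, I would subdivide each $Q_{i}$ into pieces of common size $t=\lfloor n/K\rfloor$ for a $K$ that is a multiple of $M$ satisfying $K\ge 8/\gamma$, distributing the $O(M)$ leftover vertices among existing pieces; since subdividing cannot destroy homogeneity of a cross pair, the fraction of inhomogeneous pairs at most doubles, which can be absorbed by a mild shrinkage of $c_{1}$.

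The main obstacle is the homogeneity step: showing that the quantized densities cluster near $0$ or $1$ for almost all $(i,j)$, rather than being merely $\gamma$-controlled as in classical Szemer\'edi regularity. This is where having VC-dimension $d$ — rather than just a polynomial trace function — is essential, because the Sauer--Shelah bound applied iteratively to both coordinates is what ultimately forces most pattern-pairs into extremal positions, yielding genuine homogeneity and matching the $(1/\gamma)^{2d+1}$ exponent stated in the theorem.
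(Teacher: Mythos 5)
First, note that the paper does not prove this statement at all: it is quoted from~\cite{2019DCGVC} (Fox--Pach--Suk) and used as a black box in the proof of Theorem~\ref{thm:VCDim1}. So your proposal is being judged on its own, not against an argument in the text.

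Your toolkit (Sauer--Shelah, Haussler's packing lemma, VC $\varepsilon$-approximations) is the right one, but the proposal has a genuine gap at exactly the point you flag as ``the main obstacle'', and it is not repaired. Pinning each cross-density $d(Q_i,Q_j)$ to the quantized value $|P_i\cap Q_j|/|Q_j|$ up to $O(\gamma)$ does nothing towards homogeneity: that quantized value can sit anywhere in $[0,1]$, and bounded VC-dimension does not forbid this (a half-graph has VC-dimension $\le 2$ and its pattern classes can have density $1/2$ against each other). The sentence ``each such witness structure, if too numerous, would let one shatter a $(d+1)$-set'' is precisely the theorem's content, and as written it is an assertion, not an argument. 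What is actually needed is a quantitative statement of the form: if $(A,B)$ is a pair in which a positive fraction of the vertices of $A$ have degree into $B$ in the range $(\gamma|B|,(1-\gamma)|B|)$ \emph{and} their neighborhoods restricted to $B$ are pairwise far apart, then the restricted system $\{N(u)\cap B: u\in A\}$ violates Haussler's packing bound (or Sauer--Shelah) for VC-dimension $d$. This forces a second, separate application of the packing lemma at a different scale (separation measured inside $B$, not inside $V$), and the two scales are what produce the exponent $2d+1$ rather than $d$. Nothing in your write-up supplies this step.

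There is also a technical error in the density-pinning step itself. An $\varepsilon$-approximation $T$ for the neighborhood system on the ground set $V$ guarantees $\bigl||N(u)\cap T|/|T|-|N(u)|/n\bigr|\le\varepsilon$; it does \emph{not} guarantee that $T\cap Q_j$ is an $\varepsilon$-approximation for the subuniverse $Q_j$. The error you inherit is additive relative to $n$, not relative to $|Q_j|$, and the pattern classes $Q_j$ produced by $u\sim v\iff N(u)\cap T=N(v)\cap T$ can be far smaller than $\varepsilon n$ (or empty of sample points), in which case the claim $|N(u)\cap Q_j|/|Q_j|=|P_i\cap Q_j|/|Q_j|\pm O(\varepsilon)$ fails outright. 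The standard fix (as in Fox--Pach--Suk) is to work with a maximal $\delta n$-separated family of neighborhoods via Haussler's packing lemma (as this paper itself does in the proof of Proposition~\ref{thm:GeneralBVC}), discard or absorb the small classes into the exceptional set, and only then run the homogeneity argument on pairs of large parts. As it stands, your proposal establishes a polynomial-size partition with approximately constant cross-densities, which is strictly weaker than the homogeneous partition claimed in the theorem.
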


\begin{proof}[Proof of Theorem \ref{thm:VCDim1}]
We first argue that there are graphs with VC-dimension 2 but have no $B_2$ property. Consider a random balanced bipartite $C_4$-free graph with $\Theta(n^{4/3})$ edges. Then its complement has $(\frac{1}{4}-o(1))n^2$ edges, VC dimension at most 2 and with high probability no complete biclique of size $n^{2/3+o(1)}$.

Let $G$ be an $n$-vertex graph with VC-dimension $1$ and $e(G)=c\binom{n}{2}$ with $c>0$. It is left to prove that $G$ contains a $K_2[t]$ with $t=\Omega_c(n)$. We consider two cases.

\medskip

\noindent\textbf{Case 1. $c \ge \frac{99}{100}.$} 

\begin{claim}\label{claim:Dense}
    $G$ contains a subgraph $G'$ with $|G'|\ge \frac{9n}{10}$ and $\delta(G')\ge\frac{2n}{3}$.    
\end{claim}
\begin{poc}
    We iteratively define a sequence of graphs $G_{0}=G\supseteq G_{1}\supseteq G_{2}\supseteq\cdots$, where for each $i\ge 1$, we obtain $G_{i}$ by removing one vertex with degree less than $\frac{2n}{3}$ from $G_{i-1}$ if it exists, otherwise we stop. Note that the number of non-adjacent pair of vertices in $G$ is at most $\frac{1}{100} \binom{n}{2}$ and each removal decreases the number of non-edges by at least $\frac{n}{3}.$ Hence, the process will stop after fewer than $\frac{3}{100} n$ steps. Let $G'$ be the resulting subgraph. It follows that $\delta(G')\ge\frac{2n}{3}$ and $|V(G')|\ge \frac{9}{10} n$.  
\end{poc}

The following claim implies that the graph $G'$ is a complete multipartite graph.

 \begin{claim}\label{claim:sameNeighbor}
   For any pair of non-adjacent vertices $u,v\in V(G')$, $N_{G'}(u)=N_{G'}(v)$.
 \end{claim}
 \begin{poc}
     Let $u,v\in V(G')$ be a pair of vertices such that $uv\notin E(G')$. Suppose that there exists a vertex $w$ such that $uw\in E(G')$ and $vw\notin E(G')$, note that $u$ and $w$ have a common neighbor $z\in V(G')$ as $\delta(G')\ge\frac{2n}{3}$. That means $v$ can be adjacent to at most one vertex of the triangle $uwz$, a contradiction to~\cref{Claim:ForbiddenGraphsVC1}.
 \end{poc}
Since $G'$ is a complete multipartite graph and $\delta(G') \ge \frac{2n}{3},$ it follows that each of the parts has size at most $\frac{n}{3}.$ Since $|V(G')| \ge \frac{9n}{10},$ the parts of $G'$ can be split into two sides such that each of the sides has size at least $\frac{n}{5},$ implying that $G$ contains $K_2[\frac{n}{5}]$ as desired.

\medskip

\noindent\textbf{Case 2. $c < \frac{99}{100}$.}
 
Let $\gamma=10^{-5}$. By~\cref{thm:VCdimRegularity}, we can partition $V(G)$ into $V_{1}\cup V_{2}\cup\cdots\cup V_{K}$ with $K\le c'\big(\frac{1}{\gamma}\big)^{3}$ for some $c'>0$, such that all but a $\gamma$-fraction of the pairs of parts are $\gamma$-homogeneous. Since $c \binom{n}{2} \le e(G) \le \frac{99}{100}\binom{n}{2},$ it is easy to see that there are three parts $V_{i},V_{j},V_{k}$ such that $(V_{i},V_{j})$ is $\gamma$-dense, $(V_{j},V_{k})$ is $\gamma$-sparse and $|V_i| = |V_j| = |V_k| = m \ge \frac{n}{2K}$. Similarly as in~\cref{claim:Dense}, we remove any vertex from $V_{j}$ with at most $\frac{4m}{5}$ neigbours in $V_i$ or at least $\frac{m}{5}$ neighbors in $V_k.$ Let $U_j \subseteq V_j$ be the remaining set and observe that $|U_j| \ge (1 - 10\gamma) m$ and for every $u \in U_j,$ we have $d_{V_i}(u) \ge \frac{4m}{5}$ and $d_{V_k}(u) \le \frac{m}{5}.$

Note that there are at least $\frac{4m}{5} |U_j| \ge \frac{3m^2}{4}$ edges between $U_j$ and $V_i$ so we can pick a vertex $v \in V_i$ with $|N_{U_{j}}(v)|\ge \frac{3m}{4}$. We first claim that $N_{U_{j}}(v)$ is an independent set. Indeed, if there is an edge $u_{1}u_{2}$ in $N_{U_j}(v)$, then every vertex in $V_{k}$ should be adjacent to at least one of $u_{1},u_{2}$ by~\cref{Claim:ForbiddenGraphsVC1}~(1). However, then one of $u_1, u_2$ has at least $\frac{m}{2}$ neighbors in $V_k,$ a contradiction.

Moreover, we have the following claim.
\begin{claim}
    For any distinct vertices $u_{1},u_{2}\in N_{U_{j}}(v)$, either $N_{V_{i}}(u_{1})\subseteq N_{V_{i}}(u_{2})$, or $N_{V_{i}}(u_{2})\subseteq N_{V_{i}}(u_{1})$. 
\end{claim}
\begin{poc}
Suppose this is not the case so there is a vertex $v_{1}\in N_{V_{i}}(u_{1})\setminus N_{V_{i}}(u_{2})$ and a vertex $v_{2}\in N_{V_{i}}(u_{2})\setminus N_{V_{i}}(u_{1})$. Then $v_{1}u_{1}vu_{2}v_{2}$ forms a copy of $P_{5}$. However, none of the edges $v_{1}u_{2},u_{1}u_{2},u_{1}v_{2}$ are present, contradicting~\cref{Claim:ForbiddenGraphsVC1}~(2).
\end{poc}
Now, consider a vertex $u \in N_{U_{j}}(v)$ minimizing $|N_{V_i}(u)|$. Then, for any $u' \in N_{U_{j}}(v)$, we have $N_{V_i}(u') \supseteq N_{V_i}(u)$ implying the existence of $K_2[\frac{3m}{4}]$ in $G$.

This completes the proof of Theorem \ref{thm:VCDim1}.
\end{proof}

\section{Concluding remarks}\label{sec:conclusion}
Recall that $g(k)$ is the minimum $r\in \I N$ such that if any $n$-vertex ordered graph $G$ with no induced monotone $P_{2k}$ satisfies $\varrho_r(\overline{G})>0$, then $\overline{G}$ contains a copy of $K_2[\Omega(\frac{n}{\log n})]$. We determined $g(k)$ up to a factor of $2$. It would be interesting to close this gap. To this end, we make the following conjecture on the Ramsey problem for $f(k)$ defined in~\cref{def:dependency-digraph}.

\begin{conjecture}\label{fk}
$f(k)=\floor{\frac{k^{2}}{4}} +1$.
\end{conjecture}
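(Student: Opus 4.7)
I will attempt to prove the conjecture in two parts. The lower bound $f(k) \ge \floor{k^2/4} + 1$ should follow from an explicit ``grid coloring'' analogous to the geometric construction in \cref{section:ConstructionBEGraph}, while the matching upper bound is expected to be the main challenge.

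For the lower bound, set $s = \floor{k/2}$ and $t = \ceil{k/2}$, and arrange $N = st = \floor{k^2/4}$ vertices as $\{v_{i,j}\}_{i \in [s], j \in [t]}$, totally ordered lexicographically ($v_{i,j} < v_{i',j'}$ iff $i < i'$ or $i = i', j < j'$). Color $v_{i,j} v_{i',j'}$ red if $i = i'$ (same row) and blue otherwise. For any $k$-subset $S$, pigeonhole on $|S| = k > s + t - 1$ forces at least two rows $p < q$ to each contain $\ge 2$ elements of $S$. The two smallest $S$-elements of row $p$ (resp.\ row $q$) are automatically $S$-consecutive, because the lex order places all in-between main-order elements inside the same row, and those are excluded by minimality. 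This produces two disjoint red $S$-consecutive pairs whose four cross-edges are all blue, so the dependency digraph $D(\chi|_S)$ contains both directions of a $2$-cycle between the endpoints of the two pairs. Hence $\chi|_S$ is non-admissible for every $k$-subset $S$, proving $f(k) > \floor{k^2/4}$.

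For the upper bound, the paper's proof assigns $\sigma(v) \in [k-1]$ (the length of the longest blue monotone path starting at $v$, assuming no blue $P_k$) and bounds $|\sigma^{-1}(t)| \le k - t$, yielding $\binom{k}{2}$. My plan is to refine this via the two-parameter classification $L_{s,t} = \{v : \sigma(v) = s,\ \tau(v) = t\}$, where $\tau(v)$ denotes the length of the longest blue monotone path ending at $v$ (note $\sigma(v) + \tau(v) \le k$). A generalization of the paper's extraction lemma shows that $|L_{s,t}| + (s-1) + (t-1) \ge k$ already suffices to find an admissible $k$-subset: concatenate $|L_{s,t}|$ vertices from $L_{s,t}$ (all pairwise-red) with the forward blue path of length $s$ anchored at the largest element and the backward blue path of length $t$ anchored at the smallest, and verify that the resulting dependency digraph has edges only from the ``path'' vertices to the $L_{s,t}$ vertices, hence is acyclic. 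Pooling this with cross-class extractions---using multiple $L_{s,t}$'s interleaved in order---should pin down the total count at $\floor{k^2/4}$.

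The main obstacle is that single-class extraction alone gives only $\sum_{s+t \le k}(k + 1 - s - t) = \binom{k+1}{3}$, which is worse than $\binom{k}{2}$; matching the grid construction therefore requires a genuinely multi-class analysis or a stability-style classification. A further difficulty is that the dependency digraph can contain cycles of length $\ge 3$ without any $2$-cycle, so the obstruction to admissibility is not captured by a single local pattern, and both short and long cycles must be controlled simultaneously. The approach I would push hardest is a stability argument: show that any coloring on more than $\floor{k^2/4}$ vertices with no admissible $k$-subset must resemble the grid construction (after a controlled rearrangement of blue/red classes), from which adding one extra vertex breaks the structure and produces an admissible $k$-subset. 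Executing this step robustly is where I expect the bulk of the technical work to lie.
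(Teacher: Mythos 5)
This statement is Conjecture~A, which the paper leaves \emph{open}: it only verifies it by computer for $k\le 6$, proves the weaker upper bound $f(k)\le\frac{k^2-k+2}{2}$, and obtains the matching lower bound $f(k)\ge\lfloor k^2/4\rfloor+1$ only \emph{indirectly} by combining $g(k)\le f(k)$ (\cref{thm:paths-embedding}) with the geometric construction of \cref{section:ConstructionBEGraph}. So there is no paper proof to compare against.

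Your lower-bound argument is correct and is a genuinely different (and cleaner) route to $f(k)>\lfloor k^2/4\rfloor$ than the paper makes explicit. The steps check out: with $N=\lfloor k^2/4\rfloor=st$ points in an $s\times t$ grid ($s=\lfloor k/2\rfloor$, $t=\lceil k/2\rceil$) under the lexicographic order, any $k$-set $S$ must meet at least two rows in $\ge 2$ points, since otherwise $|S|\le t+(s-1)=k-1$; the two smallest $S$-elements of such a row are automatically $S$-consecutive because everything between them in the global order lies in the same row; and these two red $S$-consecutive pairs, together with the four blue cross-edges, force a bidirected pair of arcs $(w_a,w_b)$ and $(w_b,w_a)$ in $D(\chi|_S)$ via the two choices $(i,j)=(a,b)$ and $(i,j)=(b,a)$ in \cref{def:dependency-digraph}. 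This is a self-contained elementary proof of the lower bound that avoids the sphere construction and the regularity-based embedding entirely; it would be worth stating as a standalone proposition.

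The upper bound, however, is not proved, and you correctly diagnose the obstacle. Even granting the single-class extraction you describe, bounding each $|L_{s,t}|$ by $k+1-s-t$ yields only $\sum_{s+t\le k}(k+1-s-t)=\binom{k+1}{3}$, which is asymptotically \emph{worse} than the paper's $\binom{k}{2}+1=\frac{k^2-k+2}{2}$; the ``multi-class pooling'' and stability steps are not carried out and are precisely where the difficulty lives. In particular, as you note, $D(\chi)$ can fail to be acyclic via long cycles that contain no $2$-cycle, so the grid construction captures only one obstruction mode, and any matching upper bound must handle all of them. That is exactly why this remains an open conjecture in the paper.
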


By our construction in~\cref{section:ConstructionBEGraph}, the conjecture above, if true, would be optimal, implying that $g(k)=f(k)=\floor{\frac{k^{2}}{4}} +1$. With the assistance of computers, we checked that \cref{fk} is true for $k\le 6$.


An important theme we systematically investigate in this paper is that of ``replacing edge density by clique density''. We believe that this theme merits further study. A recent result of Holmsen~\cite{2020Holmsen} is of this flavor. It is known that for any intersection graph $G$ of axis-aligned boxes in $\mathbb{R}^d$, if $G$ has edge density larger than $1-\frac{1}{d}$, then it contains a linear-size clique. A corollary of Holmsen's result shows that it suffices to require positive $K_{d+1}$-density.

Tomon~\cite{2022DCGSharpString} proved that if $G$ is the complement of a string graph and has edge density larger than $\frac{3}{4}$, then $G$ contains a linear-size biclique. Following our theme here, we propose the following conjecture. 

\begin{conjecture}
Let $\eps>0$ and $G$ be the complement of a string graph. If $\varrho_{4s+1}(G)>0$, then $G$ contains a $K_{s+1}[\delta n]$, where $\delta=\delta(\eps)>0$.
\end{conjecture}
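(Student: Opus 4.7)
The plan is to mirror the $B_2 \Rightarrow B_k$ framework of~\cref{thm:B2ImpliesBk}, with the generic $B_2$ property replaced by a ``threshold" version that complements of string graphs inherit from Tomon's theorem. Formally, say a hereditary family $\mathcal{G}$ satisfies the \emph{$B_2^{\tau}$ property} if for every $\varepsilon > 0$, any $G \in \mathcal{G}$ with $\varrho_2(G) > \tau + \varepsilon$ contains a biclique of linear size. Tomon's theorem~\cite{2022DCGSharpString} says the family of complements of string graphs satisfies $B_2^{3/4}$, and one would aim to prove the following quantitative upgrade in the style of~\cref{thm:B2ImpliesBk}: if a hereditary family has $B_2^{3/4}$, then $\varrho_{4k-3}(G)>\varepsilon$ implies $G$ contains $K_k[f(\varepsilon)n]$. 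Applied with $k=s+1$ this gives the conjecture. Note that for $s=1$ the target $\varrho_5(G) > \varepsilon \Rightarrow K_2[\delta n]$ is already a strict strengthening of Tomon, since supersaturation gives $\varrho_2(G) > 3/4 + \varepsilon \Rightarrow \varrho_5(G) > \varepsilon'$ but not conversely.

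The first step is standard. Apply~\cref{lem:psrl} and~\cref{lem:clique-in-reduced-graph} to the assumption $\varrho_{4s+1}(G) > \varepsilon$ to obtain $4s+1$ pairwise $\eta$-regular clusters $V_1, \dots, V_{4s+1}$ of size $\Omega_\varepsilon(n)$ with every pairwise density in $G$ at least $\varepsilon/2$.

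The second step is to grow the $K_{s+1}[\delta n]$-blowup inductively, using four clusters per iteration. Start from a trivial $K_1$-blowup inside $V_{4s+1}$. To extend an existing $K_j$-blowup to a $K_{j+1}$-blowup, process the next four unused clusters $V_a, V_b, V_c, V_d$, imitating the case split in the proof of~\cref{thm:B2ImpliesBk}. Either some linear subset of $V_a \cup V_b \cup V_c \cup V_d$ has edge density strictly greater than $3/4$ (``locally dense"), in which case $B_2^{3/4}$ produces a biclique $(A,B)$ inside $V_a \cup \cdots \cup V_d$; or every linear subset is below $3/4$ (``locally sparse"), whence pairs of sparse subsets can be combined across the four clusters to force density exceeding $3/4$ on a linear pair, again invoking $B_2^{3/4}$. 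In either sub-case one extracts two linear subsets forming a new biclique part, which is then married to the existing blowup via the slicing lemma and regularity, losing only a bounded factor in size.

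The main obstacle is the ``density inflation'' needed to cross the $3/4$ threshold at every step. Unlike in the proof of~\cref{thm:B2ImpliesBk}, where positive density was sufficient to invoke $B_2$, here each iteration must produce a subpair of density strictly greater than $3/4$; the $1/4$ gap between $\varepsilon/2$ and $3/4$ is precisely what forces spending four clusters per iteration, explaining the count $4s+1$. A secondary obstacle is showing that $4s+1$ is sharp, which would likely require an iterated version of the tight construction for Tomon's $3/4$ threshold, arranged so as to simultaneously block $K_{s+1}$-blowups while retaining positive $K_{4s}$-density.
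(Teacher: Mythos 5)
This statement is posed as an open conjecture in the paper's concluding remarks; the authors give no proof of it (they only observe, via disjoint unions of Tomon's tight examples, that the constant $4s+1$ would be optimal). So your proposal cannot be matched against a proof in the paper, and on its own terms it does not close the conjecture: it is an outline whose decisive step is missing.

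The gap is exactly the one you flag as ``the main obstacle'' and then do not resolve. In the proof of \cref{thm:B2ImpliesBk}, the inductive step only ever needs to exhibit a linear pair of \emph{positive} density, which regularity hands you for free; that is why the plain $B_2$ property suffices there. Your threshold variant $B_2^{3/4}$ requires, at every iteration, a linear vertex subset of density \emph{strictly greater than} $3/4$, and your claim that ``pairs of sparse subsets can be combined across the four clusters to force density exceeding $3/4$ on a linear pair'' fails by averaging. If $A$ and $B$ are equal-sized internally sparse sets, the density of $G[A\cup B]$ is at most roughly $\tfrac{1}{2}d(A,B)+o(1)\le \tfrac12$; more generally, the union of $j$ internally sparse clusters has density at most about $1-\tfrac{1}{j}$ even when every cross pair is complete, so four clusters reach $3/4$ only in the limit and never exceed it --- and in your setting the cross densities are merely $\varepsilon/2$, not close to $1$. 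Hence no sub-pair of density $>3/4$ can be manufactured from the regularity clusters, $B_2^{3/4}$ is never triggered, and the induction does not start. (Indeed, a graph in which all clusters are internally empty and all cross pairs are regular of density $\varepsilon/2$ has positive $K_{4s+1}$-density, so the hypothesis of the conjecture does not rule out exactly the configuration your argument cannot handle.) Any genuine proof must use structural properties of string-graph complements beyond the black-box threshold Ramsey statement, or a substantially different mechanism for converting clique density into a high-density linear pair; as written, the proposal restates the difficulty rather than overcoming it.
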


We remark that taking disjoint union of tight examples in~\cite{2022DCGSharpString} shows that if the conjecture is true, it is optimal in the sense that positive $K_{4s}$-density does not suffices to guarantee linear-size blowup of $K_{s+1}$.

In~\cref{thm:VCDim1}, we show that linear-size biclique is guaranteed only in dense graphs with VC dimension 1. How large a biclique do dense graphs with larger VC-dimension contain? Anthony, Brightwell and Cooper~\cite{1995DMThresholdBVC} proved that the random graph $G(n, p)$ with edge probability $p = 1 - n^{-\frac{1}{d}+O(\frac{1}{d^{2}})}$, with high probability, has VC-dimension at most $d$ and is $K_2[t]$-free, where $t=n^{\frac{1}{d}+O(\frac{1}{d^2})}$. Hence, there exist graphs with $(1 - o(1)) \binom{n}{2}$ edges, VC-dimension at most $d$ and no biclique of size $n^{\frac{1}{d}+O(\frac{1}{d^2})}$. On the other hand, we can show that any dense graph with VC-dimension $d$ contains a biclique of size $\Omega(n^{\frac{1}{d+1}})$.

\begin{prop}\label{thm:GeneralBVC}
    Let $d\ge 2$ be positive integers and $G$ be an $n$-vertex graph with VC-dimension $d$. If $e(G)\ge c\binom{n}{2}$ for some $c>0$, then $G$ contains a copy of $K_{2}[c'n^{\frac{1}{d+1}}]$ for some $c'$.
\end{prop}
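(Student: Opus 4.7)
The plan is to combine a simple averaging step with the Sauer--Shelah lemma applied to the neighborhood set system. Writing $c > 0$ for the given constant, a standard double count shows that the set $H := \{v \in V(G) : d(v) \ge cn/4\}$ has size $|H| \ge cn/4$. I would fix a parameter $k$ (ultimately chosen as $k \asymp n^{1/(d+1)}$) and take $S$ to be any subset of $H$ with $|S| = k$. Since $\sum_{v \in V(G)} |N(v) \cap S| = \sum_{s \in S} d(s) \ge k \cdot cn/4$, a straightforward averaging over $v \in V(G) \setminus S$ yields that the set $B := \{v \in V(G) \setminus S : |N(v) \cap S| \ge ck/16\}$ has $|B| = \Omega(n)$ (as long as $k$ is much smaller than $n$).

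The key step is to use the fact that the trace system $\mathcal{F}_S := \{N(v) \cap S : v \in V(G)\}$ on ground set $S$ has VC-dimension at most $d$; indeed, any subset $T \subseteq S$ shattered by $\mathcal{F}_S$ would also be shattered by $\{N(v) : v \in V(G)\}$. By the Sauer--Shelah lemma, $|\mathcal{F}_S| \le \sum_{i=0}^{d}\binom{k}{i} = O(k^d)$. Restricting to vertices of $B$, each of whose trace has size at least $ck/16$, pigeonhole produces a single pattern $P \subseteq S$ with $|P| \ge ck/16$ such that $U := \{v \in B : N(v) \cap S = P\}$ satisfies $|U| \ge |B|/O(k^d) = \Omega(n/k^d)$. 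Since $U \subseteq V(G) \setminus S$ and $P \subseteq S$, the sets $U$ and $P$ are disjoint, and by construction every vertex of $U$ is adjacent to every vertex of $P$. Hence $U \cup P$ spans a copy of $K_{|U|,|P|}$ in $G$.

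To conclude, I would balance the two side lengths $|U| \asymp n/k^d$ and $|P| \asymp k$ by taking $k := \lfloor n^{1/(d+1)} \rfloor$, which yields $\min(|U|,|P|) = \Omega(n^{1/(d+1)})$ and hence a copy of $K_2\bigl[c' n^{1/(d+1)}\bigr]$ for some constant $c' = c'(c,d) > 0$. I do not see a serious obstacle here: the only thing requiring care is the bookkeeping of constants so that (i) the averaging genuinely produces $|B| = \Omega(n)$ after excising the $k$ vertices of $S$, and (ii) the Sauer--Shelah bound $\sum_{i \le d}\binom{k}{i}$ is absorbed into the final constant. Note also that for $d = 1$ this strategy recovers only $K_2[\Omega(n^{1/2})]$, which is much weaker than the linear bound of Theorem~\ref{thm:VCDim1}, consistent with the fact that the argument does not exploit the triangle/$P_5$ structure special to the $d = 1$ case.
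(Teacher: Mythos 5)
Your proof is correct, and it takes a genuinely different route from the paper. The paper's argument is built on Haussler's packing lemma (\cref{lemma:Packing}): it fixes a minimum degree, takes a maximal $s$-separated family of neighborhoods with $s \asymp n^{1-1/(d+1)}$, groups vertices by which representative neighborhood is within symmetric-difference $s$, and then intersects the neighborhoods of $q \asymp n^{1/(d+1)}$ vertices lying in a large group, using the minimum degree and the small pairwise symmetric differences to show the common neighborhood is still large. Your argument instead fixes a small set $S$ of $k \asymp n^{1/(d+1)}$ high-degree vertices, observes that the trace system $\{N(v)\cap S\}$ has VC-dimension at most $d$, and invokes Sauer--Shelah to bound the number of trace patterns on $S$ by $O(k^d)$; the double count then forces $\Omega(n)$ vertices to have a large trace, and pigeonhole over the $O(k^d)$ patterns forces $\Omega(n/k^d)$ of them to share a single pattern $P$ of size $\Omega(k)$, giving the biclique $(U,P)$ directly. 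Both approaches yield exactly the $n^{1/(d+1)}$ exponent, but yours replaces Haussler's packing lemma (a deeper result) with the more elementary Sauer--Shelah lemma, and sidesteps the iterated intersection bookkeeping; in exchange, the paper's version is phrased more symmetrically in the symmetric-difference metric. Your closing remark is also apt: like the paper's proof, this argument for general $d$ does not exploit the local structure ($P_5$ and triangle constraints) that powers the linear bound when $d=1$, so the weaker $n^{1/2}$ consequence for $d=1$ is expected, not a flaw.
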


A system $\C X\subseteq 2^V$ is \emph{$s$-separated} if for any $F,F'\in\C X$, $|F\triangle F'|\ge s$.

\begin{lemma}[Haussler~\cite{1995PackingLemma}]\label{lemma:Packing}
     Let $\mathcal{F}\subseteq 2^{V}$ be a set system such that $\mathcal{F}$ has VC-dimension $d$. If $\mathcal{X}\subseteq \mathcal{F}$ is $s$-separated, then $|\mathcal{X}|\le c_{1}(\frac{|V|}{s})^{d}$, where $c_{1}=c_{1}(d)$.
\end{lemma}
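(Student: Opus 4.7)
The plan is to combine Haussler's packing lemma (Lemma~\ref{lemma:Packing}) with a two-step pigeonhole argument on neighborhoods. First, since $e(G) \ge c\binom{n}{2}$, a standard averaging argument yields a set $V_D$ of at least $\frac{c}{4}n$ vertices, each of degree at least $D := \frac{c}{4}n$. The neighborhood system $\mathcal{F} = \{N(v) : v \in V_D\} \subseteq 2^{V(G)}$ inherits VC-dimension at most $d$ from $G$, so it is the right setting in which to apply the packing lemma.

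Next, I would fix a threshold $s := \alpha\, n^{d/(d+1)}$ for a small constant $\alpha = \alpha(c,d) > 0$ and take a maximal $s$-separated subfamily $\mathcal{X} \subseteq \mathcal{F}$. By Lemma~\ref{lemma:Packing}, $|\mathcal{X}| \le c_1 (n/s)^d$, and by maximality every $v \in V_D$ is assigned to some ``center'' $v_0 \in \mathcal{X}$ with $|N(v) \triangle N(v_0)| < s$. Pigeonholing $V_D$ over the at most $c_1(n/s)^d$ centers produces a single center $v_0$ and a class $C \subseteq V_D$ of size at least $|V_D|/|\mathcal{X}| = \Omega(s^d/n^{d-1})$, all of whose neighborhoods are within symmetric difference $s$ of $B_0 := N(v_0)$. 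In particular, for every $v \in C$ we have $|B_0 \setminus N(v)| < s$, i.e., $v$ is adjacent to all but fewer than $s$ vertices of $B_0$.

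Finally, set $t := c'\, n^{1/(d+1)}$ for an appropriately small constant $c' > 0$, pick any $t$-subset $S \subseteq C$ (possible because $|C| = \Omega(s^d/n^{d-1}) = \Omega(n^{1/(d+1)})$ with our choice of $s$), and let $M := \{ b \in B_0 : vb \notin E(G) \text{ for some } v \in S\}$. Then $|M| \le t s$, and with the balanced choice $s \asymp n^{d/(d+1)}$ we get $ts = O(n)$, which with constants tuned can be kept at most $|B_0|/2 \le \frac{c}{8}n$. Hence $B_0 \setminus M$ has size at least $t$, and any $t$-subset $T \subseteq B_0 \setminus M$ gives, together with $S$, a copy of $K_2[t]$ as desired. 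The only real subtlety in the argument is the parameter balancing: the pigeonhole lower bound forces $t \le s^d/n^{d-1}$ while the missed-vertex bound forces $t \le cn/(8s)$, and equating these leads to $s \asymp n^{d/(d+1)}$ and $t \asymp n^{1/(d+1)}$, which is exactly the exponent in the statement.
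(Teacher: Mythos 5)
Your proposal does not prove the stated lemma. The statement to be proven is Haussler's packing lemma itself: for a set system $\mathcal{F}\subseteq 2^{V}$ of VC-dimension $d$, any $s$-separated subfamily $\mathcal{X}\subseteq\mathcal{F}$ has size at most $c_1(|V|/s)^d$. What you have written is instead a proof of the downstream application (the paper's Proposition~\ref{thm:GeneralBVC}, that a dense $n$-vertex graph of VC-dimension $d$ contains $K_2[\Omega(n^{1/(d+1)})]$), and your very first step invokes Lemma~\ref{lemma:Packing} as a black box. As a proof of the packing lemma this is circular; as a proof of the biclique proposition it is addressed to the wrong statement. In the paper the packing lemma is an external citation to Haussler's 1995 result and is not proved there, but that does not make an argument that assumes the lemma an acceptable substitute for proving it.

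A genuine proof of the stated bound has to engage with the set system directly, with no graph in sight. The standard routes are: (i) Haussler's probabilistic projection argument, where one samples a random subset $W\subseteq V$ of size about $d|V|/s$, argues that two $s$-separated sets have distinct traces on $W$ with probability bounded away from zero (so the expected number of distinct traces on $W$ is $\Omega(|\mathcal{X}|)$), and then bounds the number of traces on $W$ by the Sauer--Shelah lemma, giving $|\mathcal{X}|=O((d|V|/s)^d)$; or (ii) the ``unit-distance graph'' refinement, which shows the $1$-inclusion graph of a system of VC-dimension $d$ has average degree at most $2d$ and deduces the sharper constant. Neither ingredient (random projection, Sauer--Shelah, or the $1$-inclusion-graph degree bound) appears anywhere in your write-up, so the key content of the lemma is entirely missing. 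Separately, if your intent was to prove the biclique proposition, your argument there is essentially the paper's own proof of Proposition~\ref{thm:GeneralBVC} (maximal $s$-separated subfamily, pigeonhole into clusters of pairwise-close neighborhoods, then extract a common neighborhood), and the parameter balancing $s\asymp n^{d/(d+1)}$, $t\asymp n^{1/(d+1)}$ is correct for that purpose --- but that is not the statement you were asked to prove.
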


\begin{proof}[Proof of~\cref{thm:GeneralBVC}]
Let $G$ be an $n$-vertex graph with VC-dimension $d$ and $e(G)\ge c\binom{n}{2}$. Deleting low degree vertices, we get from $G$ a subgraph $G'$ with $\delta(G')\ge cn/2$. Let $\mathcal{F}':=\{N_{G'}(v)\subseteq V(G'): v\in V(G')\}$, the VC-dimension of $\mathcal{F}'$ is also at most $d$.

Set $s=|G'|^{1-\frac{1}{d+1}}$ and take a maximal $s$-separated subfamily $\mathcal{X}=\{{N_{G'}(v_{1}),N_{G'}(v_{2}),\ldots,N_{G'}(v_{m})}\}\subseteq \C F'$. We have $m\le c_{1}(\frac{|G'|}{s})^{d}$ by~\cref{lemma:Packing}. Then, partition $V(G')$ into $V_{1}\sqcup V_{2}\sqcup\cdots\sqcup V_{m}$ with $v_{i}\in V_{i}$ as follows. We put $N_{G'}(v)\in\mathcal{F'}\setminus \mathcal{X}$ into $V_{i}$ if $i$ is the smallest index such that $|N_{G'}(v)\triangle N_{G'}(v_{i})|\le s$. Note that after partitioning, each set in $\mathcal{F'}\setminus \mathcal{X}$ belongs to some part $V_{i}$. Moreover, every pair of vertices $u,v$ with their neighborhoods in the same part satisfies $|N_{G'}(u)\triangle N_{G'}(v)|\le 2s$ by the triangle inequality.

Now pick arbitrary vertices $v_{1},v_{2},\ldots,v_{q}$ from some part $V_{i}$ with $|V_{i}|=\Omega(\frac{n}{m})$, where $q=\Omega(n^{\frac{1}{d+1}})$. As $\delta(G')> (2q+1)s+q$, we see that $\big|\bigcap_{h\in [q]}N_{G'}(v_{h})\big|\ge |N_{G'}(v_{1})|-(2q+1)s\ge q$.
Note that $|V_{i}|\ge \Omega(n^{1-\frac{1}{d+1}})>q$, thus there is a biclique of size $q$.
\end{proof}

\bibliographystyle{abbrv}
\bibliography{2.bib}

\end{document}